\newtheorem{theorem}{Theorem}
\newtheorem{lemma}[theorem]{Lemma}
\newtheorem{remark}[theorem]{Remark}
\newtheorem{proposition}[theorem]{Proposition}
\newcommand{\R}{\mathbb{R}}
\newcommand{\Z}{\mathbb{Z}}
\newcommand{\N}{\mathbb{N}}
\newcommand{\eps}{\varepsilon}
\newcommand{\dis}{\displaystyle}
\newcommand{\Th}{\mathcal{T}_H}
\newcommand{\MsFEMA}{Adv-MsFEM}
\newcommand{\MsFEMB}{MsFEM}
\newcommand{\MsFEMBSUPG}{Stab-MsFEM}
\newcommand{\VA}{V_H^{\eps,Adv}}
\newcommand{\VB}{V_H^{\eps}}
\newcommand{\VBn}{V_{H_n}^{\eps}}
\newcommand{\VBh}{V^{\eps}_{H,h}}
\newcommand*{\medcap}{\mathbin{\scalebox{1.15}{\ensuremath{\cap}}}}
\begin{document}
\date{\today}
\author{Claude Le Bris, Frédéric Legoll and François Madiot
\\
{\footnotesize \'Ecole des Ponts \& INRIA}\\
{\footnotesize 6 et 8 avenue Blaise Pascal, 77455 Marne-La-Vall\'ee Cedex 2, France}\\
{\footnotesize \tt \{lebris,madiotf\}@cermics.enpc.fr}\\
{\footnotesize \tt legoll@lami.enpc.fr}
}
\title{A numerical comparison of some \\Multiscale Finite Element approaches for convection-dominated problems in heterogeneous media}
\maketitle


\begin{abstract}
The purpose of this work is to investigate the behavior of Multiscale Finite Element type methods for advection-diffusion problems in the advection-dominated regime. We present, study and compare various options to address the issue of the simultaneous presence of both heterogeneity of scales and strong advection. Classical MsFEM methods are compared with adjusted MsFEM methods, stabilized versions of the methods, and a splitting method that treats the multiscale diffusion and the strong advection separately. 
\end{abstract}

\section{Introduction}

We consider in this work an advection-diffusion equation that has both a multiscale character (encoded in an highly oscillatory diffusion coefficient) and a dominating advection. Formally, the equation reads as
\begin{equation}
\label{eq:adv-diff-formal}
-\text{div } (A^\eps\nabla u^\eps) + b\cdot\nabla u^\eps = f \quad \text{in }\Omega,
\qquad
u^\eps=0 \quad\text{on }\partial\Omega.
\end{equation}
Our self-explanatory notation will be made precise in the sequel, along with the mathematical setting that allows to rigorously consider this equation. Our purpose is to investigate whether numerical methods dedicated to the treatment of multiscale phenomena, such as Multiscale Finite Element Methods (henceforth abbreviated as MsFEM) and methods specifically designed to address the dominating advection, such as Streamline-Upwind/Petrov-Galerkin (SUPG) type methods, can separately adequately address the twofold problem, or, if need be, to discover how these methods may be combined to form the best possible approach in various regimes.
 
Equation~\eqref{eq:adv-diff-formal} is practically relevant and interesting {\it per se}. Our study of this particular equation is nevertheless rather to be seen as a step toward the study of the following much more relevant case, which will be performed in an upcoming work~\cite{these-madiot}: an (single-scale) advection-diffusion equation, with a dominating advection term, posed on a {\em perforated} domain (in that vein, see~\cite{degond2014crouzeix}). In a previous, somewhat related couple of studies~\cite{lebris2013msfem,lebris2014msfem}, we have used with much benefit the highly oscillatory case as a test-bed for designing and studying approaches subsequently used for the more challenging perforated case. 

\medskip

Methods of the MsFEM type have proved efficient in a number of contexts. In essence, they are based upon choosing, as specific finite dimensional basis to expand the numerical solution upon, a set of functions that themselves are solutions to a highly oscillatory {\em local} problem, at scale~$\varepsilon$, involving the differential operator present in the original equation. This problem-dependent basis set is likely to better encode the fine-scale oscillations of the solution and therefore allow to capture the solution more accurately. Numerical observation along with mathematical arguments prove that this is indeed generically the case. For the specific advection-diffusion equation~\eqref{eq:adv-diff-formal} we consider here, two natural options for the construction of the basis set are (i) to pick as basis functions solutions to the (multiscale) diffusion operator only, or (ii) to also involve in the definition of the functions the advection operator. These two approaches will be among the set of approaches considered and tested below. In the former option, when the basis functions do not involve the advection operator, one may fear that, in the presence of advection, and especially in the presence of a strong advection that dominates the diffusion -- a regime we focus on throughout this work --, the accuracy of the classical MsFEM dramatically deteriorates. This is for instance the case, "when $\varepsilon=1$", for classical $\mathbb{P}^1$ finite element methods. Stabilization procedures are then in order and we will indeed adapt such a procedure to the present multiscale context. On the other hand, in the latter option, it is unclear whether the presence of the advection term {\em also} for the definition of the basis functions allows, or not, for the method to also perform well in the advection-dominated regime. This will be investigated below. However well such an approach performs, the fact that the advection is involved in the definition of the finite elements might create issues, and be prohibitively expensive computationally, when the advection varies and the equation needs to be solved repeatedly, either because the present steady state setting of~\eqref{eq:adv-diff-formal} is in fact a time iteration within the numerical simulation of a time-dependent equation, or because equation~\eqref{eq:adv-diff-formal} is part of an optimization, or inverse problem. Also, inserting the advection term in the definition of the basis functions is a \emph{very} invasive implementation, which might be problematic in some contexts. Both observations are sufficient motivations to also consider a splitting method, separately addressing the multiscale character with a classical MsFEM approach for the solution of the diffusion operator, and solving a single-scale advection-dominated advection-diffusion equation with a stabilized $\mathbb{P}^1$ method.

The four MsFEM-type approaches we have just mentioned (classical -- that is, with basis functions constructed from diffusion only --, classical and stabilized, advection-diffusion based, splitting the advection and the multiscale character) will be studied and compared. For reference, we will also use a $\mathbb{P}^1$ finite element method, stabilized or not, in particular to investigate when the multiscale nature of the problem and the domination of the convection matter, or not.

\medskip

In the context of HMM-type methods, multiscale advection-diffusion problems with dominating convection have been considered e.g. in~\cite{abdulle2012discontinuous}. 

\medskip

Our article is organized as follows. Section~\ref{sec:description} briefly recalls, essentially for the sake of self-consistency, some basic, classical and well-known facts on the building blocks (stabilization, multiscale approaches) we use, and describes in more details the numerical approaches we consider. We next provide, in Section~\ref{section_preliminary}, a complete numerical analysis of the approaches {\em in the one-dimensional setting}. We are unfortunately unable to conduct the same analysis in higher dimensions, but some of the issues we raise and discuss in the one-dimensional context are definitely useful to understand the approaches in a more general context. In particular, we point out that the direct application of an SUPG stabilization on MsFEM leads to an approach that is {\em not} strongly consistent (in sharp contrast to its single-scale, say $\mathbb{P}^1$ version), because the basis functions are not known analytically but only up to the numerical error present in the offline precomputation. We provide a solution to that difficulty. We show that, in spite of a lack of consistency, the method we design can be certified (and numerical observation will later show it performs efficiently). We also devote some time to the detailed study, in \emph{any} dimension, of the convergence of the splitting approach.

Our final Section~\ref{section_numerical_simulations} presents a comprehensive series of numerical tests and comparisons. An executive summary of our {\em main} conclusions is as follows: 
\begin{itemize}
\item (i) the best possible approach among all those we consider is the stabilized version of MsFEM, unless one does not want to be intrusive in which case the splitting approach performs approximately equally well, for an online computational cost that might be significantly larger, especially for problems of large size for which iterative solvers have to employed;
\item (ii) the method using basis functions built upon the full advection-diffusion operator is not sufficiently stable to perform well in the advection-dominated regime;
\item (iii) when advection outrageously dominates diffusion, the multiscale character of the solution (at least in the bulk of the domain) is essentially overshadowed by the convection, and a ``classical'' stabilized $\mathbb{P}^1$ finite element method performs as well as a MsFEM-type approach, a somewhat intuitive fact that our study allows to confirm.
\end{itemize}
Further details on the approaches considered are given in the body of the text. 

\section{Description of the numerical approaches}
\label{sec:description}

We describe in Section~\ref{section_building_blocks} the standard numerical tools we use throughout this work. We next present in Section~\ref{section_our_four_appr} the four numerical methods we study. 

\subsection{Building blocks}
\label{section_building_blocks}

In this section, we briefly recall for convenience some classical elements on the two building blocks we make use of to construct the approaches we study, namely stabilization methods (more specifically, SUPG type methods) and Multiscale Finite Element Methods (MsFEM). The reader already familiar with these notions may easily skip the present section and directly proceed to Section~\ref{section_our_four_appr}.

\subsubsection{Stabilized methods}
\label{section_stabilized_methods}

We temporarily consider the \emph{single-scale} advection-diffusion problem
\begin{align}
-\alpha \Delta u + b\cdot\nabla u = f \quad\text{in }\Omega,
\qquad
u=0 \quad\text{on }\partial\Omega, 
\label{single_scale_problem}
\end{align}
where $\Omega$ is a smooth bounded domain of $\R^d$, $\alpha>0$, $b \in (L^\infty(\Omega))^d$ and $f \in L^2(\Omega)$. We suppose that 
\begin{align}
\text{div } b=0 \quad\text{in }\Omega,
\label{div_b_zero}
\end{align}
so that problem~\eqref{single_scale_problem} is coercive and amenable to standard numerical analysis techniques for coercive problems. We shall discuss the case of non-coercive problems in Remark~\ref{remark_non_coercive} below.

Let $\mathcal{T}_H$ be a uniform regular mesh of size $H$ discretizing $\Omega$, and let $V_H$ be the classical $\mathbb{P}^1$ Finite Element space associated to this mesh. The classical Galerkin approximation of~\eqref{single_scale_problem} reads as the following variational formulation:
$$
\text{Find }u_H \in V_H \text{ such that, for any $v_H \in V_H$,}
\quad a(u_H,v_H)=F(v_H), 
$$
where
\begin{equation}
a(u,v)=\int_\Omega \alpha\nabla u\cdot\nabla v + (b\cdot\nabla u) v,
\qquad
F(v)=\int_\Omega fv. 
\label{def_a_f}
\end{equation}
Since the solution $u$ to~\eqref{single_scale_problem} is in~$H^2(\Omega)$, we have the following error estimate as a direct consequence of C\'ea's lemma:
\begin{align}
\|u-u_H\|_{H^1(\Omega)} \leq CH(1+\text{Pe} \, H) \,\|u\|_{H^2(\Omega)},
\label{apriori_galerkin_single_scale}
\end{align}
where $C$ is independent of $H$ and $b$. 
We have introduced, as is classical, the global P\'eclet number
\begin{equation}
\label{global-peclet}
\text{Pe} = \frac{\|b\|_{L^\infty(\Omega)}}{2\alpha}
\end{equation}
of problem~\eqref{single_scale_problem}. We thus see that the larger the product Pe$\, H$, the larger the potential numerical error. Intuitively, the problem becomes less and less coercive as advection increasingly dominates over diffusion and, eventually, the coercivity is lost~\cite[Section 3.5.2]{ern2004theory} when Pe goes to~$+\infty$. As is well-known, the P\'eclet number directly affects the quality of the numerical results. With the standard $\mathbb{P}^1$ finite element approximation, oscillations polluting the solution are observed (see Figure~\ref{fig_stab_P1} below). 

Stabilization is a classical subject of numerical analysis. Many works (see e.g.~\cite{hughes1995multiscale} and the textbooks~\cite{quarteroni1994numerical,roos1991numerical}) have been devoted to designing stabilized methods for the convection-dominated regime. They consist in considering the following problem:
\begin{align}
&\text{Find }u_H^s\in V_H \text{ such that, for any $v_H\in V_H$,}\nonumber\\
&a(u_H^s,v_H)+a_{\text{stab}}(u_H^s,v_H)= F(v_H)+F_{\text{stab}}(v_H),
\label{varf_supg_singlescale}
\end{align}
where $a$ and $F$ are defined by~\eqref{def_a_f} and $a_{\text{stab}}$ and $F_{\text{stab}}$ are defined by
\begin{align}
&a_{\text{stab}}(u_H^s,v_H)=\sum_{\mathbf{K}\in\mathcal{T}_H} \bigg(\tau_{\mathbf{K}}\mathcal{L}u_H^s,(\mathcal{L}_{ss}+\rho \mathcal{L}_{s})v_H\bigg)_\mathbf{K}, 
\label{single_scale_stab_term} 
\\
& F_{\text{stab}}(v_H)=\sum_{\mathbf{K}\in\mathcal{T}_H} \bigg(\tau_{\mathbf{K}} f,(\mathcal{L}_{ss}+\rho \mathcal{L}_{s})v_H\bigg)_\mathbf{K}, 
\label{single_scale_F_term} 
\end{align}
where, for any $u$ and $v$, $\displaystyle (u,v)_\mathbf{K}=\int_\mathbf{K} u \, v$, $\mathcal{L}_{s}u= -\alpha\Delta u$ and $\mathcal{L}_{ss}u =b\cdot\nabla u$ are the symmetric part and the skew-symmetric part of the advection-diffusion operator ${\cal L} v=-\alpha\Delta v+b\cdot\nabla v$, respectively (recall that $b$ is divergence free in view of~\eqref{div_b_zero}). The stabilization parameter $\tau_{\mathbf{K}}$ is chosen, roughly, of the order of~$\displaystyle \frac{H}{\|b\|_{L^\infty(\Omega)}}$. The choice of $\rho$ leads to different stabilized methods: (i) the Douglas-Wang method (DW) when $\rho=-1$, the Streamline Upwind Petrov-Galerkin method (SUPG) when $\rho=0$ and the Galerkin Least Square method (GLS) when $\rho=1$. The three stabilized methods, applied on problem~\eqref{single_scale_problem}, coincide when~$V_H$ is the $\mathbb{P}^1$ Finite Element space associated to the mesh $\mathcal{T}_H$.

The modification of the discrete bilinear form as in~\eqref{varf_supg_singlescale} allows to obtain the estimate
\begin{align}
\|u-u_H^s\|_{H^1(\Omega)}\leq CH\left(1+\sqrt{\text{Pe}\, H}\right)\|u\|_{H^2(\Omega)},
\label{apriori_supg_singlescale}
\end{align}
where again $C$ is independent of $H$ and $b$. For large P\'eclet numbers (that is, Pe$\, H>1$), this estimate is better than~\eqref{apriori_galerkin_single_scale}. More accurate numerical results are indeed obtained: see Figure~\ref{fig_stab_P1} below. Note also that, in the right-hand sides of~\eqref{apriori_galerkin_single_scale} and~\eqref{apriori_supg_singlescale}, $\| u \|_{H^2(\Omega)}$ depends on $b$, a fact that we will recall in Remark~\ref{rem:fact} below.

Estimate~\eqref{apriori_supg_singlescale} is typically obtained under the assumptions
\begin{align}
\frac{|b(x)|}{2\alpha}H \geq 1 \quad \text{for almost all $x\in\Omega$},
\label{advect_x_regime}
\end{align}
and for the stabilization parameter
\begin{align}
\label{eq:def_tau}
\tau_{\mathbf{K}}(x)=\frac{H}{2|b(x)|} \quad\text{for all }\mathbf{K}\in \mathcal{T}_H.
\end{align} 
For the sake of completeness, and also because we will use similar arguments in Section~\ref{section_preliminary} below for the multiscale setting, we provide the proof of~\eqref{apriori_supg_singlescale} in Appendix~\ref{sec:appA} below. 

\medskip

\begin{remark}
\label{remark_non_coercive}
Notice that all the above analysis assumes that problem~\eqref{single_scale_problem} is coercive (see~\eqref{div_b_zero}). This is usually the case in the literature, see~\cite{MR679322,roos1991numerical}. To the best of our knowledge, the analysis of the stabilized methods of the type~\eqref{varf_supg_singlescale} has not been performed in the non-coercive case. A stabilized numerical method designed for nonsymmetric noncoercive problems is proposed and studied in~\cite{burman2013stabilized}. The method requires to solve the original problem coupled with an adjoint problem using stabilized finite element methods. Error estimates in $H^1$ and $L^2$ norms are proved under the assumption of well-posedness of the problem. Least-square methods for noncoercive elliptic problems have also been studied, see e.g.~\cite{bramble1998least,ku2007least}.
\end{remark}

\begin{remark}
The stabilized methods~\eqref{varf_supg_singlescale} can also be understood in the framework of the Variational Multiscale Methods~\cite{hughes1995multiscale}.
\end{remark}

\begin{remark}
The choice of an optimal stabilization parameter $\tau_{\mathbf{K}}$ is a difficult and sensitive question, since it affects the quality of the numerical approximation. We refer e.g. to~\cite{brezzi-bristeau,franca-frey,principe}. The Variational Multiscale Methods~\cite{hughes1995multiscale} give an interpretation of the stabilization parameter. If we assume $\tau_\mathbf{K}$ to be constant on each mesh element $\mathbf{K}$, the Variational Multiscale Methods yield the formula
\begin{equation}
\label{eq:abstrait}
\tau_\mathbf{K}=\frac{1}{|\mathbf{K}|}\int_\mathbf{K}\int_\mathbf{K} g_\mathbf{K},
\end{equation}
where $g_\mathbf{K}$ is the Green's function of the operator ${\cal L}^\star$ (i.e. the adjoint of ${\cal L}$) with homogeneous Dirichlet boundary conditions on $\partial \mathbf{K}$. Simplifying assumptions are next used to infer, from~\eqref{eq:abstrait}, a practical expression for $\tau_\mathbf{K}$.
\end{remark}

For the sake of illustration, and because it allows us to introduce notions useful for what follows, we briefly consider the one-dimensional example
\begin{align}
-\alpha u''+ bu'=f \quad\text{in }(0,1),
\qquad
u(0)=u(1)=0,
\label{pb_1D_single_scale}
\end{align}
for a constant $b$. In that case, the expression~\eqref{eq:abstrait} can be analytically computed and yields the choice
\begin{align}
\tau_\mathbf{K}=\frac{H}{2|b|}\left(\coth(\text{Pe} \, H)-\frac{1}{\text{Pe} \, H}\right).
\label{formule_tauk_1d}
\end{align}
On Figure~\ref{fig_stab_P1}, we show the exact solution to~\eqref{pb_1D_single_scale} as well as two numerical approximations. We set $\alpha=1/256$, $b=1$, $f=1$ and $H=1/16$, so that we are in the convection-dominated regime. Table~\ref{tabl_HR} shows the relative errors of the methods.

\begin{figure}[htbp]
\begin{center}
\includegraphics[width=0.71\linewidth]{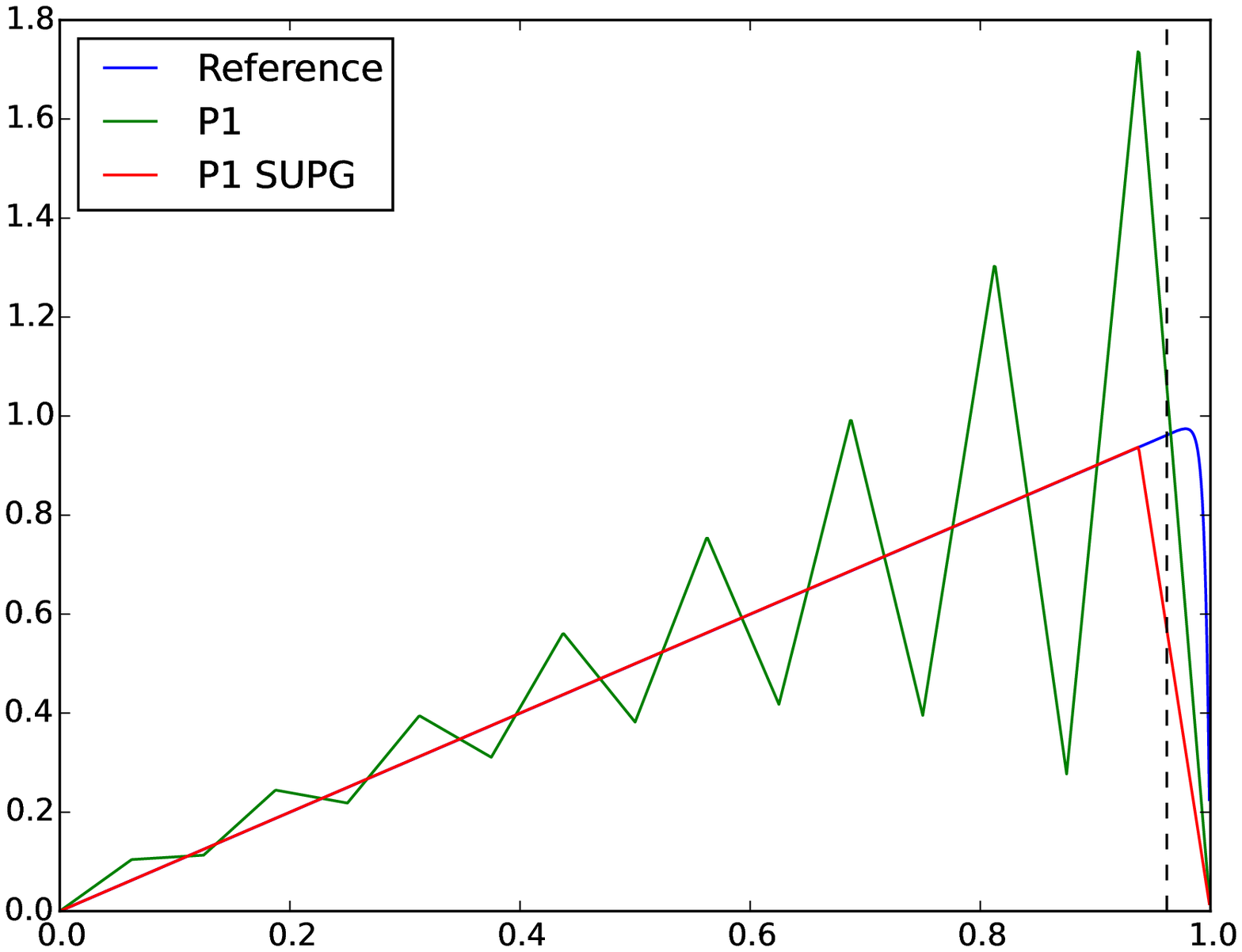}
\includegraphics[width=0.28\linewidth]{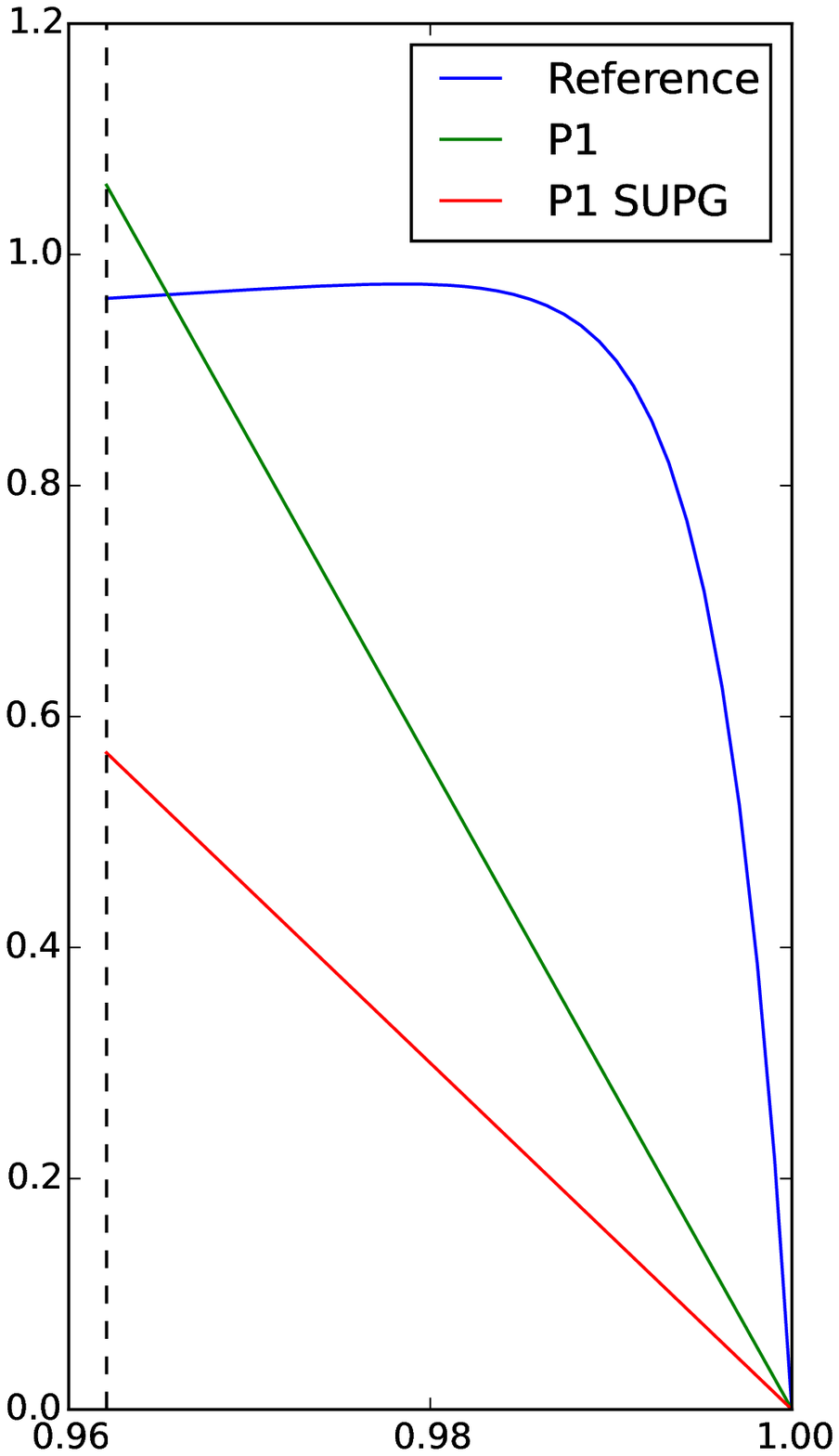}
\end{center}
\caption{Exact and numerical solutions to~\eqref{pb_1D_single_scale} for Pe$\, H=8$. Left: Plot on the whole domain. Right: Close-up on the boundary layer. The vertical dashed line delineates the boundary layer.}
\label{fig_stab_P1}
\end{figure}

\begin{table}[htbp]
\begin{center}
\begin{tabular}{c||c|c||c|c}
& $L^2$: $\mathbb{P}^1$ & $L^2:$ $\mathbb{P}^1$ SUPG & $H^1$: $\mathbb{P}^1$ & $H^1$: $\mathbb{P}^1$ SUPG\\
\hline
Outside the layer & 0.3217 &0.0624& 0.8913 & 0.2228\\
Inside the layer & 0.0297 & 0.1549 & 0.3722 & 0.7163\\
In the whole domain & 0.3513 & 0.2173 & 1.2635 & 0.9391 \\
\end{tabular}
\caption{Relative errors for \eqref{pb_1D_single_scale} with $\alpha=1/256$, $b=1$, $f=1$ and $H=1/16$. The parameter $\tau_\mathbf{K}$ is given by~\eqref{formule_tauk_1d}.
\label{tabl_HR}}
\end{center}
\end{table}

On Figure~\ref{fig_stab_P1}, we can distinguish two regions. Outside the boundary layer, the $\mathbb{P}^1$ SUPG method accurately approximates the solution. It has no spurious oscillations, in contrast to the standard $\mathbb{P}^1$ method. Inside the boundary layer, the $\mathbb{P}^1$ SUPG method only poorly performs. 

\subsubsection{MsFEM approaches}
\label{section_msfem}

We now insert a multiscale character in our problem and temporarily erase the transport field~$b$, which we will shortly reinstate in the next section. We consider the solution $u^\eps \in H^1_0(\Omega)$ to
\begin{align}
-\text{div } (A^\eps\nabla u^\eps) = f \quad\text{in }\Omega,
\qquad
u^\eps=0 \quad\text{on }\partial\Omega.
\label{pb_diff_multiscale}
\end{align}
We assume that the diffusion matrix $A^\eps$, encoding the oscillations at the small scale, is elliptic in the sense that there exists $0<\alpha_1\leq\alpha_2$ such that 
\begin{align}
\forall \eps, \quad \forall \xi \in \R^d, \quad \alpha_1|\xi|^2\leq(A^\eps(x)\xi)\cdot\xi\leq \alpha_2|\xi|^2 \quad \text{a.e. on }\Omega.
\label{condition_alpha}
\end{align}
Throughout this article, we shall perform our theoretical analysis for general, not necessarily symmetric, matrix-valued coefficients $A^\eps$, not necessarily either of the form~$\dis A^\eps=A\left(\cdot/\eps\right)$ for a fixed matrix~$A$ (although one may consider such a case to fix the ideas). In our numerical tests, however, we only consider a scalar coefficient~$A^\eps$. 

The bottom line of the MsFEM is to perform a Galerkin approximation using specific basis functions, which are precomputed (in an offline stage) and adapted to the problem considered. 

On the prototypical multiscale diffusion problem~\eqref{pb_diff_multiscale}, the method, in one of its simplest variant, consists of the following three steps:
\begin{enumerate}[i)]
\item Introduce a discretization of $\Omega$ with a coarse mesh; throughout this article, we work with the $\mathbb{P}^1$ Finite Element space
\begin{equation}
\label{VH-space}
V_H=\text{Span}\left\{\phi^0_i, 1\leq i\leq N_{V_H}\right\} \subset H^1_0(\Omega);
\end{equation}
\item Solve the local problems (one for each basis function for the coarse mesh)
\begin{align}
-\text{div }\left( A^\eps\nabla \psi_i^{\eps,\mathbf{K}}\right) =0 \quad \text{in }\mathbf{K},
\qquad
\psi^{\eps,\mathbf{K}}_i=\phi^0_i \quad\text{on }\partial\mathbf{K},
\label{local_pb_msfem_general}
\end{align} 
on each element $\mathbf{K}$ of the coarse mesh, in order to build the multiscale basis functions. 
\item Apply a standard Galerkin approximation of~\eqref{pb_diff_multiscale} on the space
\begin{align}
\VB = \text{Span} \left\{ \psi^\eps_i, \ 1 \leq i \leq N_{V_H} \right\} \subset H^1_0(\Omega),
\label{def_VB}
\end{align}
where $\psi^\eps_i$ is such that $\left. \psi^\eps_i \right|_\mathbf{\mathbf{K}} = \psi^{\varepsilon,\mathbf{K}}_i$ for all $\mathbf{K}\in\mathcal{T}_H$.
\end{enumerate}

The error analysis of the MsFEM method in the above case~\eqref{pb_diff_multiscale}, for $\dis A^\eps = A_{\rm per}\left(\cdot/\eps\right)$ with $A_{\rm per}$ a fixed periodic matrix, has been performed in~\cite{hou1999convergence} (see also~\cite[Theorem 6.5]{efendiev2009multiscale}). The main result is stated in the following Theorem.

\begin{theorem}
We consider the periodic case $\dis A^\eps(x)=A_{\rm per}\left(x/\eps\right)$. We assume that $A_{\rm per}$ is H\"older continuous, that the solution $u^\star$ to the homogenized problem associated to~\eqref{pb_diff_multiscale} belongs to $W^{2,\infty}(\Omega)$ and that $H>\eps$. Let $u^\eps_H$ be the MsFEM approximation of the solution $u^\eps$ to~\eqref{pb_diff_multiscale}. Then
\begin{align}
\|u^\eps-u^\eps_H\|_{H^1(\Omega)}\leq C \left( H + \sqrt{\eps} + \sqrt{\frac{\eps}{H}} \right),
\label{error_estimate_nos}
\end{align}
where $C$ is a constant independent of $H$ and $\eps$.
\label{theor_msfem_nos}
\end{theorem}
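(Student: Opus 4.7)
The plan is to follow the classical argument of Hou and Wu, namely to combine Céa's lemma with periodic two-scale expansions both for the exact solution $u^\eps$ and for the MsFEM basis functions. Since the diffusion bilinear form is $H^1_0$-coercive (with constant $\alpha_1$, by~\eqref{condition_alpha}) and $V_H^\eps \subset H^1_0(\Omega)$, Céa's lemma yields
\begin{equation*}
\|u^\eps - u^\eps_H\|_{H^1(\Omega)} \leq C \inf_{v_H \in V_H^\eps} \|u^\eps - v_H\|_{H^1(\Omega)}.
\end{equation*}
The natural candidate is the MsFEM ``interpolant'' of the homogenized solution,
$$
v_H = \sum_{i=1}^{N_{V_H}} u^\star(x_i)\, \psi^\eps_i,
$$
where $(x_i)$ are the nodes of~$\mathcal{T}_H$. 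The task reduces to showing $\|u^\eps - v_H\|_{H^1(\Omega)} \leq C(H + \sqrt{\eps} + \sqrt{\eps/H})$.

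Introduce the periodic correctors $\chi_j$ ($1\leq j\leq d$) associated to $A_{\rm per}$, and consider the two-scale ansatz $u^{\eps,1}(x) = u^\star(x) + \eps\,\chi_j(x/\eps)\,\partial_j u^\star(x)$. Classical periodic homogenization (using Hölder continuity of~$A_{\rm per}$ and the $W^{2,\infty}$ regularity of~$u^\star$) gives $\|u^\eps - u^{\eps,1}\|_{H^1(\Omega)} \leq C\sqrt{\eps}$, the $\sqrt{\eps}$ rate coming from the boundary layer on~$\partial\Omega$ where $u^{\eps,1}$ fails to vanish. A key observation, specific to the periodic setting, is that $A^\star$ is \emph{constant} and the data $\phi^0_i$ is affine on each~$\mathbf{K}$; hence the homogenized version of~\eqref{local_pb_msfem_general} has solution $\phi^0_i|_\mathbf{K}$ itself, and the corresponding two-scale ansatz for the basis function is
$$
\psi_i^{\eps,\mathbf{K},1}(x) = \phi^0_i(x) + \eps\,\chi_j(x/\eps)\,\partial_j\phi^0_i\big|_\mathbf{K}.
$$
Summing elementwise boundary-layer estimates over~$\mathcal{T}_H$ (each element has boundary measure~$\sim H^{d-1}$, layer width~$\sim\eps$, and there are~$\sim H^{-d}$ elements) then yields
$$
\Big\| v_H - I_H u^\star - \eps\,\chi_j(\cdot/\eps)\,\partial_j I_H u^\star \Big\|_{H^1(\Omega)} \leq C\sqrt{\eps/H},
$$
where $I_H$ denotes the classical $\mathbb{P}^1$ Lagrange interpolant.

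Combining these two expansions, one writes
$$
u^\eps - v_H = (u^\eps - u^{\eps,1}) + (u^\star - I_H u^\star) + \eps\,\chi_j(\cdot/\eps)\,\partial_j(u^\star - I_H u^\star) - \mathrm{BL}_\eps,
$$
where $\mathrm{BL}_\eps := v_H - I_H u^\star - \eps\,\chi_j(\cdot/\eps)\,\partial_j I_H u^\star$. The four contributions are respectively $O(\sqrt{\eps})$, $O(H)$ (standard $\mathbb{P}^1$ interpolation for $u^\star\in W^{2,\infty}$), $O(H)$ (since $\|\nabla_y\chi\|_{L^\infty}$ is controlled by Hölder regularity of the periodic correctors while $\eps\,\nabla^2 u^\star$ is of order~$\eps$), and $O(\sqrt{\eps/H})$, which together yield~\eqref{error_estimate_nos}.

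The main obstacle is the basis-function boundary-layer estimate, which is the \emph{specific} source of the $\sqrt{\eps/H}$ term in~\eqref{error_estimate_nos}: because the MsFEM basis is required to match the standard $\mathbb{P}^1$ Lagrange basis exactly on~$\partial\mathbf{K}$, rather than the two-scale ansatz value $\phi^0_i + \eps\,\chi_j(\cdot/\eps)\,\partial_j\phi^0_i$, a thin layer of width~$\sim\eps$ develops along every coarse inter-element interface. Controlling its $H^1$ norm rigorously requires either a cut-off correction of the corrector near~$\partial\mathbf{K}$ (so as to recover zero boundary values and then estimate the defect) or pointwise gradient bounds on a carefully designed local ansatz; this is the technically most delicate step, and the assumption $H>\eps$ enters precisely to guarantee that such a layer fits inside each coarse element.
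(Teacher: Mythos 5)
The paper offers no proof of this theorem: it is quoted from the literature, with the error analysis attributed to Hou, Wu and Cai (reference \cite{hou1999convergence}) and to \cite[Theorem 6.5]{efendiev2009multiscale}. Your outline is precisely the strategy of those references --- C\'ea's lemma, the choice $v_H=\sum_i u^\star(x_i)\psi_i^\eps$, the two-scale expansion of $u^\eps$ giving the $O(\sqrt{\eps})$ global boundary-layer term, the observation that the homogenized local problems are solved by the affine $\phi^0_i$ themselves (since $A^\star$ is constant), and the elementwise corrector boundary layers whose summed $H^1$ contribution of order $\sqrt{\eps/H}$ produces the resonance term --- and your bookkeeping of the four error contributions is correct. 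You are also right to single out the element-boundary corrector estimate as the technically delicate step; you describe how it would be handled (cut-off of the corrector near $\partial\mathbf{K}$ plus an $H^1$ bound on the resulting defect) but do not carry it out, so your argument remains, like the paper's, a reduction to the published result rather than a self-contained proof. Within that caveat there is no gap in the structure of the argument.
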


When the coarse mesh size $H$ is close to the scale $\eps$, a resonance phenomenon, encoded in the term $\sqrt{\eps/H}$ in~\eqref{error_estimate_nos}, occurs and deteriorates the numerical solution. The oversampling method~\cite{hou1997multiscale} is a popular technique to reduce this effect. In short, the approach, which is non-conforming, consists in setting each local problem on a domain slightly larger than the actual element considered, so as to become less sensitive to the arbitrary choice of boundary conditions on that larger domain, and next truncate on the element the functions obtained. That approach allows to significantly improve the results compared to using linear boundary conditions as in~\eqref{local_pb_msfem_general}. In the periodic case, we have the following estimate (see~\cite{efendiev2000convergence}). 

\begin{theorem}
Assume the setting and the notation of Theorem~\ref{theor_msfem_nos}. Assume additionally that the distance between an element $\mathbf{K}$ and the boundary of the macro element used in the oversampling is larger than~$H$. Then
$$
\|u^\eps-u^\eps_H\|_{H^1(\mathcal{T}_H)}\leq C \left( H+\sqrt{\eps}+ \frac{\eps}{H} \right),
$$
where $\dis \| u^\eps-u^\eps_H \|_{H^1(\mathcal{T}_H)} = \sqrt{\sum_{\mathbf{K} \in \Th} \| u^\eps-u^\eps_H \|^2_{H^1(\mathbf{K})}}$ is the $H^1$ broken norm of $u^\eps-u^\eps_H$.
\label{theor_msfem_os}
\end{theorem}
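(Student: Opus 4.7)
Since the oversampling basis $\psi^{\eps,S_\mathbf{K}}_i$ constructed on macro elements $S_\mathbf{K}\supset\mathbf{K}$ and then truncated to $\mathbf{K}$ does not match across the skeleton of~$\Th$, the method is nonconforming and the error must be measured in the broken $H^1$ norm. My plan is to use the second Strang lemma, which splits the error into a best-approximation term in $\VB$ plus a consistency error measuring the failure of the discrete variational identity on $u^\eps$. The common tool behind both pieces is the two-scale asymptotic expansion from periodic homogenization, applied both to $u^\eps$ on $\Omega$ and to each basis function on its macro element.

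Let $u^\star$ denote the homogenized solution (in $W^{2,\infty}(\Omega)$ by assumption) and let $\chi=(\chi_j)_{j=1}^d$ be the periodic correctors. The classical homogenization estimate reads
\[
\bigl\|u^\eps-u^\star-\eps\,\chi(\cdot/\eps)\cdot\nabla u^\star\bigr\|_{H^1(\Omega)}\leq C\sqrt{\eps},
\]
with the $\sqrt{\eps}$ coming from the boundary layer at $\partial\Omega$. On each macro element $S_\mathbf{K}$, the function $\psi^{\eps,S_\mathbf{K}}_i$ solves the same equation with affine boundary data $\phi^{0,S_\mathbf{K}}_i$, and thus admits an analogous expansion
\[
\psi^{\eps,S_\mathbf{K}}_i=\phi^{0,S_\mathbf{K}}_i+\eps\,\chi(\cdot/\eps)\cdot\nabla\phi^{0,S_\mathbf{K}}_i+\eps\,\theta^{\eps,S_\mathbf{K}}_i,
\]
where $\theta^{\eps,S_\mathbf{K}}_i$ is a corrector restoring the affine trace on $\partial S_\mathbf{K}$. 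I would then choose the trial function $v_H^\eps\in\VB$ by assembling, on each $\mathbf{K}$, the linear combination of the $\psi^{\eps,S_\mathbf{K}}_i$'s that reproduces the $\mathbb{P}^1$ interpolant of $u^\star$ at the vertices of $\mathbf{K}$. Subtracting the two expansions produces on $\mathbf{K}$ three contributions: the $\mathbb{P}^1$ interpolation error of $u^\star$ ($O(H)$ in $H^1$, using $u^\star\in W^{2,\infty}$), the global boundary-layer residue near $\partial\Omega$ ($O(\sqrt{\eps})$), and the macro-correctors $\eps\,\theta^{\eps,S_\mathbf{K}}_i$ restricted to $\mathbf{K}$.

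The entire improvement brought by oversampling lies in this last term and is the main obstacle. Without oversampling, the corrector must absorb the non-vanishing trace of $\eps\,\chi(\cdot/\eps)\cdot\nabla u^\star$ directly on $\partial\mathbf{K}$, producing the resonant rate $\sqrt{\eps/H}$. Here, the correction is posted on $\partial S_\mathbf{K}$ at distance at least $H$ from $\mathbf{K}$, and I would control its interior norm via an $A^\eps$-Caccioppoli estimate combined with a uniform $L^\infty$ bound (coming from De Giorgi--Nash--Moser applied to $\theta^{\eps,S_\mathbf{K}}_i$, which is $A^\eps$-harmonic on $S_\mathbf{K}$), giving $\|\eps\,\theta^{\eps,S_\mathbf{K}}_i\|_{H^1(\mathbf{K})}\leq C\,\eps/H$. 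With this decay estimate in hand, the consistency error is handled by writing $a(u^\eps,w_H)-F(w_H)=\sum_{\mathbf{K}}\int_{\partial\mathbf{K}}(A^\eps\nabla u^\eps)\cdot n\,[w_H]$ for $w_H\in\VB$; substituting the two-scale expansions of $u^\eps$ and of $w_H$, the macroscopic jumps cancel to leading order and the remainder is bounded by trace inequalities together with the decay bound on $\theta^{\eps,S_\mathbf{K}}_i$, yielding an $O(H+\sqrt{\eps}+\eps/H)$ contribution. Collecting the approximation and consistency bounds gives the claimed estimate.
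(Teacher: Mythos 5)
The paper does not actually prove Theorem~\ref{theor_msfem_os}: it is recalled from the literature (see~\cite{efendiev2000convergence}), so there is no in-paper proof to compare against. Your sketch follows the strategy of that reference faithfully: the second Strang lemma for the nonconforming method, two-scale expansions of both $u^\eps$ and the oversampled basis functions, and --- crucially --- an interior estimate on the macro-correctors $\theta^{\eps,S_\mathbf{K}}_i$ that exploits the hypothesis that $\mathbf{K}$ lies at distance at least $H$ from $\partial S_\mathbf{K}$; this is exactly the mechanism by which the resonance term $\sqrt{\eps/H}$ of Theorem~\ref{theor_msfem_nos} is improved to $\eps/H$. One caveat on scaling: the per-function bound $\|\eps\,\theta^{\eps,S_\mathbf{K}}_i\|_{H^1(\mathbf{K})}\leq C\eps/H$ cannot be correct as stated, because the boundary datum $-\chi(\cdot/\eps)\cdot\nabla\phi^{0,S_\mathbf{K}}_i$ is of size $1/H$ in $L^\infty$ and the $L^2(\mathbf{K})$ norms carry a volume factor $H^{d/2}$. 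The clean route is to apply the maximum principle (or De Giorgi--Nash--Moser) and the Caccioppoli inequality to the \emph{assembled} corrector $\sum_i c_i\,\theta^{\eps,S_\mathbf{K}}_i$, whose boundary datum $-\chi(\cdot/\eps)\cdot\nabla I_H u^\star$ is $O(\|\nabla u^\star\|_{L^\infty})=O(1)$; this yields $\eps\,\|\nabla\sum_i c_i\theta^{\eps,S_\mathbf{K}}_i\|_{L^2(\mathbf{K})}\leq C\,\eps\,H^{d/2-1}$, which, squared and summed over the $O(H^{-d})$ elements, gives the announced global contribution $\eps/H$. With that repair, and granting the standard trace and jump estimates for the consistency term of the nonconforming method, your outline is the correct one.
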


\begin{remark}
The boundary conditions imposed in~\eqref{local_pb_msfem_general} are the so-called linear boundary conditions. Besides the linear boundary conditions, and the oversampling technique alluded to above, there are many other possible boundary conditions for the local problems. They may give rise to conforming, or non-conforming approximations. The choice sensitively affects the overall accuracy. We will explore this issue, in our specific context, in Section~\ref{change_bc} below. 
\end{remark}

It is important to notice that the estimates of Theorems~\ref{theor_msfem_nos} and~\ref{theor_msfem_os} hold true assuming that the multiscale basis functions employed to compute the approximation~$u^\eps_H$ are the \emph{exact} solutions of the local problems. In practice of course, the local problems~\eqref{local_pb_msfem_general} are only approximated numerically, using a fine mesh of size $h$ sufficiently small to capture the oscillations at scale~$\eps$. 

\bigskip

As mentioned above, our purpose is to understand how to adapt the stabilization methods and the MsFEM methods in order to efficiently approximate 
\begin{align}
-\text{div } (A^\eps\nabla u^\eps) + b\cdot\nabla u^\eps = f \quad\text{in }\Omega,
\qquad
u^\eps=0 \quad\text{on }\partial\Omega,
\label{pb_multiscale}
\end{align}
where $A^\eps \in (L^\infty(\Omega))^{d\times d}$ satisfies~\eqref{condition_alpha}, $b \in (L^\infty(\Omega))^d$ and $f \in L^2(\Omega)$. Notice that the transport field~$b$ is assumed to be independent of $\eps$. We also choose it divergence-free as in~\eqref{div_b_zero}. The variational formulation of~\eqref{pb_multiscale} is:
\begin{align}
\text{Find }u^\eps \in H^1_0(\Omega) \text{ such that, for any $v \in H^1_0(\Omega)$,}
\quad a^\eps(u^\eps,v)=F(v),
\label{varf_pb_multiscale}
\end{align}
where
\begin{equation}
\label{eq:varf_plus}
a^\eps(u,v)=\int_\Omega (A^\eps\nabla u) \cdot \nabla v + (b\cdot\nabla u)\, v, 
\qquad
F(v)=\int_\Omega fv.
\end{equation}
We now introduce in Section~\ref{section_our_four_appr} below the four numerical approaches we consider.

\subsection{Our four numerical approaches}
\label{section_our_four_appr}

\subsubsection{The classical \MsFEMB{} and its stabilized version}
\label{section_msfemB}

The classical \MsFEMB{} described in Section~\ref{section_msfem} is the first approach we consider. It performs a Galerkin approximation of~\eqref{pb_multiscale} on the space~\eqref{local_pb_msfem_general}-\eqref{def_VB}. Notice that in this approximation, the transport term~$b\cdot\nabla$, although present in the equation~\eqref{pb_multiscale}, is absent from the local problems~\eqref{local_pb_msfem_general} and thus from the definition of the basis functions. It is immediate to realize that this approach coincides with the standard $\mathbb{P}^1$ method on~\eqref{single_scale_problem} when $A^\eps= \alpha \, \text{Id}$. Consequently, the method is expected to be unstable in the convection-dominated regime, as recalled in Section~\ref{section_stabilized_methods}, and this is indeed observed in practice, as will be seen in Section~\ref{section_sensitivity_peclet}.

This motivates the introduction of a stabilized version of this method, which is the adaptation to the multiscale context of the classical SUPG method. As we shall now see, some difficulty arises regarding the consistency of the approach, owing to the fact that the basis functions we use in practice are only approximate.

First, we consider the exact approximation space $\VB$ defined by~\eqref{def_VB}. The SUPG stabilization, readily applied to our problem~\eqref{varf_pb_multiscale}, yields the following variational formulation:
\begin{align}
&\text{Find }u^\eps_H\in \VB \text{ such that, for any $v_H^\eps\in \VB$,} \nonumber
\\
&a^\eps(u^\eps_H,v^\eps_H)
+a_{\text{stab}}(u^\eps_H,v^\eps_H)
= F(v_H^\eps)
+F_{\text{stab}}(v_H^\eps),
\label{varf_msfemB_SUPG_helene}
\end{align}
where we recall that the SUPG stabilization terms are (see~\eqref{single_scale_stab_term} and~\eqref{single_scale_F_term})
\begin{align}
a_{\text{stab}}(u_H^\eps,v_H^\eps) &= \sum_{\textbf{K}\in\mathcal{T}_H} \Big( \tau_\mathbf{K} \left(-\text{div }\left( A^\eps\nabla u_H^\eps\right) + b\cdot \nabla u_H^\eps\right),b\cdot\nabla v_H^\eps \Big)_{L^2(\mathbf{K})},
\label{supg_ideal}
\\
F_{\text{stab}}(v_H^\eps) &= \sum_{\mathbf{K}\in\mathcal{T}_H} \bigg(\tau_{\mathbf{K}} f,b\cdot \nabla v_H^\eps\bigg)_\mathbf{K}. 
\nonumber
\end{align} 
The method is, as is well known, strongly consistent. Because of the definition of the approximation space $\VB$, we have
\begin{align}
a_{\text{stab}}(u_H^\eps,v_H^\eps)=a_{\text{upw}}(u_H^\eps,v_H^\eps) \quad \text{for any } (u_H^\eps,v_H^\eps)\in(\VB)^2,
\label{supg_trick}
\end{align}
where 
\begin{align}
a_{\text{upw}}(u_H^\eps,v_H^\eps)=\sum_{\textbf{K}\in\mathcal{T}_H} \left(\tau_\mathbf{K} b\cdot\nabla u_H^\eps,b\cdot\nabla v_H^\eps\right)_{L^2(\mathbf{K})}.
\label{supg_real}
\end{align}
In practice however, we only know a discrete approximation~$\psi^{\eps,h}$, on a fine mesh $\mathbf{K}_h$, of the solution $\psi^{\eps}$ to~\eqref{local_pb_msfem_general}. Put differently, we manipulate~$\VBh = \text{Span} \{\psi^{\eps ,h}_i,1\leq i\leq N_{V_H}\}$ instead of~$\VB$. It follows that, for example when~$A^\eps\in \mathcal{C}^0(\overline{\Omega})$ and we use a $\mathbb{P}^1$ approximation on a fine mesh $\mathbf{K}_h$ for the local problem~\eqref{local_pb_msfem_general}, $A^\eps\nabla u_{H,h}^\eps$ may be discontinuous at the edges of the mesh~$\mathbf{K}_h$, and $-\text{div }( A^\eps\nabla u_{H,h}^\eps) \notin L^1_{\rm loc}(\mathbf{K})$.

We may consider at least two ways to circumvent that difficulty. First, if the matrix coefficient $A^\eps$ is locally sufficiently regular, we may define the stabilization term as
\begin{multline*}
\widetilde{a}_{\text{stab}}(u_{H,h}^\eps,v_{H,h}^\eps) \\ = \sum_{\mathbf{K}\in\mathcal{T}_H}\sum_{\kappa\subset K_h}\Big( \tau_\mathbf{K} \left(-\text{div }\left( A^\eps\nabla u_{H,h}^\eps\right) + b\cdot \nabla u_{H,h}^\eps\right),b\cdot\nabla v_{H,h}^\eps \Big)_{L^2(\mathbf{\kappa})}.
\end{multline*} 
When, as is the case here, we employ a $\mathbb{P}^1$ approximation on $\mathbf{K}_h$, all we need for this stabilization term to make sense is that the vector field~$\text{div} \left(A^\eps\right)$ belongs to~$L^1(\kappa)$ for all $\kappa\subset K_h$. This is more demanding than the simple classical assumption $A^\eps\in L^\infty(\Omega)$. Under this assumption, we obtain a strongly consistent stabilized method. We will however not proceed in this direction and favor an alternate approach, to which we now turn.

Based upon the observation~\eqref{supg_trick} for the "ideal" space~$\VB$, we may use the stabilization term~\eqref{supg_real} rather than~\eqref{supg_ideal}. In contrast to~\eqref{supg_ideal}, the quantity~\eqref{supg_real} is also well defined on $\VBh$. And this holds true without any additional regularity assumption on $A^\eps$. The \MsFEMBSUPG{} method we employ is hence defined by the following variational formulation:
\begin{align}
&\text{Find }u^\eps_{H,h}\in \VBh \text{ such that, for any $v_{H,h}^\eps\in \VBh$,}\nonumber\\
&a^\eps(u^\eps_{H,h},v^\eps_{H,h})
+a_{\text{upw}}(u^\eps_{H,h},v^\eps_{H,h})
= F(v_{H,h}^\eps)
+F_{\text{stab}}(v_{H,h}^\eps).
\label{varf_msfemB_SUPG}
\end{align}
We emphasize that employing that stabilization comes at a price: we give up on strong consistency. We provide in Section~\ref{section_msfemB_SUPG_1D}, Theorem~\ref{thm_HRFM_Bstabh} below, an error estimate in the one-dimensional setting for this method. Despite the absence of consistency, we can still prove that the method is convergent.

\subsubsection{The \MsFEMA{} variant}
\label{section_msfemA}

In contrast to our first two approaches, the \MsFEMA{} approach we discuss in this section accounts for the transport field in the local problems. For each mesh element $\mathbf{K}\in\mathcal{T}_H$, we indeed now consider 
\begin{equation}
-\text{div }\left(A^\eps\nabla\phi^{\eps,\mathbf{K}}_i\right)+b\cdot\nabla\phi^{\eps,\mathbf{K}}_i = 0 \ \ \text{ in $\mathbf{K}$},
\qquad
\phi^{\eps,\mathbf{K}}_i = \phi^0_i \ \ \text{ on $\partial \mathbf{K}$},
\label{local_problem_nos_msfemA}
\end{equation}
instead of~\eqref{local_pb_msfem_general}, and next the approximation space
$$
\VA = \text{Span} \left\{ \phi^\eps_i, \ 1 \leq i \leq N_{V_H} \right\} \subset H^1_0(\Omega)
$$
defined as in~\eqref{def_VB}. Problem~\eqref{local_problem_nos_msfemA} is an advection-diffusion problem with, in principle, a high P\'eclet number. Nevertheless, the problem is local and is to be solved offline, so we may easily employ a mesh size sufficiently fine to avoid the issues presented in Section~\ref{section_stabilized_methods}. 

There is however a difficulty in considering~\eqref{local_problem_nos_msfemA} and $b$-dependent basis functions~$\phi^{\eps,\mathbf{K}}_i$. In the context where we want to repeatedly solve~\eqref{pb_multiscale} for multiple~$b$, for instance when $b$ depends on an external parameter such as time, the method becomes prohibitively expensive as we will see in Section~\ref{section_cost}. 

\medskip 

We note in passing the following consistency. In the one-dimensional single-scale example~\eqref{pb_1D_single_scale}, the stiffness matrix of the \MsFEMA{} method is
$$
M_{\text{\MsFEMA}}=\text{Tridiag}\left( \frac{-b\exp\left(bH/\alpha\right)}{\exp\left(bH/\alpha\right)-1}, 
|b|\coth\left(\frac{|b|H}{2\alpha}\right), 
\frac{-b}{\exp\left(bH/\alpha\right)-1}
\right).
$$
It then coincides with the stiffness matrix $M_{\mathbb{P}^1 \text{SUPG}}$ of the $\mathbb{P}^1$ SUPG method with $\tau_{\mathbf{K}}$ given by~\eqref{formule_tauk_1d}. 

We also note that, in view of~\eqref{supg_ideal}--\eqref{local_problem_nos_msfemA}, we have that $a_{\text{stab}}(u_H^{\eps,\text{Adv}},v_H^{\eps,\text{Adv}})=0$ for any~$(u_H^{\eps,\text{Adv}},v_H^{\eps,\text{Adv}})\in(\VA)^2$. Such a stabilization is therefore void on the \MsFEMA{} method. Actually, we shall see in the numerical tests of Section~\ref{section_accuracies} that the \MsFEMA{} method is only moderately sensitive to the P\'eclet number.

\medskip

MsFEM type basis functions depending on the transport term for multiscale advection-diffusion problems have already considered in the literature. In~\cite{park2004multiscale}, two settings are investigated. The \MsFEMA{} is first applied to the time-dependent multiscale advection-diffusion equation
$$
\partial_tu^\eps -\Delta u_\eps +\frac{1}{\eps}b\left(\frac{\cdot}{\eps}\right)\cdot\nabla u^\eps = 0 \quad\text{ in }\R^2,
$$
with $b=\nabla^\perp\psi$ where $\dis\psi\left(x\right)=\psi\left(x_1,x_2\right)=\frac{1}{4\pi^2}\sin({2\pi x_1})\sin({2\pi x_2})$. The field $b$ is thus $\Z^d$-periodic, divergence-free and of mean zero. The purpose is then to only capture macroscopic properties of the solution $u^\eps$. Also in~\cite{park2004multiscale}, the \MsFEMA{} is investigated on the problem
$$
-\Delta u^\eps +b^{\eps}\cdot\nabla u^\eps = f,
$$
with $b^\eps \in (L^\infty(\Omega))^2$ and $f \in L^2(\Omega)$. Only the following $L^2$ error estimate
$$
\frac{\|u^\eps-u^\eps_H\|_{L^2(\Omega)}}{\|u^\eps\|_{L^2(\Omega)}}\leq C\frac{\eps}{H}+CH^2\|f\|_{L^2(\Omega)}
$$
is derived, and not an~$H^1$ estimate which would be sensitive to how well the fine oscillations are captured by the numerical approach. It is completed in the periodic case, where $\dis b^\eps(x) = \frac{1}{\eps} b_{\rm per}\left(\frac{x}{\eps}\right)$ for a fixed, periodic, divergence-free function $b_{\rm per}$ of mean zero, under some assumptions which have been numerically verified on some examples. An experimental study of convergence is performed and shows good agreement with the above theoretical error estimate. 

\medskip

A second reference we wish to cite is~\cite{ouaki2013etude}. The author studies there the problem
\begin{align*}
\left\{\begin{aligned}
&\rho^\eps \partial_t u^\eps - \text{div} (A^\eps \nabla u^\eps)+ \frac{1}{\eps} b^\eps \cdot \nabla u^\eps=0 \quad\text{ in }(0,1)^d\times(0,T),\\
&u^\eps(0,\cdot)=u^0 \quad\text{ in }(0,1)^d,\\
&u^\eps(t,\cdot) \text{ is }(0,1)^d\text{-periodic,}
\end{aligned}\right.
\end{align*}
where $u^0\in W^{m,\infty}_{\rm per}((0,1)^d)$ with $m\geq 3$. The functions $\rho^\eps \in L^\infty((0,1)^d)$, $b^\eps \in (L^\infty((0,1)^d))^d$ and $A^\eps \in (L^\infty((0,1)^d))^{d\times d}$ do not depend on time. It is assumed that there exists a constant $\rho_m>0$ such that $\rho^\eps\geq\rho_m$ a.e. on $(0,1)^d$, and that $b^\eps$ is divergence-free. In contrast to~\cite{park2004multiscale}, the mean of $b^\eps$ is not assumed to vanish (but periodic boundary conditions are imposed on $\partial (0,1)^d$). In the convection-dominated regime, the problem is stabilized using the characteristics method for integrating the transport operator $\dis\partial_t +\frac{b^\star_H}{\eps}\cdot\nabla$, and the multiscale finite element method for the remaining part of the convection term, i.e. $\dis\frac{b^\eps-\rho^\eps b^\star_H}{\eps}\cdot\nabla$, where $\dis b^\star_H|_{\mathbf{K}}=\frac{\int_{\mathbf{K}}b^\eps}{\int_{\mathbf{K}}\rho^\eps}$ for all $\mathbf{K}\in\Th$. The MsFEM approach which is used in~\cite{ouaki2013etude} is inspired by the variant of the Multiscale Finite Element approach introduced in~\cite{allaire2005multiscale} for purely diffusive problems. The multiscale basis functions are thus defined by~$\phi^\eps_j(x)=\phi^0_j\Big( w^{\eps, H}(x) \Big)$ for~$1 \leq j \leq N_{V_H}$, where $\phi^0_j$ are the $\mathbb{P}^1$ basis functions and~$w^{\eps, H}|_{\mathbf{K}} = (w^{\eps, \mathbf{K}}_{1},\dots,w^{\eps, \mathbf{K}}_{d})$ for each~$\mathbf{K}\in\Th$, where, for any $i=1,\dots,d$, the function $w^{\eps, \mathbf{K}}_i$ is the solution to
$$
-\text{div }\left(A^\eps\nabla w^{\eps, \mathbf{K}}_{i}\right)+\frac{b^\eps-\rho^\eps b^\star_H}{\eps}\cdot\nabla w^{\eps, \mathbf{K}}_{i} = 0 \ \ \text{ in $\mathbf{K}$},
\qquad
w^{\eps, \mathbf{K}}_{i} = x_i \ \ \text{ on $\partial \mathbf{K}$}.
$$
Note that, as in~\eqref{local_problem_nos_msfemA}, the basis functions depend on the convection field. An error estimate is established in~\cite{ouaki2013etude} for the periodic case. 

\subsubsection{A splitting approach}
\label{section_splitting}

The fourth, and last approach we consider is a \emph{splitting method} that decomposes~\eqref{pb_multiscale} into a single-scale, convection-dominated problem and a multiscale, purely diffusive problem. The main motivation for considering such a splitting approach is the non-intrusive character of the approach. In practice, one may couple legacy codes that are already optimized for each of the two subproblems.

Of course, splitting methods have been used in a large number of contexts. To cite only a couple of works relevant to our context, we mention~\cite{hundsdorfer2003numerical} for a review on the splitting methods for time-dependent advection-diffusion equations, and~\cite{szymczak1988analysis} for the introduction of a viscous splitting method based on a Fourier analysis for the steady-state advection-diffusion equation.

Our splitting approach for~\eqref{pb_multiscale} is the following. We define the iterations by
\begin{align}
&\left\{\begin{aligned}
&-\alpha_{\rm spl}\Delta u_{2n+2} + b\cdot\nabla u_{2n+2} = f + b\cdot\nabla (u_{2n}-u_{2n+1}) \quad\text{ in }\Omega, \\
&u_{2n+2}=0 \quad\text{ on }\partial\Omega ,
\end{aligned}\right.\label{pb_spl_1}\\
&\left\{\begin{aligned}
&-\text{div }(A^\eps\nabla u_{2n+3})=-\alpha_{\rm spl}\Delta u_{2n+2}\quad\text{ in }\Omega,\\
&u_{2n+3}=0\quad\text{ on }\partial\Omega,
\end{aligned}\right.
\label{pb_spl_2}
\end{align}
with $\alpha_{\rm spl}>0$. The initialization is e.g.~$u_0=u_1=0$. 

\medskip

The functions $u_{2n}$ with even indices are approximations defined on a coarse mesh, using $\mathbb{P}^1$ finite elements, and, since our context is that of advection-dominated problems, obtained with a SUPG formulation, as explained in Section~\ref{section_stabilized_methods}. Note that, in the right-hand side of~\eqref{pb_spl_1}, the term $-b\cdot\nabla u_{2n+1}$ is integrated on a fine mesh, as we expect this term to vary at the scale $\eps$. The discretized variational formulation of~\eqref{pb_spl_1} reads
\begin{align}
&\text{Find }u^{H}_{2n+2}\in V_H \text{ such that, for any $v\in V_H$,}\nonumber\\
&a^0(u^{H}_{2n+2},v)+a_{\text{stab}}(u^{H}_{2n+2},v)
= F^1(v)+F_{\text{stab}}(v),
\label{pb_spl_1_discrete_varf}
\end{align}
where $a_{\text{stab}}$ and $F_{\text{stab}}$ are defined by~\eqref{single_scale_stab_term} and~\eqref{single_scale_F_term}, and 
\begin{align*}
&a^0(u,v)= \int_\Omega \alpha_{\rm spl}\nabla u\cdot\nabla v + (b\cdot\nabla u) v, 
\\ 
& F_1(v) = \int_\Omega f_1 v \quad \text{with} \quad f_1=f + b\cdot\nabla (u_{2n}^{H}-u^{H}_{2n+1}).
\end{align*}

\medskip

The functions $u_{2n+1}$ with odd indices are obtained using a MsFEM type approach. A natural choice for the discretization of this problem is the \MsFEMB{} method presented in Section~\ref{section_msfem} above. The variational formulation is
\begin{align}
&\text{Find }u^{H}_{2n+3} \in \VB \text{ such that, for any $v \in \VB$,}\nonumber \\
& \int_\Omega (A^\eps\nabla u^{H}_{2n+3}) \cdot\nabla v=\int_\Omega \alpha_{\rm spl}\nabla u_{2n+2}^{H} \cdot\nabla v,
\label{pb_spl_2_discrete_varf}
\end{align}
where $\VB$ is defined by~\eqref{def_VB}.

\medskip

The termination criterion we use for the iterations is fixed as follows. Equation~\eqref{pb_spl_1_discrete_varf} is equivalent to the linear system~$M_0 [u^{H}_{2n+2}]=F^{0,H} + M_2[u^{H}_{2n}]-M_3 [u^{H}_{2n+1}]$, where $[u^{H}_{2n}]$ is the vector representing the Finite Element function $u^{H}_{2n}$ (i.e.~$\displaystyle u^{H}_{2n}(x)=\sum_{i=1}^{N_{V_H}}[u^{H}_{2n}]_i \ \phi^0_i(x) $) and likewise for $[u^{H}_{2n+2}]$ and $[u^{H}_{2n+1}]$. We stop the iterations if~$\left\|M_0 [u^{H}_{2n+2}]-(F^{0,H} + M_2[u^{H}_{2n+2}]-M_3 [u^{H}_{2n+3}])\right\|<10^ {-9}$.

\medskip

We immediately note that, if we \emph{assume} that~$u_{2n}$ and $u_{2n+1}$ converge to some $u_{\text{even}}$ and $u_{\text{odd}}$, respectively, then we have
\begin{align}
&-\alpha_{\rm spl}\Delta u_{\text{even}} = f - b\cdot\nabla u_{\text{odd}} \quad\text{ in }\Omega, 
\qquad
u_{\text{even}}=0 \quad\text{ on }\partial\Omega,
\label{pb_spl_lim_1}
\\
&-\text{div }(A^\eps\nabla u_{\text{odd}})=-\alpha_{\rm spl}\Delta u_{\text{even}}\quad\text{ in }\Omega,
\qquad
u_{\text{odd}}=0\quad\text{ on }\partial\Omega.
\label{pb_spl_lim_2}
\end{align} 
Adding~\eqref{pb_spl_lim_1} and~\eqref{pb_spl_lim_2}, we get that $u_{\text{odd}}$ is actually the solution to~\eqref{pb_multiscale}. A detailed analysis and a proof, under suitable assumptions, of the actual convergence of our splitting approach is provided in Section~\ref{prelim_splitting} below.

In theory however, there is no guarantee that, in all circumstances, the naive, fixed point iterations~\eqref{pb_spl_1}-\eqref{pb_spl_2} above converge. In all the test cases presented in Section~\ref{section_accuracies}, the iterations indeed converge. With a view to address difficult cases where the iterations might not converge, we design and study in Section~\ref{prelim_splitting} a possible alternate iteration scheme, based on a damping, which, for a well adjusted damping parameter, unconditionally converges. As will be shown in Section~\ref{section_accuracies}, this unconditional convergence comes however at the price of yielding results that are generically less accurate and longer to obtain than when using the direct fixed point iteration, when the latter converges of course. We therefore only advocate this alternate approach in the difficult cases.

\medskip

As will be seen in Section~\ref{section_accuracies} below, the splitting method and the \MsFEMBSUPG{} method provide numerical solutions of approximately identical accuracy. The non-intrusive character of the splitting method is somehow balanced by its online cost which, owing to the iterations, is larger than that of the \MsFEMBSUPG{} method. This is especially true in a multi-query context and/or for problems of large sizes only amenable to iterative linear algebra solvers. 

\section{Elements of theoretical analysis}
\label{section_preliminary}

This section is devoted to the theoretical study of our four numerical approaches. Throughout the section, we mostly work in the one-dimensional setting (in Sections~\ref{prelim_msfem}, \ref{section_msfemB_SUPG_1D} and~\ref{prelim_msfem-adv}), with the notable exception of the mathematical study in Section~\ref{prelim_splitting} of the iteration scheme~\eqref{pb_spl_1}--\eqref{pb_spl_2} used in our splitting method and of an alternative unconditionally convergent iteration scheme, which is performed with all the possible generality. Some of our results were first established in the preliminary study~\cite{HR2013}.

\medskip

The \MsFEMB{} method, the \MsFEMBSUPG{} method and the \MsFEMA{} method are studied, in Sections~\ref{prelim_msfem}, \ref{section_msfemB_SUPG_1D} and~\ref{prelim_msfem-adv} respectively, on the one-dimensional problem
\begin{equation}
-\frac{d}{d x}\left(A^\eps \frac{du^{\eps}}{d x} \right) + b \frac{du^\eps}{d x}=f \ \ \text{in $\Omega=(0,L)$},
\qquad
u^\eps(0)=u^\eps(L)=0,
\label{pb_helene}
\end{equation}
with a constant convection field $b \neq 0$, $f \in L^2(0,L)$ and a diffusion coefficient such that $0<\alpha_1\leq A^\eps(x) \leq\alpha_2$ a.e. on $\Omega$. We estimate the error in terms of $\eps$, $b$, the macroscopic mesh size $H$ and possibly the mesh size $h$ used to solve the local problems. 

\medskip

For further use, we first establish the following two propositions, namely Propositions~\ref{a_cea_lemma} and~\ref{bound_HR}. The first one is a C\'ea-type result, which holds in any dimension.

\begin{proposition}
For $d \geq 1$, let $u_H$ be the numerical solution obtained by applying any conforming Galerkin method to problem~\eqref{pb_multiscale} (on some finite dimensional space $W_H$). Then, if the matrix $A^\eps$ is symmetric, elliptic in the sense of~\eqref{condition_alpha} and $b$ satisfies~\eqref{div_b_zero}, we have
\begin{multline*}
|u^\eps-u_H|_{H^1(\Omega)} \\ \leq \inf_{v_H\in W_H}\left(\sqrt{\frac{\alpha_2}{\alpha_1}} \ |u^\eps-v_H|_{H^1(\Omega)} + \frac{\left\| \, \sqrt{b^T(A^\eps)^{-1} b} \, \right\|_{L^\infty(\Omega)}}{\sqrt{\alpha_1}} \ \|u^\eps-v_H\|_{L^2(\Omega)}\right),
\end{multline*}
where $|v|_{H^1(\Omega)}=\|\nabla v\|_{L^2(\Omega)}$ for any $v \in H^1(\Omega)$.
\label{a_cea_lemma}
\end{proposition}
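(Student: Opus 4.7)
The plan is to follow the standard Céa argument adapted to the non-symmetric bilinear form $a^\eps$, with a clever weighted Cauchy--Schwarz on the convective cross-term so that the diffusion matrix $A^\eps$ itself (rather than just the coercivity constant $\alpha_1$) appears in the prefactor of the $L^2$ term. Set $e = u^\eps - u_H$ and, for any $v_H \in W_H$, $w = u^\eps - v_H$. Because the method is conforming, Galerkin orthogonality gives $a^\eps(e, v_H - u_H) = 0$, and since $v_H - u_H = w - e \in W_H$, this yields the identity $a^\eps(e,e) = a^\eps(e,w)$.

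For the lower bound, I would use that $b$ is divergence-free (assumption~\eqref{div_b_zero}) and $e$ vanishes on $\partial\Omega$ to integrate by parts and kill the transport term: $\int_\Omega (b\cdot\nabla e)\,e = 0$. Hence $a^\eps(e,e) = \|e\|_A^2$ where $\|\cdot\|_A$ denotes the energy seminorm associated with the symmetric matrix $A^\eps$; ellipticity~\eqref{condition_alpha} gives $\sqrt{\alpha_1}\,|e|_{H^1} \le \|e\|_A \le \sqrt{\alpha_2}\,|e|_{H^1}$.

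For the upper bound on $a^\eps(e,w)$, I would split it into the diffusive and convective contributions. The diffusive part is handled by Cauchy--Schwarz in the energy inner product, giving $\int_\Omega (A^\eps\nabla e)\cdot\nabla w \le \|e\|_A \|w\|_A \le \sqrt{\alpha_2}\,\|e\|_A\,|w|_{H^1}$. The key step, and the only slightly delicate one, is the convective part: I write $b\cdot\nabla e = \bigl((A^\eps)^{1/2}\nabla e\bigr)\cdot\bigl((A^\eps)^{-1/2} b\bigr)$ and apply Cauchy--Schwarz pointwise, so that
\[
\left|\int_\Omega (b\cdot\nabla e)\, w\right| \le \int_\Omega \sqrt{(A^\eps\nabla e)\cdot\nabla e}\;\sqrt{b^T(A^\eps)^{-1}b}\;|w|
\le \bigl\|\sqrt{b^T(A^\eps)^{-1}b}\bigr\|_{L^\infty(\Omega)}\,\|e\|_A\,\|w\|_{L^2(\Omega)}.
\]
This is the step that produces exactly the matrix-weighted Péclet factor stated in the proposition, rather than a cruder $\|b\|_\infty/\alpha_1$ constant.

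Combining the two bounds with $\|e\|_A^2 = a^\eps(e,w)$, I divide both sides by $\|e\|_A$, apply $|e|_{H^1}\le \|e\|_A/\sqrt{\alpha_1}$, and finally take the infimum over $v_H \in W_H$ (equivalently, over all $w$ of the form $u^\eps - v_H$) to conclude. The whole argument is essentially routine once the weighted factorization $b = (A^\eps)^{1/2}\cdot(A^\eps)^{-1/2} b$ is seen; I do not anticipate any substantial obstacle, and the symmetry of $A^\eps$ is used only to give meaning to $(A^\eps)^{\pm 1/2}$ and to the energy Cauchy--Schwarz on the diffusive term.
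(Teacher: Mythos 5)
Your argument is correct and is essentially identical to the paper's proof: Galerkin orthogonality reduces the energy norm of the error to the cross term $a^\eps(e,w)$, the skew-symmetry of the convection term (via \eqref{div_b_zero}) yields the pure energy identity, and the factorization $b\cdot\nabla e = \bigl((A^\eps)^{1/2}\nabla e\bigr)\cdot\bigl((A^\eps)^{-1/2}b\bigr)$ with Cauchy--Schwarz produces exactly the matrix-weighted constant. The only cosmetic difference is that you pull out the $L^\infty$ norm of $\sqrt{b^T(A^\eps)^{-1}b}$ immediately, whereas the paper keeps the weighted $L^2$ integral until the final step.
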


\begin{proof}
We follow the proof of~\cite{HR2013}. Using~\eqref{div_b_zero} and the Galerkin orthogonality, we have, for any $v_H \in W_H$,
\begin{align}
& \int_\Omega \left[ \nabla (u^\eps -u_H) \right]^T A^\eps \nabla (u^\eps -u_H) \nonumber\\
&= a^\eps(u^\eps -u_H,u^\eps -u_H)\nonumber\\
& = a^\eps(u^\eps -u_H,u^\eps -v_H)\nonumber\\
& = \int_\Omega \left[ \nabla (u^\eps -v_H) \right]^T A^\eps \nabla (u^\eps -u_H) + \int_\Omega \left[ b\cdot \nabla (u^\eps -u_H) \right] (u^\eps -v_H),
\label{eq:estim}
\end{align}
where $a^\eps$ is defined by~\eqref{eq:varf_plus}. Considering the square root~$(A^\eps)^{1/2}$ of the symmetric positive definite matrix~$A^\eps(x)$, and using the Cauchy-Schwarz inequality, we have, on the one hand,
\begin{multline*}
\int_\Omega \left[ \nabla (u^\eps -v_H) \right]^T A^\eps \nabla (u^\eps -u_H) 
\\
\leq 
\left(\int_\Omega \left[ \nabla (u^\eps -u_H) \right]^T A^\eps \nabla (u^\eps -u_H) \right)^{1/2} \left(\int_\Omega \left[ \nabla (u^\eps -v_H) \right]^T A^\eps \nabla (u^\eps -v_H) \right)^{1/2},
\end{multline*}
and, on the other hand,
\begin{align*}
& \int_\Omega \left[ b\cdot \nabla (u^\eps -u_H) \right] (u^\eps -v_H) \\
&= \int_\Omega \left[ (A^\eps)^{-1/2}b \right] \cdot \left[ (A^\eps)^{1/2} \nabla (u^\eps -u_H) \right] (u^\eps -v_H)\\
&\leq \left(\int_\Omega b^T (A^\eps)^{-1} b \ (u^\eps -v_H)^2\right)^{1/2}\left(\int_\Omega \left[ \nabla (u^\eps -u_H) \right]^T A^\eps \nabla (u^\eps -u_H) \right)^{1/2}.
\end{align*}
Inserting these estimates in~\eqref{eq:estim}, we obtain
\begin{multline*}
\left(\int_\Omega \left[ \nabla (u^\eps -u_H) \right]^T A^\eps \nabla (u^\eps -u_H) \right)^{1/2}\\
\leq \left(\int_\Omega \left[ \nabla (u^\eps -v_H) \right]^T A^\eps \nabla (u^\eps -v_H) \right)^{1/2} + \left(\int_\Omega b^T (A^\eps)^{-1} b \ (u^\eps -v_H)^2\right)^{1/2}.
\end{multline*}
Using~\eqref{condition_alpha}, we infer that, for any $v_H \in W_H$,
$$
\sqrt{\alpha_1}|u^\eps - u_H|_{H^1(\Omega)}\leq \sqrt{\alpha_2}|u^\eps - v_H|_{H^1(\Omega)} + \left\| \, \sqrt{ b^T (A^\eps)^{-1} b } \, \right\|_{L^\infty(\Omega)} \, \|u^\eps - v_H\|_{L^2(\Omega)}. 
$$
This concludes the proof of Proposition~\ref{a_cea_lemma}.
\end{proof}

\begin{proposition}
Assume the ambient dimension is one. Consider $u^\eps \in H^1_0(\Omega)$ the solution to~\eqref{pb_helene}. If $\dis \frac{|b|L}{\alpha_2} \geq 1$, then
$$
|u^\eps|_{H^1(\Omega)}\leq \frac{\sqrt{2 \alpha_2 L}}{\alpha_1\sqrt{|b|}} \ \|f\|_{L^2(\Omega)}.
$$
\label{bound_HR}
\end{proposition}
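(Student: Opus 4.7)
The plan is to combine a standard energy identity with a sharp $L^\infty$ bound on $u^\eps$ that critically exploits the fact that $b$ is a constant.

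Testing the weak form of~\eqref{pb_helene} against $v=u^\eps \in H^1_0(0,L)$ and using that $b$ is constant together with $u^\eps(0)=u^\eps(L)=0$, the convective contribution reduces to $\int_0^L b\,(u^\eps)'\, u^\eps = \tfrac{b}{2}\bigl[(u^\eps)^2\bigr]_0^L = 0$, so one obtains
\begin{equation*}
\alpha_1\, |u^\eps|_{H^1}^2 \le \int_0^L A^\eps\, |(u^\eps)'|^2 = \int_0^L f\,u^\eps \le \|f\|_{L^2}\,\|u^\eps\|_{L^2}. \qquad (\star)
\end{equation*}
The remaining task is thus to bound $\|u^\eps\|_{L^2}$ by a constant times $L/|b|\cdot \|f\|_{L^2}$: inserting such an estimate into~$(\star)$ and solving for $|u^\eps|_{H^1}$ immediately yields the announced form of the bound.

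For this $L^2$ estimate, I would exploit the reformulation available precisely because $b$ is constant: $-\tfrac{d}{dx}\phi = f$, where $\phi := A^\eps (u^\eps)' - b\, u^\eps$. Hence $\phi \in H^1(0,L) \hookrightarrow C([0,L])$, Cauchy--Schwarz gives $|\phi(x)-\phi(y)| \le \sqrt{|x-y|}\,\|f\|_{L^2}$, and the flux $A^\eps (u^\eps)' = \phi + b u^\eps$ is also continuous on $[0,L]$. Let $M = u^\eps(x_0) = \max u^\eps$ and $m = u^\eps(x_1) = \min u^\eps$. At any interior extremum $z$, the one-sided difference quotients of $u^\eps$ must have opposite signs yet are equal to the continuous quantity $A^\eps(u^\eps)'(z)$ divided by the positive values $A^\eps(z^\pm)$, so they both vanish; consequently $\phi(z) = -b\,u^\eps(z)$ at such a point. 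A brief case analysis then yields the uniform $L^\infty$ bound. If $u^\eps$ changes sign, both $x_0$ and $x_1$ are interior, and so $|b|(M-m) = |\phi(x_0)-\phi(x_1)| \le \sqrt{L}\,\|f\|_{L^2}$, whence $\|u^\eps\|_{L^\infty} \le M - m$ since $m \le 0 \le M$. If $u^\eps$ has constant sign, say $u^\eps \ge 0$, only $x_0$ is interior, but $(u^\eps)'(0^+) \ge 0$, so $\phi(0) = A^\eps(0^+)\,(u^\eps)'(0^+)$ has the same sign as~$b$, opposite to $\phi(x_0) = -bM$; combining with $\phi(x_0)-\phi(0) = -\int_0^{x_0} f$ again gives $|b|\,M \le \sqrt{L}\,\|f\|_{L^2}$ (the case $b<0$ uses $\phi(L)$ in place of $\phi(0)$). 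In every case, $\|u^\eps\|_{L^\infty} \le \sqrt{L}/|b|\,\|f\|_{L^2}$, and therefore $\|u^\eps\|_{L^2} \le \sqrt{L}\,\|u^\eps\|_{L^\infty} \le L/|b|\,\|f\|_{L^2}$. Reinjecting this into~$(\star)$ yields $|u^\eps|_{H^1} \le \sqrt{L/(|b|\alpha_1)}\,\|f\|_{L^2}$, which is even sharper than the claimed bound (since $\alpha_1 \le \alpha_2$).

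The hardest part will be making the one-sided derivative argument at interior extrema fully rigorous when $A^\eps$ is merely in $L^\infty$, so that $u^\eps$ may fail to be $C^1$; this can be handled by regularizing $A^\eps$ to $A^\eps_n \in C^\infty$, proving the estimate in the classical setting where $u^\eps_n \in C^1$, and passing to the limit using the stability of the above bounds under weak convergence. The hypothesis $|b|L/\alpha_2 \ge 1$ does not seem strictly needed in the route I have outlined; it likely reflects a somewhat different and possibly less sharp but more streamlined proof chosen by the authors, which results in the constant $\sqrt{2}$ stated in the proposition.
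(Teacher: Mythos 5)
Your argument is correct, and it takes a genuinely different route from the paper. The paper splits $f=f_1+f_2$ into a constant part and a zero-mean part, handles the zero-mean part by testing \eqref{pb_helene} against the ad hoc function $bu^\eps-F$ (with $F(x)=\int_0^x f$), and handles the constant part via the maximum principle with an explicit comparison function, which is precisely where the hypothesis $|b|L/\alpha_2\geq 1$ and the factor $\sqrt{2}$ enter. You instead combine the plain energy identity with an $L^\infty$ bound extracted from the first integral $\phi=A^\eps(u^\eps)'-bu^\eps$ (available only because $b$ is constant, just as the paper's test function is) together with the vanishing of the flux at interior extrema. Your route is more elementary, dispenses with the hypothesis $\dis\frac{|b|L}{\alpha_2}\geq 1$, and yields the strictly sharper constant $\sqrt{L/(|b|\alpha_1)}$, which indeed dominates the stated bound since $\alpha_1\leq 2\alpha_2$; the paper's route, by contrast, is shorter to write down and leans on a textbook maximum principle rather than a case analysis. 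One remark on the point you flag as delicate: the regularization detour is unnecessary. Since $q=A^\eps(u^\eps)'=\phi+bu^\eps$ is continuous and $u^\eps(x)-u^\eps(z)=\int_z^x q/A^\eps$, if $q(z)\neq 0$ at an interior point $z$ then $q/A^\eps$ has a fixed sign and is bounded away from $0$ on a neighbourhood of $z$ (using $\alpha_1\leq A^\eps\leq\alpha_2$), so $u^\eps$ is strictly monotone there and $z$ cannot be an extremum; the same monotonicity argument at the endpoints replaces your appeal to one-sided derivatives, so no smoothness of $A^\eps$ beyond $L^\infty$ is ever needed. (Had you insisted on mollifying $A^\eps$, you would also have had to justify convergence of the solutions, which in one dimension requires the weak-$*$ convergence of $1/A^\eps_n$ rather than of $A^\eps_n$ itself; avoiding this is another reason to prefer the direct argument.)
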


\begin{proof}
Without loss of generality, we can assume that $b>0$. We decompose the right-hand side into a zero mean part (considered in Step 1 of the proof) and a constant part (considered in Step 2).

\medskip

\noindent {\bf Step 1.} 
We first consider the case when the mean of $f$ vanishes. Introduce $\dis F(x) = \int_0^x f$ and note that $b u^\eps -F \in H^1_0(\Omega)$, so that we can use it as test function in~\eqref{pb_helene}. This leads to
$$
\int_\Omega A^\eps (u^\eps)'(bu^\eps-F)' + b(u^\eps)'(bu^\eps-F)= \int_\Omega f(bu^\eps-F),
$$
which also reads as $\dis \int_\Omega A^\eps (u^\eps)'(bu^\eps-F)' + (bu^\eps-F)'(bu^\eps-F)= 0$, whence
$$
\int_\Omega bA^\eps (u^\eps)'(u^\eps)' = \int_\Omega A^\eps (u^\eps)'f.
$$
Using the Cauchy-Schwarz inequality and the fact that $\alpha_2 \leq |b|L$, we get
\begin{align}
\label{eq:helene_1}
|u^\eps|_{H^1(\Omega)}\leq \frac{\alpha_2}{\alpha_1 |b|}\|f\|_{L^2(\Omega)}\leq \frac{\sqrt{\alpha_2 L}}{\alpha_1 \sqrt{|b|}} \ \|f\|_{L^2(\Omega)}.
\end{align}

\medskip

\noindent {\bf Step 2.} 
We now consider the case when $f$ is constant. Without loss of generality, we can assume that $f \equiv 1$. The proof is based on the maximum principle. Introduce the function $\dis v(x)=\frac{\alpha_2}{b}\int_0^x \frac{dy}{A^\eps(y)} - u^\eps(x)$. This function is such that 
$$
-\left(A^\eps v' \right)' + bv' = \alpha_2(A^{\eps})^{-1} -1 \geq 0, \quad
v(0) = 0 \ \ \text{and} \ \ v(L)\geq 0.
$$
According to the maximum principle~\cite[Theorem 8.1]{gilbarg2001elliptic}, we have that $v(x)\geq 0$ for all $x\in [0,L]$. We deduce that, for any $x\in [0,L]$, $\dis u^\eps(x) \leq \frac{\alpha_2}{|b|}\int_0^x \frac{dy}{A^\eps(y)} \leq \frac{\alpha_2 L}{\alpha_1 |b|}$. Taking $u^\eps$ as a test function in~\eqref{pb_helene} and using that $b$ is constant, we obtain~$\dis \int_\Omega A^\eps (u^\eps)'(u^\eps)'=\int_\Omega u^\eps$. Using the Cauchy-Schwarz inequality, we obtain $\dis |u^\eps|^2_{H^1(\Omega)}\leq \frac{\alpha_2 L^2}{\alpha_1^2 |b|}$. Hence, for any constant $f$, we have
\begin{align}
\label{eq:helene_2}
|u^\eps|_{H^1(\Omega)} \leq \frac{\sqrt{\alpha_2 L }}{\alpha_1 \sqrt{|b|}} \ \|f\|_{L^2(\Omega)}.
\end{align}

\medskip

\noindent {\bf Step 3.} 
For a general right-hand side $f$, we write $f=f_1 + f_2$ where $f_1$ is constant and the mean of $f_2$ vanishes. In view of~\eqref{eq:helene_1} and~\eqref{eq:helene_2}, we see that
$$
|u^\eps|_{H^1(\Omega)} \leq \frac{\sqrt{\alpha_2 L }}{\alpha_1 \sqrt{|b|}} \left( \|f_1 \|_{L^2(\Omega)} + \|f_2 \|_{L^2(\Omega)} \right).
$$
We observe that $\|f_1\|_{L^2(\Omega)}^2+\|f_2\|^2_{L^2(\Omega)}=\|f\|^2_{L^2(\Omega)}$, due to the fact $f_1$ is constant and $f_2$ has zero mean. Hence, we have that $\|f_1\|_{L^2(\Omega)}+\|f_2\|_{L^2(\Omega)}\leq \sqrt{2}\|f\|_{L^2(\Omega)}$, and we thus deduce that
$$
|u^\eps|_{H^1(\Omega)} \leq \frac{\sqrt{2\alpha_2 L }}{\alpha_1 \sqrt{|b|}} \ \|f \|_{L^2(\Omega)}.
$$
This concludes the proof of Proposition~\ref{bound_HR}.
\end{proof}

\subsection{The \MsFEMB{} method}
\label{prelim_msfem}

In the convection-dominated regime, the error bound of the \MsFEMB{} method, introduced in Section~\ref{section_msfemB}, is given by the following theorem.

\begin{theorem}
Let $u^\eps$ be the solution to the one-dimensional problem~\eqref{pb_helene} and $u^\eps_H \in \VB$ be its approximation by the \MsFEMB{} method. Assume that $\dis \frac{|b|L}{\alpha_2} \geq 1$. Then the following estimate holds:
\begin{equation}
\label{eq:resu_theo3}
|u^\eps-u^\eps_H|_{H^1(\Omega)} \leq H \left( \sqrt{\frac{\alpha_2}{\alpha_1}} + \frac{|b|H}{\alpha_1}\right) \left( 1 + \frac{\sqrt{2\alpha_2L|b|}}{\alpha_1}\right)\frac{\|f\|_{L^2(\Omega)}}{\alpha_1}.
\end{equation}
\label{thm_HR_B}
\end{theorem}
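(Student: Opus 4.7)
The plan is to combine three ingredients: the abstract Céa-type bound of Proposition~\ref{a_cea_lemma}, a local interpolation-error estimate tailored to the \MsFEMB{} ansatz, and the a priori bound of Proposition~\ref{bound_HR} for $|u^\eps|_{H^1(\Omega)}$. In the one-dimensional scalar setting, with $b$ constant (hence divergence-free) and $A^\eps \in [\alpha_1,\alpha_2]$, Proposition~\ref{a_cea_lemma} applied to our conforming Galerkin scheme in $\VB$ gives, after using $\sqrt{b^T(A^\eps)^{-1}b} \leq |b|/\sqrt{\alpha_1}$,
\[
|u^\eps - u^\eps_H|_{H^1(\Omega)} \leq \sqrt{\alpha_2/\alpha_1}\, |u^\eps - v_H|_{H^1(\Omega)} + \frac{|b|}{\alpha_1} \, \|u^\eps - v_H\|_{L^2(\Omega)}
\]
for every $v_H \in \VB$.

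The key choice is to take as test function the MsFEM interpolant $v_H := \Pi^\eps u^\eps = \sum_i u^\eps(x_i)\,\psi_i^\eps \in \VB$, which, on every coarse element $\mathbf{K}$, solves $-(A^\eps(\Pi^\eps u^\eps)')'=0$ by the very definition \eqref{local_pb_msfem_general} of the basis functions. Setting $e := \Pi^\eps u^\eps - u^\eps$, we have $e \in H^1_0(\mathbf{K})$ on each element (it vanishes at all coarse nodes) and a straightforward subtraction yields $-(A^\eps e')' = b(u^\eps)' - f$ on each $\mathbf{K}$. Testing against $e$, integrating by parts element by element (boundary terms vanish), and applying ellipticity, Cauchy--Schwarz and the one-dimensional Poincaré inequality $\|e\|_{L^2(\mathbf{K})}\leq H\,|e|_{H^1(\mathbf{K})}$ (which sums to $\|e\|_{L^2(\Omega)}\leq H\,|e|_{H^1(\Omega)}$) leads to
\[
|e|_{H^1(\Omega)} \leq \frac{H}{\alpha_1}\bigl(|b|\,|u^\eps|_{H^1(\Omega)}+\|f\|_{L^2(\Omega)}\bigr), \qquad \|e\|_{L^2(\Omega)} \leq \frac{H^2}{\alpha_1}\bigl(|b|\,|u^\eps|_{H^1(\Omega)}+\|f\|_{L^2(\Omega)}\bigr).
\]

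Inserting these bounds into the Céa estimate above factorizes cleanly as $H(\sqrt{\alpha_2/\alpha_1}+|b|H/\alpha_1)(|b|\,|u^\eps|_{H^1(\Omega)}+\|f\|_{L^2(\Omega)})/\alpha_1$. The hypothesis $|b|L/\alpha_2 \geq 1$ assumed in the theorem is precisely what is needed to invoke Proposition~\ref{bound_HR}, which yields $|b|\,|u^\eps|_{H^1(\Omega)} \leq (\sqrt{2\alpha_2 L |b|}/\alpha_1)\,\|f\|_{L^2(\Omega)}$; plugging this in produces the final factor $1+\sqrt{2\alpha_2 L |b|}/\alpha_1$ multiplying $\|f\|_{L^2(\Omega)}/\alpha_1$, and \eqref{eq:resu_theo3} follows verbatim.

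Nothing in this proof is genuinely delicate. The single observation that makes everything fall into place is that the MsFEM interpolant $\Pi^\eps u^\eps$ absorbs, by construction, the multiscale part of the operator in the local residual: the remaining right-hand side $b(u^\eps)' - f$ is entirely macroscopic, which is why the interpolation error is $O(H/\alpha_1)$ and does not depend on the fine-scale oscillations of $A^\eps$. A minor cosmetic choice, made to match the statement without any extraneous constants, is to use Poincaré with the suboptimal constant $H$ rather than $H/\pi$.
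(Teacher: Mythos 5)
Your proposal is correct and follows essentially the same route as the paper: the same MsFEM interpolant (the paper's $R_H u^\eps$), the same local energy estimate exploiting $-\left(A^\eps (R_H u^\eps)'\right)'=0$ and the elementwise Poincar\'e inequality to get $|u^\eps-R_Hu^\eps|_{H^1(\Omega)}\leq \frac{H}{\alpha_1}\left(\|f\|_{L^2(\Omega)}+|b|\,|u^\eps|_{H^1(\Omega)}\right)$, and the same combination of Proposition~\ref{a_cea_lemma} with Proposition~\ref{bound_HR}. The only cosmetic difference is that you test the purely diffusive local residual equation directly, whereas the paper starts from the coercivity of the full form $a^\eps$ and removes the convection contribution via the identity $\left((e^I)',e^I\right)_{\mathbf{K}}=0$; the two computations are equivalent.
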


\begin{proof}
We follow the proof of~\cite{HR2013}. The error $u^\eps-u^\eps_H$ is decomposed in two parts:
$$
e^I=u^\eps-R_{H} u^\eps,
\qquad 
e^I_H=R_{H} u^\eps -u^\eps_{H},
$$
where $R_H u^\eps$ is the interpolant of $u^\eps$ in $\VB$. We have that $\dis -\frac{d}{d x}\left(A^\eps \frac{d (R_H u^\eps)}{d x} \right)=0$ in each mesh element $\mathbf{K} \in \mathcal{T}_H$ and $e_I \in H^1_0(\mathbf{K})$ for all $\mathbf{K} \in \mathcal{T}_H$. Using the variational formulation of~\eqref{pb_helene}, we get
\begin{align}
\alpha_1|e^I|_{H^1(\Omega)}^2
\leq 
a^\eps(e^I,e^I)
=
\sum_{\mathbf{K}\in\mathcal{T}_H} \bigg({\cal L}^\eps e^I,e^I\bigg)_\mathbf{K}
=
\sum_{\mathbf{K}\in\mathcal{T}_H} \bigg(f-b(R_{H} u^\eps)',e^I\bigg)_\mathbf{K},
\label{ineq_17}
\end{align}
where $\dis {\cal L}^\eps v = -\frac{d}{d x}\left(A^\eps \frac{dv}{d x} \right) + b \frac{dv}{d x}$. Now, since $e^I \in H^1_0(\mathbf{K})$, we have
\begin{align}
0
=
\bigg((e^I)',e^I\bigg)_\mathbf{K}
=
\bigg((u^\eps)',e^I\bigg)_\mathbf{K} - \bigg((R_H u^\eps)',e^I\bigg)_\mathbf{K}. 
\label{ineq_18}
\end{align}
Using that $b$ is constant, we deduce from~\eqref{ineq_17} and~\eqref{ineq_18} that
\begin{align}
\alpha_1|e^I|_{H^1(\Omega)}^2&\leq \sum_{\mathbf{K}\in\mathcal{T}_H} \|f-b(u^\eps)'\|_{L^2(\mathbf{K})}\|e^I\|_{L^2(\mathbf{K})}\nonumber\\
&\leq \sum_{\mathbf{K}\in\mathcal{T}_H} H\|f-b(u^\eps)'\|_{L^2(\mathbf{K})}|e^I|_{H^1(\mathbf{K})}\nonumber\\
&\leq H\left(\|f\|_{L^2(\Omega)}+|b| \ |u^\eps|_{H^1(\Omega)}\right)|e^I|_{H^1(\Omega)}\nonumber\\
&\leq H\left(1+\frac{\sqrt{2\alpha_2L|b|}}{\alpha_1}\right) \ \|f\|_{L^2(\Omega)} \ |e^I|_{H^1(\Omega)},
\label{ineq_8}
\end{align}
successively using the Poincar\'e inequality and Proposition~\ref{bound_HR}. 

On the other hand, using Proposition~\ref{a_cea_lemma} and the Poincar\'e inequality, we have 
\begin{align}
|u^\eps-u^\eps_H|_{H^1(\Omega)}
\leq
\sqrt{\frac{\alpha_2}{\alpha_1}} |e^I|_{H^1(\Omega)} +\frac{|b|}{\alpha_1} \|e^I\|_{L^2(\Omega)}
\leq
\left(\sqrt{\frac{\alpha_2}{\alpha_1}}+\frac{|b|H}{\alpha_1}\right)|e^I|_{H^1(\Omega)}.
\label{ineq_9}
\end{align}
Collecting~\eqref{ineq_8} and~\eqref{ineq_9}, we conclude the proof of Theorem~\ref{thm_HR_B}.
\end{proof}

\begin{remark}
Note that the estimate in Theorem~\ref{thm_HR_B} does not depend on the oscillation scale~$\eps$ of $A^\eps$, but only on the contrast~$\alpha_2/\alpha_1$.
\end{remark}

\begin{remark}
\label{rem:fact}
Assume that $A^\eps(x)=\alpha$. Then the \MsFEMB{} method reduces to the classical $\mathbb{P}^1$ method and the estimate~\eqref{eq:resu_theo3} then reads as
\begin{equation}
\label{eq:toto1}
|u-u_H|_{H^1(\Omega)} \leq H \left( 1 + \frac{|b|H}{\alpha}\right) \left( 1 + \sqrt{\frac{2L|b|}{\alpha}}\right) \frac{\|f\|_{L^2(\Omega)}}{\alpha}.
\end{equation}
On the other hand, the classical numerical analysis result for that problem has been recalled in~\eqref{apriori_galerkin_single_scale}. It is 
$\dis |u-u_H|_{H^1(\Omega)} \leq CH \left( 1+\frac{|b|H}{2\alpha} \right) \, |u|_{H^2(\Omega)}$. 
Since $-\alpha u'' + b u' = f$, $|u|_{H^2(\Omega)}$ may be bounded, using Proposition~\ref{bound_HR}, as 
$\alpha |u|_{H^2(\Omega)} 
\leq 
\|f\|_{L^2(\Omega)} + |b| \ |u|_{H^1(\Omega)} 
\leq 
\|f\|_{L^2(\Omega)} + \sqrt{2L|b|/\alpha} \ \|f\|_{L^2(\Omega)}$.
We therefore obtain
$$
|u-u_H|_{H^1(\Omega)} \leq CH \left( 1+\frac{|b|H}{2\alpha} \right) \, \left( 1 + \sqrt{\frac{2L|b|}{\alpha}} \right) \frac{\|f\|_{L^2(\Omega)}}{\alpha},
$$
which exactly coincides, up to constants independent of $b$, $\alpha$, $H$ and $f$, with~\eqref{eq:toto1}.
\end{remark}

\subsection{The \MsFEMBSUPG{} method}
\label{section_msfemB_SUPG_1D} 

For the \MsFEMBSUPG{} method, also introduced in Section~\ref{section_msfemB}, we successively consider two cases. We first consider the "ideal" approach employing the {\em exact} multiscale basis functions, solution to~\eqref{local_pb_msfem_general}. Next, we account for the discretization error when numerically solving the local problem~\eqref{local_pb_msfem_general}. 

\medskip

When the discretization error is ignored, the error estimate is the following. 
\begin{theorem}
Let $u^\eps$ be the solution to the one-dimensional problem~\eqref{pb_helene} and $u^\eps_H\in \VB$ be the solution to~\eqref{varf_msfemB_SUPG_helene}-\eqref{supg_ideal} with $\dis \tau_{\mathbf{K}} = \frac{H}{2|b|}$. Assume that $\dis\frac{|b|L}{\alpha_2} \geq 1$, and that we are in a convection-dominated regime, and hence that $\dis\frac{|b|H}{2\alpha_1} \geq 1$. Then the following estimate holds:
\begin{equation}
\label{eq:resu_theo4}
|u^\eps-u^\eps_H|_{H^1(\Omega)}\leq CH\left(1+\sqrt{\frac{\alpha_2^2}{\alpha_1^2}+\frac{|b|H}{\alpha_1}}\right) \left(1+\frac{\sqrt{2\alpha_2L|b|}}{\alpha_1}\right)\frac{\|f\|_{L^2(\Omega)}}{\alpha_1},
\end{equation}
where $C$ is a universal constant.
\label{thm_HR_Bstab}
\end{theorem}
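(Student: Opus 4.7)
The plan is to carry out a classical SUPG-type error analysis, suitably adapted to the multiscale setting by exploiting the identity~\eqref{supg_trick} together with the bounds already established in Theorem~\ref{thm_HR_B} and Proposition~\ref{bound_HR}. Introduce the mesh-dependent triple norm
$$
|||v|||^2 := \alpha_1 |v|^2_{H^1(\Omega)} + \sum_{\mathbf{K}\in\mathcal{T}_H} \tau_{\mathbf{K}} \|bv'\|^2_{L^2(\mathbf{K})},
$$
and set $B^\eps(u,v) := a^\eps(u,v) + a_{\text{stab}}(u,v)$. Because the exact local solutions are used for the basis functions, the identity~\eqref{supg_trick} collapses $a_{\text{stab}}$ on $\VB$ to $a_{\text{upw}}$; combined with the fact that $b$ is constant in 1D (a consequence of~\eqref{div_b_zero}) and the ellipticity~\eqref{condition_alpha}, this immediately yields the coercivity $B^\eps(v,v) \geq |||v|||^2$ for every $v \in \VB$. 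Moreover, the formulation~\eqref{varf_msfemB_SUPG_helene} is strongly consistent, so $B^\eps(u^\eps - u^\eps_H, v_H) = 0$ for every $v_H \in \VB$.

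I would then decompose the error as $u^\eps - u^\eps_H = e^I + e^I_H$, with $e^I = u^\eps - R_H u^\eps$ the MsFEM interpolation error introduced in the proof of Theorem~\ref{thm_HR_B} and $e^I_H = R_H u^\eps - u^\eps_H \in \VB$. Coercivity and Galerkin orthogonality give $|||e^I_H|||^2 \leq -B^\eps(e^I, e^I_H)$, and I would bound the three pieces separately. The diffusion piece is estimated by Cauchy--Schwarz as $\alpha_2 |e^I|_{H^1} |e^I_H|_{H^1}$, producing after rearrangement a factor $\alpha_2/\sqrt{\alpha_1}$. For the convection piece, integrating by parts on each element (using $e^I \in H^1_0(\mathbf{K})$), applying the Poincaré inequality on each element, and factoring out $1/\tau_{\mathbf{K}} = 2|b|/H$ to recover the SUPG part of the triple norm, yields a bound of order $\sqrt{H|b|}\,|e^I|_{H^1}\,|||e^I_H|||$. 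For the stabilization piece, the key observation is that, since $R_H u^\eps$ solves the local MsFEM problem~\eqref{local_pb_msfem_general} and $u^\eps$ solves~\eqref{pb_helene},
$$
-(A^\eps (e^I)')' + b(e^I)' = f - b(R_H u^\eps)' \ \ \text{on each element};
$$
a Cauchy--Schwarz estimate in the $\tau_{\mathbf{K}}$-weighted inner product then reduces the piece to controlling $\sqrt{H/|b|}\|f\|_{L^2}$ and $\sqrt{H|b|}\,\|(R_H u^\eps)'\|_{L^2}$, with the latter controlled by $|u^\eps|_{H^1} + |e^I|_{H^1}$.

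To conclude, I would invoke Proposition~\ref{bound_HR} to bound $|u^\eps|_{H^1}$ and estimate~\eqref{ineq_8} (from the proof of Theorem~\ref{thm_HR_B}) to bound $|e^I|_{H^1}$, thereby obtaining a bound on $|||e^I_H|||$; the target estimate~\eqref{eq:resu_theo4} then follows from the triangle inequality $|u^\eps - u^\eps_H|_{H^1} \leq |e^I|_{H^1} + |||e^I_H|||/\sqrt{\alpha_1}$. The main obstacle is bookkeeping: tracking the precise dependence on $\alpha_1$, $\alpha_2$, $|b|$ and $H$ so that the three contributions combine cleanly into the factor $1 + \sqrt{\alpha_2^2/\alpha_1^2 + |b|H/\alpha_1}$ appearing in~\eqref{eq:resu_theo4}. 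The diffusion cross term is what supplies the $\alpha_2^2/\alpha_1^2$ under the square root, whereas the convection and stabilization terms are responsible for the $|b|H/\alpha_1$ contribution. Reconciling the somewhat awkward $\sqrt{H/|b|}\|f\|_{L^2}$ piece of the stabilization bound with the common prefactor $(1+\sqrt{2\alpha_2L|b|}/\alpha_1)\|f\|_{L^2}/\alpha_1$ inherited from Theorem~\ref{thm_HR_B} is precisely where the advection-dominance hypothesis $|b|H/(2\alpha_1)\geq 1$ (which forces $\sqrt{H/|b|} \leq H/\sqrt{2\alpha_1}$) plays a decisive role.
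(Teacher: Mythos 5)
Your proposal is correct and follows essentially the same route as the paper, which establishes Theorem~\ref{thm_HR_Bstab} via Steps 2--4 of the proof of Theorem~\ref{thm_HRFM_Bstabh} specialized to exact basis functions: the same decomposition $e^I=u^\eps-R_Hu^\eps$, $e^I_H=R_Hu^\eps-u^\eps_H$, coercivity in the norm $\alpha_1|\cdot|^2_{H^1}+a_{\rm upw}(\cdot,\cdot)$, strong consistency, the estimate~\eqref{ineq_8} for $e^I$, Proposition~\ref{bound_HR}, and the hypothesis $|b|H/(2\alpha_1)\geq 1$ used exactly where you indicate. The only differences are cosmetic (packaging the energy as a triple norm, and grouping $f-b(R_Hu^\eps)'=f-b(u^\eps)'+b(e^I)'$ rather than splitting it as the paper does).
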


\begin{remark}
In the case where $A^\eps$ is constant, the \MsFEMBSUPG{} method is simply the $\mathbb{P}^1$ SUPG method. In that case, we observe, as above, that the estimate of Theorem~\ref{thm_HR_Bstab} is similar to the estimate~\eqref{apriori_supg_singlescale} obtained for the $\mathbb{P}^1$ SUPG method.
\end{remark}

Note that the right-hand side of~\eqref{eq:resu_theo4} is thought to be smaller than that of~\eqref{eq:resu_theo3}, as we think of $|b|H/\alpha_1$ as being large. Theorem~\ref{thm_HR_Bstab} is actually established following Steps 2, 3 and 4 of the proof of Theorem~\ref{thm_HRFM_Bstabh} below.

\medskip

Accounting now for the discretization error in the local problems and employing the method~\eqref{varf_msfemB_SUPG}, we now have the following error estimate.

\begin{theorem}
Let $u^\eps$ be the solution to the one-dimensional problem~\eqref{pb_helene} and $u^\eps_{H,h}\in \VBh$ be the solution to~\eqref{varf_msfemB_SUPG} with $\dis \tau_{\mathbf{K}} = \frac{H}{2|b|}$. Assume that $A^\eps \in W^{1,\infty}(\Omega)$ and that $\dis \frac{|b|L}{\alpha_2} \geq 1$. Assume also that we are in a convection-dominated regime, and hence that $\dis\frac{|b|H}{2\alpha_1} \geq 1$. Then the following estimate holds:
\begin{multline}
\label{eq:resu_theo5}
|u^\eps-u^\eps_{H,h}|_{H^1(\Omega)}
\leq 
C \left(1+\frac{H|b|}{\alpha_1} + \frac{H|b|}{\alpha_1} \sqrt{\frac{\alpha_2^2}{\alpha_1^2}+ \frac{|b|H}{\alpha_1}}\right) e_h
\\ + CH\left(1+\sqrt{\frac{\alpha_2^2}{\alpha_1^2}+ \frac{|b|H}{\alpha_1}}\right)\left(1+\frac{\sqrt{2\alpha_2L|b|}}{\alpha_1}\right)\frac{\|f\|_{L^2(\Omega)}}{\alpha_1},
\end{multline}
where $C$ only depends on $\Omega$ and where
\begin{equation}
\label{eq:def_eh}
e_h= h\left(\sqrt{\frac{\alpha_2}{\alpha_1}}+\frac{|b|h}{\alpha_1}\right) \left(1+ \sqrt{\frac{2\alpha_2L}{|b|}} \ \frac{\|(A^\eps)'-b\|_{L^\infty(\Omega)}}{\alpha_1}\right)\frac{\|f\|_{L^2(\Omega)}}{\alpha_1}.
\end{equation}
\label{thm_HRFM_Bstabh}
\end{theorem}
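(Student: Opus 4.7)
The plan is to use a Strang-lemma type argument, designed to accommodate both the discrepancy between the ideal approximation space $\VB$ and its discretized version $\VBh$, and the corresponding loss of strong consistency of the formulation~\eqref{varf_msfemB_SUPG}. The first ingredient is coercivity: since $b$ is constant (hence divergence-free) and $v \in H^1_0(\Omega)$, integration by parts cancels the advective contribution to $a^\eps(v,v)$; combined with~\eqref{condition_alpha} and $\tau_\mathbf{K}\geq 0$, this yields
$$\mathcal{B}(v,v) := a^\eps(v,v) + a_{\text{upw}}(v,v) \geq \alpha_1 |v|_{H^1(\Omega)}^2, \qquad \forall v \in H^1_0(\Omega),$$
uniformly in $\eps$ and $h$.

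Second ingredient: introduce an auxiliary $w_{H,h}\in\VBh$ which mimics in $\VBh$ the ideal nodal interpolant $R_H u^\eps \in \VB$ used in the proof of Theorem~\ref{thm_HR_B}, by sharing the same nodal values but built from the discrete basis functions $\psi^{\eps,h}_i$ in place of the exact $\psi^\eps_i$. The difference $R_H u^\eps - w_{H,h}$ is, element by element, the approximation error of the MsFEM basis functions on the fine mesh of size $h$. Applying to each local problem the argument leading to~\eqref{ineq_8}, with the source-like residual obtained by inserting $R_H u^\eps$ into the operator $\mathcal{L}v = -(A^\eps v')' + bv'$ (a term whose $L^\infty$ bound involves $\|(A^\eps)'-b\|_{L^\infty(\Omega)}$ and whose global control is eventually given by $\|f\|_{L^2(\Omega)}$ through Proposition~\ref{bound_HR}), yields the bound $|R_H u^\eps - w_{H,h}|_{H^1(\Omega)} \leq C\, e_h$.

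Now the Strang step. Setting $\varphi := w_{H,h} - u^\eps_{H,h}\in\VBh$, coercivity and the definition of $u^\eps_{H,h}$ give
$$\alpha_1 |\varphi|_{H^1(\Omega)}^2 \leq \mathcal{B}(w_{H,h}-u^\eps, \varphi) + \bigl[a_{\text{upw}}(u^\eps,\varphi) - F_{\text{stab}}(\varphi)\bigr],$$
where I have used $a^\eps(u^\eps,\varphi) = F(\varphi)$. The first term is treated by Cauchy--Schwarz, using $\tau_\mathbf{K}|b|^2 \leq H|b|/2$ together with $|u^\eps - w_{H,h}|_{H^1(\Omega)} \leq |u^\eps - R_H u^\eps|_{H^1(\Omega)} + C e_h$, the first piece being controlled by~\eqref{ineq_8}. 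For the bracket, I would insert $\pm R_H u^\eps$ and exploit the identity~\eqref{supg_trick} combined with $\mathcal{L}u^\eps = f$: on $\VB$ this bracket vanishes exactly, so on $\VBh$ what remains is a consistency residual of size $\sim (H|b|/\alpha_1)\,e_h$, the factor $H|b|/\alpha_1$ arising from the SUPG scaling $\tau_\mathbf{K}|b|\sim H$.

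The main obstacle is precisely this consistency estimate, because identity~\eqref{supg_trick} is specific to $\VB$ and fails on $\VBh$: the discrete basis functions $\psi^{\eps,h}_i$ do not exactly annihilate $-(A^\eps(\psi^{\eps,h}_i)')'$ on each $\mathbf{K}$, so the cancellation that makes the ideal formulation~\eqref{varf_msfemB_SUPG_helene} strongly consistent with the exact solution is genuinely lost. Carefully quantifying the resulting residual and tracking the dependencies on $|b|, \alpha_1, \alpha_2, H$ is what produces the prefactor $\bigl(1+H|b|/\alpha_1+(H|b|/\alpha_1)\sqrt{\alpha_2^2/\alpha_1^2+|b|H/\alpha_1}\bigr)$ of $e_h$ in~\eqref{eq:resu_theo5}. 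Once this is done, the triangle inequality $u^\eps - u^\eps_{H,h} = (u^\eps - w_{H,h}) + \varphi$, together with the ideal interpolation bound from Theorem~\ref{thm_HR_B} and a Proposition~\ref{a_cea_lemma}-type passage to the $H^1$ norm, delivers~\eqref{eq:resu_theo5}; specializing to $e_h = 0$ collapses the argument to a proof of Theorem~\ref{thm_HR_Bstab}, in line with the remark made in the text.
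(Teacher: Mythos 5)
Your overall architecture (coercivity of $a^\eps+a_{\text{upw}}$, a perturbed interpolant, a Strang-type splitting of the error into an approximation part and a consistency part) is reasonable, but it is genuinely different from the paper's, and two of its steps are problematic as stated. The paper never estimates the error $\psi^\eps_i-\psi^{\eps,h}_i$ in the basis functions at all: it introduces the fine-mesh Galerkin solution $u^\eps_h\in V_h$ as an intermediate object, splits $u^\eps-u^\eps_{H,h}=e^I_h+e^I+e^I_H$ with $e^I=u^\eps_h-R_{H,h}u^\eps_h$ the interpolation error of $u^\eps_h$ in the \emph{discrete} space $\VBh$, and exploits the exact elementwise orthogonality $\int_{\mathbf K}A^\eps(R_{H,h}u^\eps_h)'(e^I)'=0$, which holds because $e^I$ is a fine-mesh function vanishing at the macroscopic nodes and is therefore an admissible test function for the discrete local problems. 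In that decomposition the whole $h$-dependence enters only through $|u^\eps-u^\eps_h|_{H^1(\Omega)}\leq Ce_h$, which is a standard global estimate. Your route instead requires the bound $|R_Hu^\eps-w_{H,h}|_{H^1(\Omega)}\leq Ce_h$, which you assert but do not prove; it is the technical crux of your argument (elementwise $H^2$ regularity and finite element estimates for the local problems, summed with weights given by the nodal increments of $u^\eps$), and the constants it produces do not literally reproduce~\eqref{eq:def_eh} — the factor $\|(A^\eps)'-b\|_{L^\infty(\Omega)}$ in $e_h$ comes from the $H^2$ bound on $u^\eps$ through the full advection--diffusion equation, whereas your local problems are purely diffusive.

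The second, more substantive issue is your treatment of the bracket $a_{\text{upw}}(u^\eps,\varphi)-F_{\text{stab}}(\varphi)$. By strong consistency of the \emph{ideal} stabilization, $a_{\text{stab}}(u^\eps,\varphi)=F_{\text{stab}}(\varphi)$, so the bracket equals $a_{\text{upw}}(u^\eps,\varphi)-a_{\text{stab}}(u^\eps,\varphi)=\sum_{\mathbf K}\bigl(\tau_{\mathbf K}(A^\eps(u^\eps)')',b\varphi'\bigr)_{\mathbf K}=\sum_{\mathbf K}\bigl(\tau_{\mathbf K}(b(u^\eps)'-f),b\varphi'\bigr)_{\mathbf K}$. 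This does \emph{not} vanish on $\VB$ (its first argument is $u^\eps$, not an element of $\VB$, so~\eqref{supg_trick} does not apply), and it does not tend to zero with $h$: it is an $O(H)$ quantity, bounded via $\tau_{\mathbf K}|b|^2\leq H|b|/2$ and Proposition~\ref{bound_HR}, and it is precisely one source of the second, $h$-independent line of~\eqref{eq:resu_theo5} — it is already present in the ideal Theorem~\ref{thm_HR_Bstab}. Estimating it as a residual of size $(H|b|/\alpha_1)e_h$ is therefore wrong, and the "main obstacle" you identify (the failure of~\eqref{supg_trick} on $\VBh$) never actually enters the analysis once the method is \emph{defined} with $a_{\text{upw}}$: nowhere does one need to evaluate $a_{\text{stab}}$ on functions of $\VBh$. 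The bracket can be bounded correctly and lands in the right place in~\eqref{eq:resu_theo5}, so this is a repairable misattribution rather than a fatal flaw, but as written your accounting of which terms in~\eqref{eq:resu_theo5} come from the fine-mesh discretization and which come from the coarse SUPG scheme is incorrect.
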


\begin{proof}
This proof is an adaptation of the analysis in~\cite{HR2013}. We proceed as in the proof of~\eqref{apriori_supg_singlescale} (see Appendix~\ref{sec:appA}). We decompose the error $u^\eps-u^\eps_{H,h}$ in three parts:
$$
e^I_h=u^\eps- u^\eps_h,
\qquad
e^I=u^\eps_h-R_{H,h} u^\eps_h,
\qquad
e^I_H=u^\eps_{H,h} -R_{H,h} u^\eps_h,
$$
where $u_h^\eps$ is the Galerkin approximation of $u^\eps$ in $V_h$ (the $\mathbb{P}^1$ finite element space associated to the {\em fine} mesh of size $h$) and $R_{H,h} u^\eps_h$ is the interpolant of $u_h^\eps$ in $V^\eps_{H,h}$. We successively estimate $e^I_h$, $e^I$ and $e^I_H$.

\medskip

\noindent {\bf Step 1: estimation of $e^I_h$.} 
Using Proposition~\ref{a_cea_lemma} and the Poincar\'e inequality, we have
\begin{align}
|e^I_h|_{H^1(\Omega)}
& \leq \sqrt{\frac{\alpha_2}{\alpha_1}}|u^\eps-I_hu^\eps|_{H^1(\Omega)}+\frac{|b|}{\alpha_1}|u^\eps-I_hu^\eps|_{L^2(\Omega)}
\nonumber \\
&\leq
\left(\sqrt{\frac{\alpha_2}{\alpha_1}}+\frac{|b|h}{\alpha_1}\right)|u^\eps-I_hu^\eps|_{H^1(\Omega)},
\label{ineq_1}
\end{align}
where $I_hu^\eps$ is the interpolant of $u^\eps$ in $V_h$. Standard results on finite elements show that
\begin{align}
|u^\eps-I_hu^\eps|_{H^1(\Omega)}\leq Ch|u^\eps|_{H^2(\Omega)}.
\label{ineq_2}
\end{align}
Because of the equation, we have
\begin{align}
|u^\eps|_{H^2(\Omega)}&\leq \frac{\|f\|_{L^2(\Omega)}+\|(A^\eps)'-b\|_{L^\infty(\Omega)}|u^\eps|_{H^1(\Omega)}}{\alpha_1} \nonumber\\ 
&\leq\left(1+\sqrt{\frac{2\alpha_2L}{|b|}} \ \frac{\|(A^\eps)'-b\|_{L^\infty(\Omega)}}{\alpha_1}\right)\frac{\|f\|_{L^2(\Omega)}}{\alpha_1}, 
\label{ineq_3}
\end{align}
where we have used Proposition~\ref{bound_HR}. Collecting~\eqref{ineq_1}, \eqref{ineq_2} and~\eqref{ineq_3}, we obtain
\begin{align}
|e^I_h|_{H^1(\Omega)} \leq C e_h,
\label{ineq_4}
\end{align}
where $e_h$ is defined by~\eqref{eq:def_eh}.

\medskip

\noindent {\bf Step 2: estimation of $e^I$.} 
Using the coercivity of $A^\eps$, we get
\begin{equation}
\label{eq:ineq_45}
\alpha_1| e^I|^2_{H^1(\Omega)} 
\leq
\int_\Omega A^\eps\left(e^I\right)'\left(e^I\right)'
= 
\int_\Omega A^\eps (u^\eps_h)'\left(e^I\right)' - \int_\Omega A^\eps (R_{H,h} u^\eps_h)' \left(e^I\right)'.
\end{equation}
Using that $e^I$ vanishes on the macroscopic mesh nodes and the variational formulation of the basis functions $\psi^{\eps ,h}_i$ of $\VBh$ on $\mathbf{K}$, we observe that
$$
\int_\Omega A^\eps (R_{H,h} u^\eps_h)' \left(e^I\right)'
=
\sum_{\mathbf{K}\in\mathcal{T}_H} \int_{\mathbf{K}} A^\eps (R_{H,h} u^\eps_h)' \left(e^I\right)'
=
0.
$$
We thus deduce from~\eqref{eq:ineq_45} and the variational formulation satisfied by $u^\eps_h$ that
$$
\alpha_1| e^I|^2_{H^1(\Omega)} 
\leq
\int_\Omega A^\eps (u^\eps_h)'\left(e^I\right)' 
=
\int_\Omega \Big( f-b(u^\eps_h)' \Big) e^I
=
\int_\Omega \Big( f-b(u^\eps)' + b(e^I_h)' \Big) e^I.
$$
Using a Poincar\'e inequality for $e^I \in H^1_0(\mathbf{K})$ and Proposition~\ref{bound_HR}, we deduce that
\begin{eqnarray*}
\alpha_1| e^I|^2_{H^1(\Omega)} 
&\leq& 
H \Big( \|f-b(u^\eps)'\|_{L^2(\Omega)} +|b| \ |e^I_h|_{H^1(\Omega)} \Big) |e^I|_{H^1(\Omega)}
\\
&\leq& 
H \left[ \left(1+\frac{\sqrt{2\alpha_2L|b|}}{\alpha_1}\right)\|f\|_{L^2(\Omega)} +|b| \ |e^I_h|_{H^1(\Omega)} \right] |e^I|_{H^1(\Omega)}
\end{eqnarray*}
and thus
\begin{align}
|e^I|_{H^1(\Omega)} \leq H\left(1+\frac{\sqrt{2\alpha_2L|b|}}{\alpha_1}\right)\frac{\|f\|_{L^2(\Omega)}}{\alpha_1} + \frac{H|b|}{\alpha_1}|e^I_h|_{H^1(\Omega)}.
\label{ineq_5}
\end{align}

\medskip

\noindent {\bf Step 3: estimation of $e^I_H$.} 
We write
\begin{eqnarray*}
& &\alpha_1|e^I_H|^2_{H^1(\Omega)} +a_{\text{upw}}(e^I_H,e^I_H) 
\\
&\leq& a^\eps(e^I_H,e^I_H)+a_{\text{upw}}(e^I_H,e^I_H) 
\\
&\leq& a^\eps(u^\eps_{H,h},e^I_H) + a_{\text{upw}}(u^\eps_{H,h},e^I_H) - a^\eps(R_{H,h} u^\eps_h,e^I_H) - a_{\text{upw}}(R_{H,h} u^\eps_h,e^I_H) 
\\
&\leq& F(e^I_H) + F_{\text{stab}}(e^I_H) - a^\eps(R_{H,h} u^\eps_h,e^I_H) - a_{\text{upw}}(R_{H,h} u^\eps_h,e^I_H) 
\\
&\leq& a^\eps(u^\eps_h,e^I_H) + F_{\text{stab}}(e^I_H) - a^\eps(R_{H,h} u^\eps_h,e^I_H) - a_{\text{upw}}(R_{H,h} u^\eps_h,e^I_H), 
\end{eqnarray*}
making use of the variational formulation satisfied by $u^\eps_{H,h}$ and $u^\eps_h$, respectively. Using that $e^I=u^\eps_h-R_{H,h} u^\eps_h$, we next obtain
\begin{eqnarray}
& &\alpha_1|e^I_H|^2_{H^1(\Omega)} +a_{\text{upw}}(e^I_H,e^I_H) \nonumber
\\
&\leq& 
a^\eps(e^I,e^I_H) + F_{\text{stab}}(e^I_H) + a_{\text{upw}}(e^I,e^I_H) - a_{\text{upw}}(u^\eps_h,e^I_H) \nonumber
\\
&=& \int_\Omega \left( A^\eps \left(e^I\right)' \left(e^I_H\right)' - b\left(e^I_H\right)' e^I\right) + \sum_{\mathbf{K}\in\mathcal{T}_H} \bigg(\tau_{\mathbf{K}} f,b (e^I_H)' \bigg)_\mathbf{K} \nonumber
\\
&& + \sum_{\mathbf{K}\in\mathcal{T}_H} \bigg(\tau_{\mathbf{K}}b\left(e^I\right)' ,b\left(e^I_H\right)'\bigg)_\mathbf{K} 
- \sum_{\mathbf{K}\in\mathcal{T}_H} \bigg(\tau_{\mathbf{K}}b(u^ \eps_h)',b\left(e^I_H\right)'\bigg)_\mathbf{K} \nonumber
\\
&=& \int_\Omega \left( A^\eps \left(e^I\right)' \left(e^I_H\right)' - b\left(e^I_H\right)' e^I\right) + \sum_{\mathbf{K}\in\mathcal{T}_H} \bigg(\tau_{\mathbf{K}} \left( f - b(u^ \eps_h)' \right), b (e^I_H)' \bigg)_\mathbf{K} \nonumber
\\
&& + \sum_{\mathbf{K}\in\mathcal{T}_H} \bigg(\tau_{\mathbf{K}}b\left(e^I\right)' ,b\left(e^I_H\right)'\bigg)_\mathbf{K}. 
\label{err_detail_hrfm}
\end{eqnarray}
We now successively estimate each term of the right-hand side of~\eqref{err_detail_hrfm}. For the first part of the first term, we have
$$
\left| \int_\Omega A^\eps \left(e^I\right)' \left(e^I_H\right)' \right|
\leq
\int_\Omega\alpha_2\left|\left(e^I\right)'\left(e^I_H\right)'\right|\leq \frac{\alpha_1}{4}|e^I_H|_{H^1(\Omega)}^2 + \frac{\alpha_2^2}{\alpha_1} |e^I|_{H^1(\Omega)}^2.
$$
For the second part of the first term, we obtain
\begin{eqnarray*}
- \int_\Omega b\left(e^I_H\right)' e^I 
& \leq &
\frac{1}{4}\sum_{\mathbf{K}\in\mathcal{T}_H}\|\tau_\mathbf{K}^{1/2}b\left(e^I_H\right)'\|_{L^2(\mathbf{K})}^2+ \sum_{\mathbf{K}\in\mathcal{T}_H}\|\tau_\mathbf{K}^{-1/2}e^I\|_{L^2(\mathbf{K})}^2\\
&\leq& 
\frac{1}{4}\sum_{\mathbf{K}\in\mathcal{T}_H}\|\tau_\mathbf{K}^{1/2}b\left(e^I_H\right)'\|_{L^2(\mathbf{K})}^2 + \sum_{\mathbf{K}\in\mathcal{T}_H}\frac{2|b|}{H} H^2|e^I|_{H^1(\mathbf{K})}^2\\
&\leq &
\frac{1}{4}\sum_{\mathbf{K}\in\mathcal{T}_H}\|\tau_\mathbf{K}^{1/2}b\left(e^I_H\right)'\|_{L^2(\mathbf{K})}^2+ 2|b|H|e^I|_{H^1(\Omega)}^2,
\end{eqnarray*}
where, in the second line, we have used the value of $\tau_\mathbf{K}$ and a Poincar\'e inequality.

We bound the second term as follows:
\begin{eqnarray*}
&& \sum_{\mathbf{K}\in\mathcal{T}_H} \bigg(\tau_{\mathbf{K}} \left( f - b(u^\eps_h)' \right),b (e^I_H)' \bigg)_\mathbf{K}
\\
& \leq &
\sum_{\mathbf{K}\in\mathcal{T}_H} \frac{1}{2} \|\tau_\mathbf{K}^{1/2} \left( f - b(u^\eps_h)' \right) \|_{L^2(\mathbf{K})}^2 + \sum_{\mathbf{K}\in\mathcal{T}_H} \frac{1}{2} \|\tau_\mathbf{K}^{1/2} b\left(e^I_H\right)'\|_{L^2(\mathbf{K})}^2
\\ 
&\leq& 
\frac{H}{4|b|}\|f-b(u^\eps_h)'\|_{L^2(\Omega)}^2 + \sum_{\mathbf{K}\in\mathcal{T}_H}\frac{1}{2}\|\tau_\mathbf{K}^{1/2}b\left(e^I_H\right)'\|_{L^2(\mathbf{K})}^2
\\ 
&\leq& 
\frac{H}{2|b|} \left( \|f\|_{L^2(\Omega)}^2 + \|b(u^\eps_h)'\|_{L^2(\Omega)}^2 \right) + \sum_{\mathbf{K}\in\mathcal{T}_H}\frac{1}{2}\|\tau_\mathbf{K}^{1/2}b\left(e^I_H\right)'\|_{L^2(\mathbf{K})}^2
\\
&\leq& 
\frac{H^2}{4\alpha_1} \left( \|f\|_{L^2(\Omega)}^2 + \|b(u^\eps_h)'\|_{L^2(\Omega)}^2 \right) + \sum_{\mathbf{K}\in\mathcal{T}_H}\frac{1}{2}\|\tau_\mathbf{K}^{1/2}b\left(e^I_H\right)'\|_{L^2(\mathbf{K})}^2,
\end{eqnarray*}
where we have used the fact that $\dis\frac{|b|H}{2\alpha_1}\geq 1$ in the last line.

For the third term, we get, using the expression of $\tau_\mathbf{K}$,
\begin{eqnarray*}
&& \sum_{\mathbf{K}\in\mathcal{T}_H} \bigg(\tau_{\mathbf{\mathbf{K}}}b\left(e^I\right)',b \left(e^I_H\right)'\bigg)_\mathbf{K}
\\
&\leq& 
\sum_{\mathbf{K}\in\mathcal{T}_H}\|\tau_\mathbf{K}^{1/2}b\left(e^I\right)'\|_{L^2(\mathbf{K})}^2 + \sum_{\mathbf{K}\in\mathcal{T}_H} \frac{1}{4} \|\tau_\mathbf{K}^{1/2}b\left(e^I_H\right)'\|_{L^2(\mathbf{K})}^2
\\
&\leq& 
\frac{|b| H}{2}|e^I|_{H^1(\Omega)}^2 +\sum_{\mathbf{K}\in\mathcal{T}_H}\frac{1}{4}\|\tau_\mathbf{K}^{1/2}b\left(e^I_H\right)'\|_{L^2(\mathbf{K})}^2.
\end{eqnarray*}
Collecting the terms, we deduce from~\eqref{err_detail_hrfm} that
\begin{multline*}
\alpha_1|e^I_H|_{H^1(\Omega)}^2 
\leq 
\frac{\alpha_1}{4}|e^I_H|_{H^1(\Omega)}^2+\left(\frac{\alpha_2^2}{\alpha_1}+ 2|b|H + \frac{|b|H}{2}\right) |e^I|^2_{H^1(\Omega)} 
\\
+ \frac{H^2}{4\alpha_1} \| f \|^2_{L^2(\Omega)} + \frac{H^2|b|^2}{4\alpha_1}|u^\eps_h|^ 2_{H^1(\Omega)},
\end{multline*}
which yields 
\begin{align}
|e^I_H|_{H^1(\Omega)}\leq C \left[ \left(\frac{\alpha_2^2}{\alpha_1^2}+ \frac{|b|H}{\alpha_1}\right) |e^I|^2_{H^1(\Omega)} + \frac{H^2}{\alpha_1^2} \| f \|^2_{L^2(\Omega)} + \frac{H^2|b|^2}{\alpha_1^2}|u^\eps_h|^2_{H^1(\Omega)} \right]^{1/2},
\label{ineq_6}
\end{align}
where $C$ is a universal constant. Using Proposition~\ref{bound_HR}, we have 
$$
|u^\eps_h|_{H^1(\Omega)}
\leq 
|u^\eps|_{H^1(\Omega)}+|e^I_h|_{H^1(\Omega)}
\leq 
\frac{\sqrt{2\alpha_2L}}{\alpha_1\sqrt{|b|}} \ \|f\|_{L^2(\Omega)}+ |e^I_h|_{H^1(\Omega)},
$$
and we thus deduce from~\eqref{ineq_6} that
\begin{multline}
|e^I_H|_{H^1(\Omega)} \leq C \left[ \sqrt{\frac{\alpha_2^2}{\alpha_1^2} + \frac{|b|H}{\alpha_1}} \ |e^I|_{H^1(\Omega)} \right. \\
\left. + H \left( 1 + \frac{\sqrt{2\alpha_2L|b|}}{\alpha_1} \right) \frac{\| f \|_{L^2(\Omega)}}{\alpha_1} + \frac{H|b|}{\alpha_1}|e^I_h|_{H^1(\Omega)} \right].
\label{ineq_6_bis}
\end{multline}

\medskip

\noindent {\bf Step 4: conclusion.} 
Successively using the triangle inequality,~\eqref{ineq_6_bis}, \eqref{ineq_5} and~\eqref{ineq_4}, we obtain
\begin{eqnarray*}
&&|u^\eps-u^\eps_{H,h}|_{H^1(\Omega)}
\\
&\leq& 
|e^I_H|_{H^1(\Omega)}+|e^I|_{H^1(\Omega)}+|e^I_h|_{H^1(\Omega)}
\\
&\leq& 
\left(1+C\sqrt{\frac{\alpha_2^2}{\alpha_1^2}+ \frac{|b|H}{\alpha_1}}\right)|e^I|_{H^1(\Omega)} + \left(1+\frac{CH|b|}{\alpha_1}\right)|e^I_h|_{H^1(\Omega)} 
\\
&& \qquad + C H \left( 1 + \frac{\sqrt{2\alpha_2L|b|}}{\alpha_1} \right) \frac{\|f\|_{L^2(\Omega)}}{\alpha_1}
\\
&\leq& 
\left[1+\frac{CH|b|}{\alpha_1}+\frac{H|b|}{\alpha_1} \left( 1 + C \sqrt{\frac{\alpha_2^2}{\alpha_1^2}+ \frac{|b|H}{\alpha_1}} \right)\right]|e^I_h|_{H^1(\Omega)}
\\
&& + \left(1+C\sqrt{\frac{\alpha_2^2}{\alpha_1^2}+ \frac{|b|H}{\alpha_1}}\right) H\left(1+\frac{\sqrt{2\alpha_2L|b|}}{\alpha_1}\right)\frac{\|f\|_{L^2(\Omega)}}{\alpha_1}
\\
&& \qquad + C H \left( 1 + \frac{\sqrt{2\alpha_2L|b|}}{\alpha_1} \right) \frac{\|f\|_{L^2(\Omega)}}{\alpha_1}
\\
&\leq& 
C \left[1+\frac{H|b|}{\alpha_1}+\frac{H|b|}{\alpha_1} \sqrt{\frac{\alpha_2^2}{\alpha_1^2}+ \frac{|b|H}{\alpha_1}} \right] e_h
\\
& &
+ C H \left(1+\sqrt{\frac{\alpha_2^2}{\alpha_1^2}+ \frac{|b|H}{\alpha_1}}\right) \left(1+\frac{\sqrt{2\alpha_2L|b|}}{\alpha_1}\right)\frac{\|f\|_{L^2(\Omega)}}{\alpha_1}.
\end{eqnarray*}
This concludes the proof of Theorem~\ref{thm_HRFM_Bstabh}.
\end{proof}

\subsection{The \MsFEMA{} method}
\label{prelim_msfem-adv}

The error bound of the \MsFEMA{} method (introduced in Section~\ref{section_msfemA}) is given by the following theorem.

\begin{theorem}
Let $u^\eps$ be the solution to the one-dimensional problem~\eqref{pb_helene} and $u^\eps_H \in \VA$ be the solution to the \MsFEMA{} method. The following estimate holds:
$$
|u^\eps-u^\eps_H|_{H^1(\Omega)}\leq H \left(\sqrt{\frac{\alpha_2}{\alpha_1}} + \frac{|b|H}{\alpha_1}\right) \frac{\|f\|_{L^2(\Omega)}}{\alpha_1}.
$$
\label{thm_HR_A}
\end{theorem}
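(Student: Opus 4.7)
The plan is to mimic the decomposition strategy used in the proof of Theorem~\ref{thm_HR_B}, taking advantage of the fact that, in the $\MsFEMA{}$ case, the interpolant in $\VA$ satisfies the \emph{full} advection-diffusion operator in each coarse element, not just the purely diffusive part as in the $\MsFEMB{}$ analysis. I introduce the interpolant $R_H u^\eps \in \VA$ which coincides with $u^\eps$ at the coarse nodes, and split the error as $u^\eps - u^\eps_H = e^I + e^I_H$ with $e^I = u^\eps - R_H u^\eps$ and $e^I_H = R_H u^\eps - u^\eps_H$. The whole argument boils down to bounding $|e^I|_{H^1(\Omega)}$ and then invoking a Céa-type inequality.

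For the interpolation error, the crucial observation is that, by linearity and by definition~\eqref{local_problem_nos_msfemA} of the $\MsFEMA{}$ basis functions, $R_H u^\eps$ solves the full PDE $-(A^\eps (R_H u^\eps)')' + b\,(R_H u^\eps)' = 0$ on each element, so
$$
-\frac{d}{dx}\!\left(A^\eps (e^I)'\right) + b\,(e^I)' = f \ \ \text{in each } \mathbf{K}\in\mathcal{T}_H, \qquad e^I \in H^1_0(\mathbf{K}).
$$
Testing against $e^I$ itself, the advection contribution vanishes since $b$ is constant and $e^I$ is zero on $\partial\mathbf{K}$:
$$
\int_\mathbf{K} b\,(e^I)' e^I = \frac{b}{2}\int_\mathbf{K} \frac{d}{dx}\bigl((e^I)^2\bigr) = 0.
$$
Coercivity of $A^\eps$ combined with a local Poincaré inequality on each $\mathbf{K}$ (with constant bounded by $H$) yields $\alpha_1 |e^I|^2_{H^1(\Omega)} \leq H\,\|f\|_{L^2(\Omega)}\,|e^I|_{H^1(\Omega)}$, hence
$$
|e^I|_{H^1(\Omega)} \leq \frac{H}{\alpha_1}\,\|f\|_{L^2(\Omega)}.
$$
In contrast to the $\MsFEMB{}$ analysis, no invocation of Proposition~\ref{bound_HR} is needed here, because the interpolant absorbs the whole advective contribution directly; this is essentially why the bound to be proved is simpler and independent of $L$.

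To finish, I apply Proposition~\ref{a_cea_lemma} with $W_H = \VA$ and $v_H = R_H u^\eps$. In one dimension $\sqrt{b^T(A^\eps)^{-1}b} = |b|/\sqrt{A^\eps} \leq |b|/\sqrt{\alpha_1}$, so
$$
|u^\eps - u^\eps_H|_{H^1(\Omega)} \leq \sqrt{\frac{\alpha_2}{\alpha_1}}\,|e^I|_{H^1(\Omega)} + \frac{|b|}{\alpha_1}\,\|e^I\|_{L^2(\Omega)}.
$$
Since $e^I$ vanishes at each coarse node, an elementwise Poincaré inequality gives $\|e^I\|_{L^2(\Omega)} \leq H\,|e^I|_{H^1(\Omega)}$, and substituting the bound on $|e^I|_{H^1(\Omega)}$ obtained above produces the announced estimate. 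There is no genuine obstacle in this argument: the very construction of $\VA$ is such that $e^I$ solves the original PDE with right-hand side~$f$ in each element, which removes the potentially large $|u^\eps|_{H^1(\Omega)}$ factor that Theorem~\ref{thm_HR_B} had to control via Proposition~\ref{bound_HR}.
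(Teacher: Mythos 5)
Your proof is correct and follows exactly the pattern the paper intends: the paper omits the proof of Theorem~\ref{thm_HR_A}, stating only that it follows the proof of Theorem~\ref{thm_HR_B}, and your argument is precisely that proof with the key simplification correctly identified — since the \MsFEMA{} basis functions satisfy the full advection-diffusion operator locally, the residual equation for $e^I$ has right-hand side $f$ alone, the advection term vanishes when testing against $e^I\in H^1_0(\mathbf{K})$ with constant $b$, and neither Proposition~\ref{bound_HR} nor the hypothesis $|b|L/\alpha_2\geq 1$ is needed. The concluding step via Proposition~\ref{a_cea_lemma} and the elementwise Poincaré inequality matches the paper's~\eqref{ineq_9} and yields the stated bound.
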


The proof of this theorem follows the same pattern as the proof of Theorem~\ref{thm_HR_B}. We therefore skip it. 

\subsection{Splitting approach}
\label{prelim_splitting}

We now turn to the splitting method introduced in Section~\ref{section_splitting}. In contrast to Sections~\ref{prelim_msfem}, \ref{section_msfemB_SUPG_1D} and~\ref{prelim_msfem-adv}, we do not restrict ourselves to the one-dimensional setting. In what follows, we denote $C_\Omega$ the Poincar\'e constant as defined by $\| \varphi \|_{L^2(\Omega)} \leq C_\Omega | \varphi |_{H^1(\Omega)}$ for any $\varphi \in H^1_0(\Omega)$.

\subsubsection{The method~\eqref{pb_spl_1}--\eqref{pb_spl_2}}

\begin{lemma}
\label{lemma_conv}
Consider the splitting method~\eqref{pb_spl_1}--\eqref{pb_spl_2}. If 
\begin{align}
\frac{C_\Omega \|b\|_{L^\infty(\Omega)}}{\alpha_1}\left(\frac{\| A^\eps-\alpha_{\rm spl} \text{Id}\|_{L^\infty(\Omega)}}{\alpha_{\rm spl}}\right)<1,
\label{naive_cond}
\end{align}
then $u_{2n+1}$ converges in $H^1_0(\Omega)$ to $u^\eps$ solution to~\eqref{pb_multiscale}.
\end{lemma}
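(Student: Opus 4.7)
The plan is to establish a geometric contraction on successive differences of the iterates and then pass to the limit. For $n \geq 1$, introduce $d'_n = u_{2n} - u_{2n-2}$ and $d_n = u_{2n+1} - u_{2n-1}$, both in $H^1_0(\Omega)$. Subtracting~\eqref{pb_spl_1} written at ranks $n$ and $n-1$, and similarly for~\eqref{pb_spl_2}, yields the coupled system
\[
-\alpha_{\rm spl} \Delta d'_{n+1} + b \cdot \nabla d'_{n+1} = b \cdot \nabla (d'_n - d_n), \qquad -\text{div}\left(A^\eps \nabla d_{n+1}\right) = -\alpha_{\rm spl} \Delta d'_{n+1},
\]
posed with homogeneous Dirichlet boundary conditions. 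The goal is to show that $|d'_n|_{H^1(\Omega)}$ contracts at each step with ratio equal to the left-hand side of~\eqref{naive_cond}. Granted this, $|d_n|_{H^1(\Omega)}$ inherits the same geometric decay, so $(u_{2n})$ and $(u_{2n+1})$ are Cauchy in $H^1_0(\Omega)$ and converge to some $u_{\rm even}$ and $u_{\rm odd}$. Passing to the limit in~\eqref{pb_spl_1}--\eqref{pb_spl_2} and summing the two limit equations (as sketched in Section~\ref{section_splitting}) identifies $u_{\rm odd}$ with $u^\eps$ by well-posedness of~\eqref{pb_multiscale}.

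The first estimate is obtained by testing the advection-diffusion equation for $d'_{n+1}$ against $d'_{n+1}$ itself. The transport term on the left vanishes thanks to $\text{div } b = 0$ and the Dirichlet condition; on the right, an integration by parts (again using $\text{div } b = 0$), followed by Cauchy--Schwarz and Poincar\'e's inequality, produces
\[
\alpha_{\rm spl}\, |d'_{n+1}|_{H^1(\Omega)} \leq C_\Omega\, \|b\|_{L^\infty(\Omega)}\, |d'_n - d_n|_{H^1(\Omega)}.
\]

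The second estimate is the heart of the argument and is where the main obstacle lies: a direct energy estimate in the second equation only yields $|d_{n+1}|_{H^1(\Omega)} \leq (\alpha_{\rm spl}/\alpha_1)\, |d'_{n+1}|_{H^1(\Omega)}$, a ratio with no useful sign and hence unable to contract. The remedy is to rewrite the second equation as
\[
-\text{div}\left( A^\eps \nabla (d_{n+1} - d'_{n+1}) \right) = \text{div}\left( (A^\eps - \alpha_{\rm spl}\, \text{Id}) \nabla d'_{n+1} \right),
\]
on $H^1_0(\Omega)$. This rearrangement isolates the small quantity $A^\eps - \alpha_{\rm spl}\,\text{Id}$ in the source. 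Testing against $d_{n+1} - d'_{n+1}$ and invoking the ellipticity lower bound from~\eqref{condition_alpha} gives
\[
\alpha_1\, |d_{n+1} - d'_{n+1}|_{H^1(\Omega)} \leq \|A^\eps - \alpha_{\rm spl}\,\text{Id}\|_{L^\infty(\Omega)}\, |d'_{n+1}|_{H^1(\Omega)}.
\]
Combining the first estimate at rank $n+1$ with the second at rank $n$ delivers
\[
|d'_{n+1}|_{H^1(\Omega)} \leq \frac{C_\Omega\, \|b\|_{L^\infty(\Omega)}}{\alpha_1} \cdot \frac{\|A^\eps - \alpha_{\rm spl}\,\text{Id}\|_{L^\infty(\Omega)}}{\alpha_{\rm spl}}\, |d'_n|_{H^1(\Omega)},
\]
which is a strict contraction by assumption~\eqref{naive_cond} and concludes the proof.
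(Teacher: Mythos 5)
Your proof is correct and follows essentially the same route as the paper's: the same successive-difference quantities (your $d'_{n+1},d_{n+1}$ are the paper's $\widetilde{u}_{2n},\widetilde{u}_{2n+1}$), the same energy estimate on the advection step via $\operatorname{div}b=0$ and Poincar\'e, and the same key rearrangement isolating $A^\eps-\alpha_{\rm spl}\,\mathrm{Id}$ in the diffusion step to obtain the contraction factor of~\eqref{naive_cond}. No substantive differences to report.
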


\begin{proof}
Let $\widetilde{u}_n=u_{n+2}-u_{n}$. We reformulate the system~\eqref{pb_spl_1}--\eqref{pb_spl_2} as
\begin{align}
&\left\{\begin{aligned}
&-\alpha_{\rm spl}\Delta \widetilde{u}_{2n+2} + b\cdot\nabla \widetilde{u}_{2n+2} = b\cdot\nabla (\widetilde{u}_{2n}-\widetilde{u}_{2n+1}) \quad\text{ in }\Omega, \\
&\widetilde{u}_{2n+2}=0 \quad\text{ on }\partial\Omega ,
\end{aligned}\right.\label{pb_spl_zero_1}
\\
&\left\{\begin{aligned}
&-\text{div }(A^\eps\nabla \widetilde{u}_{2n+3})=-\alpha_{\rm spl}\Delta \widetilde{u}_{2n+2}\quad\text{ in }\Omega,\\
&\widetilde{u}_{2n+3}=0\quad\text{ on }\partial\Omega.
\end{aligned}\right.
\label{pb_spl_zero_2}
\end{align} 
Using the variational formulations of~\eqref{pb_spl_zero_1} and~\eqref{pb_spl_zero_2}, we have
\begin{align}
&\alpha_{\rm spl} |\widetilde{u}_{2n+2}|_{H^1(\Omega)}\leq C_\Omega \|b\|_{L^\infty(\Omega)}|\widetilde{u}_{2n}-\widetilde{u}_{2n+1}|_{H^1(\Omega)},\label{ineq_12}\\
&\alpha_1|\widetilde{u}_{2n+1}|_{H^1(\Omega)}\leq \alpha_{\rm spl}|\widetilde{u}_{2n}|_{H^1(\Omega)},\label{ineq_13}
\end{align} 
where we have used~\eqref{div_b_zero} and~\eqref{condition_alpha}. Letting $w_n=\widetilde{u}_{2n+1}-\widetilde{u}_{2n}$, we have
$$
-\text{div}(A^\eps\nabla w_n)=-\alpha_{\rm spl} \Delta \widetilde{u}_{2n} + \text{div}(A^\eps \nabla \widetilde{u}_{2n}) \ \ \text{in $\Omega$}, 
\qquad
w_n=0 \ \ \text{on $\partial\Omega$}.
$$
We deduce that 
\begin{align}
\alpha_1|w_n|_{H^1(\Omega)} \leq \| A^\eps-\alpha_{\rm spl} \text{Id} \|_{L^\infty(\Omega)} |\widetilde{u}_{2n}|_{H^1(\Omega)}.
\label{ineq_14}
\end{align}
Collecting~\eqref{ineq_12} and~\eqref{ineq_14}, we get
$$
|\widetilde{u}_{2n+2}|_{H^1(\Omega)} \leq \rho^{1+n}|\widetilde{u}_{0}|_{H^1(\Omega)},
$$
where 
$$
\rho = \frac{C_\Omega \|b\|_{L^\infty(\Omega)}}{\alpha_1}\left(\frac{\| A^\eps-\alpha_{\rm spl} \text{Id} \|_{L^\infty(\Omega)}}{\alpha_{\rm spl}}\right).
$$
Because of~\eqref{naive_cond}, the sequence $u_{2n}$ therefore converges in $H^1_0(\Omega)$ to some $u_{\text{even}}$. In view of~\eqref{ineq_13}, the sequence $u_{2n+1}$ also converges in $H^1_0(\Omega)$ to some $u_{\text{odd}}$. Passing to the limit $n \to \infty$ in~\eqref{pb_spl_1} and~\eqref{pb_spl_2}, we obtain that $u_{\text{even}}$ and $u_{\text{odd}}$ are the solutions to
\begin{align}
-\alpha_{\rm spl}\Delta u_{\text{even}} = f - b\cdot\nabla u_{\text{odd}} \quad\text{ in }\Omega, 
\quad
u_{\text{even}}=0 \quad\text{ on }\partial\Omega,
\label{pb_spl_lim_1d_1}
\\
-\text{div}(A^\eps\nabla u_{\text{odd}})=-\alpha_{\rm spl}\Delta u_{\text{even}}\quad\text{ in }\Omega,
\quad
u_{\text{odd}}=0\quad\text{ on }\partial\Omega.
\label{pb_spl_lim_1d_2}
\end{align}
Adding~\eqref{pb_spl_lim_1d_1} and~\eqref{pb_spl_lim_1d_2}, we get that $u_{\text{odd}}$ is actually the solution to~\eqref{pb_multiscale}.
\end{proof}

There are unfortunately simple situations where~\eqref{naive_cond} is not satisfied, whatever the choice of $\alpha_{\rm spl}$. Consider for instance the one-dimensional setting where $A^\eps$ is continuous. Then~$\| A^\eps-\alpha_{\rm spl} \text{Id} \|_{L^\infty(\Omega)} = \max(|a_+-\alpha_{\rm spl}|,|a_--\alpha_{\rm spl}|)$ where $\dis a_- = \inf_\Omega A^\eps$ and $\dis a_+ = \sup_\Omega A^\eps$. We observe that
\begin{align}
\rho\geq \rho_-,
\label{inf_rho}
\end{align}
where~$\dis \rho_- =\frac{C_\Omega \|b\|_{L^\infty(\Omega)}}{\alpha_1}\frac{a_+-a_-}{a_++a_-}$. If $\rho_- > 1$, then, for any $\alpha_{\rm spl}>0$, condition~\eqref{naive_cond} is not satisfied. Of course,~\eqref{naive_cond} is only a sufficient, and not a necessary condition for the convergence of the iterations. In most cases, and even in some cases when~\eqref{naive_cond} is not satisfied, the splitting method~\eqref{pb_spl_1}--\eqref{pb_spl_2} converges, see Section~\ref{section_accuracies}. In some cases, it does not. Lemma~\ref{example_explosion} below describes such a convergence failure, for a one-dimensional example that can be easily extended to higher dimensional settings using tensor products.

\begin{lemma}
\label{example_explosion}
Assume that $\Omega=(0,1)$, that~$A^\eps\equiv \alpha^\star$, that the initial guess for~\eqref{pb_spl_1}--\eqref{pb_spl_2} is $u_0=\cos(2\pi x)-1$ and $u_1$ the solution to~\eqref{pb_spl_2} with $u_0$ in the right-hand side. 
Take $\alpha^\star$ and $\alpha_{\rm spl}$ such that
\begin{align}
\frac{b}{\alpha_{\rm spl}}<\frac{b}{2\alpha^\star}-2\pi^2\frac{\alpha^\star}{b}.
\label{conv_div_spl}
\end{align}
Then the sequences $(u_{2n})_{n \in \N}$ and $(u_{2n+1})_{n \in \N}$ do not converge in $H^1_0(\Omega)$.
\end{lemma}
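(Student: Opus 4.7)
The plan is to recast the iteration as a closed linear recursion on the even-indexed iterates and exhibit a two-dimensional invariant subspace on which the resulting operator has spectral radius strictly greater than $1$. Since $A^\eps \equiv \alpha^\star$ is constant, equation~\eqref{pb_spl_2} reduces to $-\alpha^\star u_{2n+3}'' = -\alpha_{\rm spl}\,u_{2n+2}''$ with homogeneous Dirichlet boundary conditions, forcing $u_{2n+3}=(\alpha_{\rm spl}/\alpha^\star)\,u_{2n+2}$ for every $n\geq 0$. Since the prescribed $u_1$ is by assumption the solution of this identity applied to $u_0$, the relation $u_{2n+1}=(\alpha_{\rm spl}/\alpha^\star)\,u_{2n}$ holds from $n=0$ onward. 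Plugging this into~\eqref{pb_spl_1} and setting $\lambda:=1-\alpha_{\rm spl}/\alpha^\star$, one obtains the closed affine recursion
\[
u_{2n+2} \;=\; T\,u_{2n} + u_f,
\]
where $T:H^1_0(\Omega)\to H^1_0(\Omega)$ sends $g$ to the unique $h\in H^1_0(\Omega)$ solving $-\alpha_{\rm spl} h''+b\,h'=b\lambda\,g'$, and $u_f\in H^1_0(\Omega)$ solves $-\alpha_{\rm spl}\,u_f''+b\,u_f'=f$.

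Next, I would diagonalize $T$ on a relevant subspace. For each $k\in\Z\setminus\{0\}$ the function $e_k(x):=e^{2i\pi kx}-1$ lies in $H^1_0(\Omega)$ (it vanishes at $x=0$ and $x=1$), and direct substitution into the defining PDE of $T$ gives $T e_k=\xi_k e_k$ with
\[
\xi_k \;=\; \frac{b\lambda}{b-2i\pi k\alpha_{\rm spl}}, \qquad |\xi_k|^2 \;=\; \frac{b^2\lambda^2}{b^2+4\pi^2 k^2\alpha_{\rm spl}^2}.
\]
The modulus is maximal at $k=\pm 1$; set $r:=|\xi_{\pm 1}|$. Observing that the right-hand side of~\eqref{conv_div_spl} is positive only when $\alpha_{\rm spl}>2\alpha^\star$, which in turn gives $\lambda^2>1$, then squaring~\eqref{conv_div_spl} and using the identity $\lambda^2-1=\alpha_{\rm spl}(\alpha_{\rm spl}-2\alpha^\star)/(\alpha^\star)^2$, one verifies (by clearing denominators) that~\eqref{conv_div_spl} is exactly equivalent to $r>1$.

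Finally, the initial data $u_0=\cos(2\pi x)-1=\tfrac{1}{2}(e_1+e_{-1})$ lies in the complex subspace $\mathrm{span}\{e_1,e_{-1}\}$, which $T$ preserves and on which it acts diagonally. Taking $f=0$ (a bona fide instance of~\eqref{pb_multiscale} which already suffices to disprove unconditional convergence of the scheme), one has $u_f=0$, and writing $\xi_1=re^{i\theta}$,
\[
u_{2n} \;=\; T^n u_0 \;=\; \tfrac{1}{2}\bigl(\xi_1^n e_1 + \overline{\xi_1}^n e_{-1}\bigr) \;=\; \mathrm{Re}(\xi_1^n e_1) \;=\; r^n\bigl[\cos(2\pi x+n\theta)-\cos(n\theta)\bigr].
\]
A direct computation then yields
\[
|u_{2n}|_{H^1(\Omega)}^2 \;=\; 4\pi^2 r^{2n}\int_0^1\sin^2(2\pi x+n\theta)\,dx \;=\; 2\pi^2 r^{2n} \;\to\; +\infty,
\]
which prevents $(u_{2n})$ from converging in $H^1_0(\Omega)$; since $u_{2n+1}=(\alpha_{\rm spl}/\alpha^\star)\,u_{2n}$, the odd iterates diverge as well. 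The main technical point is the purely algebraic identification between~\eqref{conv_div_spl} and the spectral condition $r>1$; the compactness of $T$ (elliptic regularity together with Rellich--Kondrachov) and the simplicity of the eigenvalues $\xi_k$ would additionally allow, via Riesz projectors, the argument to be extended to every $f$ outside a codimension-two resonant set.
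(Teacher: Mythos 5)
Your proof is correct and follows essentially the same route as the paper's: both reduce the scheme (with $f=0$) to the linear recursion $u_{2n+2}=T u_{2n}$ induced by $-\alpha_{\rm spl}h''+bh'=b\lambda g'$ and show that the relevant spectral radius exceeds $1$ exactly under~\eqref{conv_div_spl}; your diagonalization on $\mathrm{span}\{e^{\pm 2i\pi x}-1\}$ with eigenvalues $\xi_{\pm 1}$ is just a complex change of basis for the paper's real $2\times 2$ matrix $\lambda A$ acting on the $(\cos,\sin)$ coefficients of $(u_{2n})'$, and indeed $r=\rho(\lambda A)$. Your explicit computation of $|u_{2n}|_{H^1(\Omega)}=\sqrt{2}\,\pi r^n$ and the closing remark on general $f$ are small refinements over the paper's presentation, not a different argument.
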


\begin{proof}
We may assume without loss of generality that $f=0$. In this particular setting, equation~\eqref{pb_spl_2} reads as $u_{2n+1}=(\alpha_{\rm spl}/\alpha^\star) \, u_{2n}$, so~\eqref{pb_spl_1} reduces to
$$
-(u_{2n+2})''+\frac{b}{\alpha_{\rm spl}}(u_{2n+2})' = \lambda (u_{2n})' \ \text{in $(0,1)$}, \quad u_{2n+2}(0)=u_{2n+2}(1)=0,
$$
where $\dis \lambda=\frac{b}{\alpha_{\rm spl}}\left(1-\frac{\alpha_{\rm spl}}{\alpha^\star}\right)$. A simple calculation shows that, for any $n\in\N$, $(u_{2n})'= c_n\cos(2\pi x) + s_n\sin(2\pi x)$, with $[c_n,s_n]^T=(-1)^n\lambda^n A^n[0,-2\pi]^T$ and
$$
A=\left[ \left(\frac{b}{\alpha_{\rm spl}}\right)^2+4\pi^2 \right]^{-1} \begin{pmatrix} -b/\alpha_{\rm spl} & -2\pi\\
2\pi & -b/\alpha_{\rm spl}
\end{pmatrix}.
$$
If $\rho(\lambda A)= \frac{|\lambda|}{\sqrt{\left(b/\alpha_{\rm spl}\right)^2+4\pi^2}}>1$, a condition which is equivalent to~\eqref{conv_div_spl}, then the sequence $(u_{2n})_{n \in \N}$ does not converge.
\end{proof}

\subsubsection{An alternate splitting method}

We now present an alternate splitting method, which includes some element of damping, and which, when the damping parameter (denoted by~$\beta$) is suitably adjusted, unconditionally converges. We emphasize however that we have observed in our numerical tests that the convergence of this alternate approach, although guaranteed theoretically, is much slower than that of the method~\eqref{pb_spl_1}--\eqref{pb_spl_2}. See Figure~\ref{test_10_ite_spl} below.

\medskip

The iterates $u_{2n+2}$ and $u_{2n+3}$ are now defined by
\begin{align}
&\left\{\begin{aligned}
&-(\beta + \alpha_{\rm spl})\Delta u_{2n+2} + b\cdot\nabla u_{2n+2} = f + b\cdot\nabla (u_{2n}-u_{2n+1})-\beta \Delta u_{2n+1}\quad\text{ in }\Omega, \\
&u_{2n+2}=0 \quad\text{ on }\partial\Omega ,
\end{aligned}\right.
\label{pb_spl_beta_1}
\\
&\left\{\begin{aligned}
&-\text{div }((\beta \text{Id}+A^\eps)\nabla u_{2n+3})=-(\beta+\alpha_{\rm spl})\Delta u_{2n+2}\quad\text{ in }\Omega,\\
&u_{2n+3}=0\quad\text{ on }\partial\Omega,
\end{aligned}\right.
\label{pb_spl_beta_2}
\end{align} 
with $\alpha_{\rm spl}>0$ and $\beta\geq 0$. Of course, $\beta=0$ yields~\eqref{pb_spl_1}--\eqref{pb_spl_2}.

The convergence of~\eqref{pb_spl_beta_1}--\eqref{pb_spl_beta_2} is established in the following lemma, in the infinite dimensional setting. The discretized, finite dimensional version will be studied next.

\begin{lemma}
\label{lemma_conv_beta}
Choose 
\begin{align}
\beta = \underset{x\geq 0}{\arg\!\min} \left(\frac{C_\Omega \|b\|_{L^\infty(\Omega)}}{x+\alpha_{\rm spl}}\frac{\| A^\eps-\alpha_{\rm spl} \text{Id}\|_{L^\infty(\Omega)}}{x+\alpha_1}+\frac{x}{x+\alpha_{1}}\right),
\label{beta_choice}
\end{align}
where $\alpha_1$ is such that~\eqref{condition_alpha} holds. Then $u_{2n+1}$ converges in $H^1_0(\Omega)$ to $u^\eps$ solution to~\eqref{pb_multiscale}.
\end{lemma}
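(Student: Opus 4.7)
The plan is to mimic the proof of Lemma~\ref{lemma_conv}, but with the key new ingredient being that the damping term $\beta\,(-\Delta)$ in both~\eqref{pb_spl_beta_1} and~\eqref{pb_spl_beta_2} provides an extra degree of freedom that can be tuned to make the contraction factor strictly less than~$1$, regardless of the data. I define $\widetilde{u}_n = u_{n+2}-u_n$ and subtract the equations~\eqref{pb_spl_beta_1}--\eqref{pb_spl_beta_2} at two consecutive iteration levels to obtain the homogeneous counterparts
\begin{align*}
-(\beta+\alpha_{\rm spl})\Delta\widetilde{u}_{2n+2}+b\cdot\nabla\widetilde{u}_{2n+2}
&= b\cdot\nabla(\widetilde{u}_{2n}-\widetilde{u}_{2n+1})-\beta\Delta\widetilde{u}_{2n+1},\\
-\text{div}\big((\beta\,\text{Id}+A^\eps)\nabla\widetilde{u}_{2n+3}\big)
&= -(\beta+\alpha_{\rm spl})\Delta\widetilde{u}_{2n+2},
\end{align*}
together with homogeneous Dirichlet data.

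Next, I test the first equation with $\widetilde{u}_{2n+2}$, use $\text{div}\,b=0$ together with Cauchy--Schwarz and the Poincar\'e inequality, and I test the second with $\widetilde{u}_{2n+3}$ using the ellipticity of $\beta\,\text{Id}+A^\eps$ (which has lower bound $\beta+\alpha_1$). This yields the two basic estimates
\begin{equation*}
(\beta+\alpha_{\rm spl})\,|\widetilde{u}_{2n+2}|_{H^1(\Omega)}
\leq C_\Omega\|b\|_{L^\infty(\Omega)}\,|\widetilde{u}_{2n}-\widetilde{u}_{2n+1}|_{H^1(\Omega)}
+\beta\,|\widetilde{u}_{2n+1}|_{H^1(\Omega)},
\end{equation*}
\begin{equation*}
(\beta+\alpha_1)\,|\widetilde{u}_{2n+3}|_{H^1(\Omega)}\leq (\beta+\alpha_{\rm spl})\,|\widetilde{u}_{2n+2}|_{H^1(\Omega)}.
\end{equation*}
Following Lemma~\ref{lemma_conv}, I then set $w_n=\widetilde{u}_{2n+1}-\widetilde{u}_{2n}$; using~\eqref{pb_spl_beta_2} (or rather its homogeneous counterpart) I observe that $w_n$ solves
\begin{equation*}
-\text{div}\big((\beta\,\text{Id}+A^\eps)\nabla w_n\big)=\text{div}\big((A^\eps-\alpha_{\rm spl}\,\text{Id})\nabla\widetilde{u}_{2n}\big),
\end{equation*}
so that $(\beta+\alpha_1)\,|w_n|_{H^1(\Omega)}\leq \|A^\eps-\alpha_{\rm spl}\,\text{Id}\|_{L^\infty(\Omega)}\,|\widetilde{u}_{2n}|_{H^1(\Omega)}$.

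Combining these three inequalities (inserting the estimate on $w_n$ and the shifted estimate on $\widetilde{u}_{2n+1}$ into the bound on $\widetilde{u}_{2n+2}$) gives $|\widetilde{u}_{2n+2}|_{H^1(\Omega)}\leq \rho(\beta)\,|\widetilde{u}_{2n}|_{H^1(\Omega)}$, with
\begin{equation*}
\rho(\beta)=\frac{C_\Omega\|b\|_{L^\infty(\Omega)}}{\beta+\alpha_{\rm spl}}\,\frac{\|A^\eps-\alpha_{\rm spl}\,\text{Id}\|_{L^\infty(\Omega)}}{\beta+\alpha_1}+\frac{\beta}{\beta+\alpha_1},
\end{equation*}
which is exactly the function minimized in~\eqref{beta_choice}. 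The main obstacle, and the real content of the lemma, is then to verify that $\rho(\beta^\star)<1$ for the minimizer~$\beta^\star$: rewriting the inequality $\rho(\beta)<1$ in the equivalent form $C_\Omega\|b\|_{L^\infty(\Omega)}\|A^\eps-\alpha_{\rm spl}\,\text{Id}\|_{L^\infty(\Omega)}<\alpha_1(\beta+\alpha_{\rm spl})$, one sees that any $\beta>\beta_0:=C_\Omega\|b\|_{L^\infty(\Omega)}\|A^\eps-\alpha_{\rm spl}\,\text{Id}\|_{L^\infty(\Omega)}/\alpha_1-\alpha_{\rm spl}$ works, so that $\rho(\beta^\star)\leq\rho(\max(0,\beta_0+1))<1$ unconditionally.

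Finally, from the contraction $|\widetilde{u}_{2n+2}|_{H^1(\Omega)}\leq \rho(\beta^\star)^{n+1}|\widetilde{u}_0|_{H^1(\Omega)}$ I infer that the sequence $(u_{2n})_{n\in\N}$ is Cauchy, hence converges in $H^1_0(\Omega)$ to some $u_{\text{even}}$; the estimate on $\widetilde{u}_{2n+3}$ then yields the convergence of $(u_{2n+1})_{n\in\N}$ to some $u_{\text{odd}}$. Passing to the limit in~\eqref{pb_spl_beta_1}--\eqref{pb_spl_beta_2}, the damping terms $\beta(u_{2n+1}-u_{2n+1})$ and $\beta(\text{Id})\nabla(u_{2n+3}-u_{2n+3})$ become inert at the limit (they cancel by continuity), so $u_{\text{even}},u_{\text{odd}}$ satisfy the same limit system~\eqref{pb_spl_lim_1}--\eqref{pb_spl_lim_2} as in Lemma~\ref{lemma_conv}, and adding the two equations identifies $u_{\text{odd}}$ as the solution to~\eqref{pb_multiscale}.
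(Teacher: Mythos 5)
Your proof is correct and follows essentially the same route as the paper's: the same quantities $\widetilde{u}_n$ and $w_n$, the same three energy estimates, and the same contraction factor $\rho(\beta)=g(\beta)$; the only variation is that you verify $\min_{x\geq 0}g(x)<1$ by solving $g(\beta)<1$ explicitly (equivalently $\alpha_1(\beta+\alpha_{\rm spl})>C_\Omega\|b\|_{L^\infty(\Omega)}\|A^\eps-\alpha_{\rm spl}\,\text{Id}\|_{L^\infty(\Omega)}$), whereas the paper uses the expansion $g(x)=1-\alpha_1/x+O(1/x^2)$ — both are valid. One cosmetic slip: the limit system is not literally \eqref{pb_spl_lim_1}--\eqref{pb_spl_lim_2} but its $\beta$-augmented version, with $-\beta\Delta u_{\text{odd}}$ appearing on both sides; these extra terms cancel only once the two limit equations are added, which is all that is needed to conclude that $u_{\text{odd}}$ solves \eqref{pb_multiscale}.
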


\begin{proof}
Following the arguments of the proof of Lemma~\ref{lemma_conv}, we have
\begin{align}
|\widetilde{u}_{2n+2}|_{H^1(\Omega)}&\leq\frac{C_\Omega \|b\|_{L^\infty(\Omega)}}{\beta+\alpha_{\rm spl}}|\widetilde{u}_{2n}-\widetilde{u}_{2n+1}|_{H^1(\Omega)} + \frac{\beta}{\beta+\alpha_{\rm spl}}|\widetilde{u}_{2n+1}|_{H^1(\Omega)},\label{ineq_19} \\
|\widetilde{u}_{2n+1}|_{H^1(\Omega)}&\leq \frac{\beta + \alpha_{\rm spl}}{\beta + \alpha_1}|\widetilde{u}_{2n}|_{H^1(\Omega)},\label{ineq_20} \\
|w_n|_{H^1(\Omega)}&\leq \frac{\| A^\eps-\alpha_{\rm spl} \text{Id}\|_{L^\infty(\Omega)}}{\beta+\alpha_1} |\widetilde{u}_{2n}|_{H^1(\Omega)},
\label{ineq_21}
\end{align}
where we recall that $\widetilde{u}_n = u_{n+2} - u_n$ and $w_n = \widetilde{u}_{2n+1} - \widetilde{u}_{2n}$.

Collecting~\eqref{ineq_19}, \eqref{ineq_20} and~\eqref{ineq_21}, we have 
$$
|\widetilde{u}_{2n+2}|_{H^1(\Omega)}\leq \rho|\widetilde{u}_{2n}|_{H^1(\Omega)},
$$
where $\rho = g(\beta)$ and where the function $g$ is defined by
$$
g(x) = \frac{C_\Omega \|b\|_{L^\infty(\Omega)}}{x+\alpha_{\rm spl}}\frac{\| A^\eps-\alpha_{\rm spl}\text{Id}\|_{L^\infty(\Omega)}}{x+\alpha_1}+\frac{x}{x+\alpha_{1}}. 
$$
We next observe that $\dis g(x) = 1 - \frac{\alpha_1}{x} + O\left(\frac{1}{x^2}\right)$. Since $\alpha_1>0$, this implies that $\dis \min_{x \geq 0} g(x) < 1$. In view of~\eqref{beta_choice}, we have $\dis \rho = g(\beta) = \min_{x \geq 0} g(x) < 1$. We next conclude the proof mimicking the argument in the proof of Lemma~\ref{lemma_conv}.
\end{proof}

\medskip

We now consider the discrete case. Given the approximations~$u_{2n}^H$ and~$u_{2n+1}^H$, we define $u_{2n+2}^H$ and $u_{2n+3}^H$ as follows. First, we discretize~\eqref{pb_spl_beta_1} on a coarse mesh and use the SUPG terms to stabilize the approach. We hence define $u_{2n+2}^H$ by the following variational formulation:
\begin{align}
&\text{Find $u^H_{2n+2}\in V_H$ such that, for any $v\in V_H$,}\nonumber\\
&a_1(u_{2n+2}^H,v) + a_{\text{conv}}(u_{2n+2}^H,v)
= \widetilde{F}^1(v)+\widetilde{F}_{\text{stab}}(v) + a_{\text{conv}}(P_{\VB}(u_{2n}^H),v),
\label{pb_spl_beta_discret_1_discrete_varf}
\end{align}
where we recall that $V_H$ is the $\mathbb{P}^1$ finite element space, and where
\begin{eqnarray}
a_1(u,v) 
&=& 
\int_\Omega (\beta + \alpha_{\rm spl})\nabla u\cdot\nabla v,
\label{eq:def_a1}
\\
a_{\text{conv}}(u,v) 
&=& 
\int_\Omega (b\cdot\nabla u) \, v + \sum_{\mathbf{K}\in\mathcal{T}_H} (\tau_\mathbf{K} b\cdot \nabla u,b\cdot\nabla v)_{L^2(\mathbf{K})},
\label{eq:def_aconv}
\\
\widetilde{F}^1(v) 
&=& 
\int_\Omega (f - b\cdot \nabla u_{2n+1}^{H})v +\int_\Omega \beta \nabla u_{2n+1}^{H}\cdot\nabla v, 
\nonumber
\\
\widetilde{F}_{\text{stab}}(v) 
&=& 
\sum_{\mathbf{K}\in\mathcal{T}_H} (\tau_\mathbf{K}(f - b\cdot \nabla u_{2n+1}^{H}),b\cdot\nabla v)_{L^2(\mathbf{K})}.
\nonumber
\end{eqnarray}
In~\eqref{pb_spl_beta_discret_1_discrete_varf}, $\dis P_{\VB}$ is the projector on the space $\VB$ defined as follows. For any $v \in H^1_0(\Omega)$, we define $P_{\VB}(v) \in \VB$ by
\begin{equation}
\label{projection_VB_varf}
\forall w \in \VB, \quad a_1 \Big( P_{\VB}(v),w \Big) = a_1(v,w).
\end{equation}

Second, we discretize~\eqref{pb_spl_beta_2} using the \MsFEMB{} approach: we define $u_{2n+3}^H$ by the following variational formulation:
\begin{align}
\text{Find $u^H_{2n+3} \in \VB$ such that, for any $w\in \VB$},
\ \
a_2(u_{2n+3}^H,w) = a_1(u_{2n+2}^H,w),
\label{pb_spl_beta_discret_2_discrete_varf}
\end{align}
where
\begin{equation}
\label{eq:def_a2}
a_2(u,v) = \int_\Omega (\nabla v)^T (\beta \text{Id} + A^\eps)\nabla u.
\end{equation}

\begin{remark}
\label{remark_introduction_projection}
Three remarks on~\eqref{pb_spl_beta_discret_1_discrete_varf} are in order. First, the term $-\beta \Delta u_{2n+1}^H$ is absent from~$\widetilde{F}_{\text{stab}}$ only because, as we use a $\mathbb{P}^1$ approach, that term identically vanishes in each element $\mathbf{K}$. Second, as already mentioned in Section~\ref{section_splitting}, the computation of $\widetilde{F}^1(v)$ needs to be performed on a fine mesh, since~$u_{2n+1}^H$ belongs to the \MsFEMB{} space $\VB$. Third, the introduction of the projector $\dis P_{\VB}$ in~\eqref{pb_spl_beta_discret_1_discrete_varf} is motivated by the need to guarantee the convergence of the iterations~\eqref{pb_spl_beta_discret_1_discrete_varf}--\eqref{pb_spl_beta_discret_2_discrete_varf} to an accurate approximation of the solution $u^\eps$ to the reference problem~\eqref{pb_multiscale}. Lemma~\ref{lem:iter_alternative} below will clarify and establish this convergence. Note that, instead of~\eqref{projection_VB_varf}, we could as well have defined $\dis P_{\VB}(v) \in \VB$, for any $v \in H^1_0(\Omega)$, by the relation $\dis a_2 \Big( P_{\VB}(v),w \Big) = a_2(v,w)$ for any $w \in \VB$.
\end{remark}

\medskip

We establish in Appendix~\ref{sec:appC} below the convergence of~\eqref{pb_spl_beta_discret_1_discrete_varf}--\eqref{pb_spl_beta_discret_2_discrete_varf}. Formally passing to the limit $n \to \infty$ in~\eqref{pb_spl_beta_discret_1_discrete_varf}--\eqref{pb_spl_beta_discret_2_discrete_varf}, we observe that, if $(u_{2n}^H,u_{2n+1}^H)$ converges to some $(u_{\text{even}}^H,u_{\text{odd}}^H) \in V_H\times \VB$, then $(u_{\text{even}}^H,u_{\text{odd}}^H)$ satisfies
\begin{multline}
\forall v\in V_H, \quad
a_1(u^H_{\text{even}},v) + a_{\text{conv}}(u^H_{\text{even}},v)
\\ = \widetilde{F}^1(v;u_{\text{odd}}^H)+\widetilde{F}_{\text{stab}}(v;u_{\text{odd}}^H) + a_{\text{conv}}(P_{\VB}(u^H_{\text{even}}),v),
\label{pb_spl_beta_discret_lim_1_discrete_varf}
\end{multline}
and
\begin{align}
\forall w\in \VB, \quad
a_2(u^H_{\text{odd}},w) = a_1(u^H_{\text{even}},w),
\label{pb_spl_beta_discret_lim_2_discrete_varf}
\end{align}
with
$\dis \widetilde{F}^1(v;u_{\text{odd}}^H) = \int_\Omega (f - b\cdot \nabla u_{\text{odd}}^H)v +\int_\Omega \beta \nabla u_{\text{odd}}^{H}\cdot\nabla v$
and
$\dis \widetilde{F}_{\text{stab}}(v;u_{\text{odd}}^H) = \sum_{\mathbf{K}\in\mathcal{T}_H} (\tau_\mathbf{K}(f - b\cdot \nabla u_{\text{odd}}^H),b\cdot\nabla v)_{L^2(\mathbf{K})}$. This convergence is rigorously stated in Lemma~\ref{lem:iter_alternative} below, as well as the convergence when $H \to 0$.

\begin{lemma}
\label{lem:iter_alternative}
Suppose that we set the stabilization parameter to
$$
\tau_{\mathbf{K}}(x)=\frac{H}{2|b(x)|} \quad\text{for any }\mathbf{K}\in \mathcal{T}_H.
$$
Choose 
\begin{align}
\beta = \underset{x\geq 0}{\arg\!\min} \left[ \left(C_\Omega+\frac{H}{2}\right)\frac{\|b\|_{L^\infty(\Omega)}}{x+\alpha_{\rm spl}}\frac{\| A^\eps-\alpha_{\rm spl}\text{Id}\|_{L^\infty(\Omega)}}{x+\alpha_1}+\frac{x}{x+\alpha_{1}}\right]
\label{beta_choice_discrete}
\end{align}
where $\alpha_1$ is such that~\eqref{condition_alpha} holds.

Then, when $n \to \infty$, $(u_{2n}^H,u_{2n+1}^H)$ converges in $H^1_0(\Omega)\times H^1_0(\Omega)$ to $(u_{\text{even}}^H,u_{\text{odd}}^H) \in V_H\times \VB$ solutions to the variational formulation~\eqref{pb_spl_beta_discret_lim_1_discrete_varf}--\eqref{pb_spl_beta_discret_lim_2_discrete_varf}.

Assume in addition that $A^\eps \in W^{1,\infty}(\Omega)$ and that
\begin{equation}
\label{eq:hyp_spl}
\alpha_{\rm spl} < 2 \alpha_1.
\end{equation}
Then, when $H \to 0$, $u_{\text{odd}}^H$ converges in $H^1_0(\Omega)$ to $u^\eps$ solution to~\eqref{pb_multiscale}.
\end{lemma}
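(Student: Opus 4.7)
The plan is to prove the two claims separately. Step~1 adapts the argument of Lemma~\ref{lemma_conv_beta} to the finite-dimensional discrete setting: for fixed $H$, I would show the iterates are Cauchy. Step~2 is a passage to the limit $H\to 0$ in the coupled problem~\eqref{pb_spl_beta_discret_lim_1_discrete_varf}--\eqref{pb_spl_beta_discret_lim_2_discrete_varf}: I would derive uniform-in-$H$ a priori bounds, extract weakly convergent subsequences, and identify the limit as $u^\eps$.

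\medskip

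\textbf{Step~1.} Setting $\widetilde{u}_n^H=u^H_{n+2}-u^H_n$ and differencing~\eqref{pb_spl_beta_discret_1_discrete_varf}--\eqref{pb_spl_beta_discret_2_discrete_varf}, the source $f$ cancels. Testing the difference of~\eqref{pb_spl_beta_discret_1_discrete_varf} with $\widetilde{u}_{2n+2}^H\in V_H$ and using the coercivity of $a_1$ together with the nonnegativity of $a_{\mathrm{conv}}(v,v)=\sum_{\mathbf{K}}(\tau_\mathbf{K} b\cdot\nabla v,b\cdot\nabla v)_\mathbf{K}$ (the remaining $\int(b\cdot\nabla v)v$ vanishes by integration by parts since $b$ is divergence-free and $v=0$ on~$\partial\Omega$), combined with the $H^1$-stability of $P_{\VB}$ inherited from~\eqref{projection_VB_varf}, yields the discrete analog of~\eqref{ineq_19}. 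Differencing~\eqref{pb_spl_beta_discret_2_discrete_varf} and using the ellipticity of $\beta\mathrm{Id}+A^\eps$ gives the analogs of~\eqref{ineq_20}--\eqref{ineq_21}. The one novelty compared to Lemma~\ref{lemma_conv_beta} is the treatment of the cross term $a_{\mathrm{conv}}(P_{\VB}(\widetilde{u}_{2n}^H)-\widetilde{u}_{2n+1}^H,\widetilde{u}_{2n+2}^H)$: its zeroth-order part produces the factor $C_\Omega\|b\|_{L^\infty(\Omega)}$ as in the continuous analysis, while the SUPG contribution $\sum_{\mathbf{K}}(\tau_\mathbf{K}b\cdot\nabla w,b\cdot\nabla \widetilde{u}_{2n+2}^H)_\mathbf{K}$ contributes an additional $H/2$ since $|\tau_\mathbf{K} b|\leq H/2$; this is precisely the extra $H/2$ appearing in~\eqref{beta_choice_discrete}. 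The resulting contraction factor is exactly the function $g$ minimized in~\eqref{beta_choice_discrete}, and since $g(x)\to 1^-$ at infinity, its minimum over $x\geq 0$ is strictly less than $1$. The choice of $\beta$ in~\eqref{beta_choice_discrete} then makes the iterates geometrically Cauchy in the finite-dimensional space $V_H\times\VB$, and passage to the limit in~\eqref{pb_spl_beta_discret_1_discrete_varf}--\eqref{pb_spl_beta_discret_2_discrete_varf} is immediate.

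\medskip

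\textbf{Step~2.} The formal heuristic is that as $H\to 0$ both $V_H$ and $\VB$ become dense in $H^1_0(\Omega)$, so $P_{\VB}$ converges to the identity and the SUPG parameter $\tau_\mathbf{K}=H/(2|b|)$ vanishes. Replacing $P_{\VB}$ by the identity in~\eqref{pb_spl_beta_discret_lim_1_discrete_varf} and discarding the SUPG terms, the limiting coupled system reads $-(\beta+\alpha_{\mathrm{spl}})\Delta u_{\text{even}}^\infty=f-b\cdot\nabla u_{\text{odd}}^\infty-\beta\Delta u_{\text{odd}}^\infty$ together with $-\mathrm{div}((\beta\mathrm{Id}+A^\eps)\nabla u_{\text{odd}}^\infty)=-(\beta+\alpha_{\mathrm{spl}})\Delta u_{\text{even}}^\infty$; substituting the former into the latter gives $-\mathrm{div}(A^\eps\nabla u_{\text{odd}}^\infty)+b\cdot\nabla u_{\text{odd}}^\infty=f$, i.e.\ $u_{\text{odd}}^\infty=u^\eps$. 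To make this rigorous, I would first derive uniform-in-$H$ bounds: testing~\eqref{pb_spl_beta_discret_lim_2_discrete_varf} with $w=u_{\text{odd}}^H$ gives $(\beta+\alpha_1)|u_{\text{odd}}^H|_{H^1(\Omega)}\leq (\beta+\alpha_{\mathrm{spl}})|u_{\text{even}}^H|_{H^1(\Omega)}$, while testing~\eqref{pb_spl_beta_discret_lim_1_discrete_varf} with $v=u_{\text{even}}^H$ and using the previous bound, the $H^1$-stability of $P_{\VB}$ and the Poincar\'e inequality yields an estimate of the form $|u_{\text{even}}^H|_{H^1(\Omega)}\leq C\|f\|_{L^2(\Omega)}$ with $C$ independent of $H$. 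This is where the hypothesis $\alpha_{\mathrm{spl}}<2\alpha_1$ enters: it is used to ensure that after absorbing the cross terms produced by $a_{\mathrm{conv}}(P_{\VB}(u_{\text{even}}^H),u_{\text{even}}^H)$ and by the coupling with $u_{\text{odd}}^H$, the remaining leading coefficient is strictly positive. Weak $H^1_0$-compactness then produces a subsequential limit $(u_{\text{even}}^\infty,u_{\text{odd}}^\infty)$. The key convergence fact is that $P_{\VB}(v)\to v$ strongly in $H^1$ for every fixed $v\in H^1_0(\Omega)$, which follows from the best-approximation characterization of $P_{\VB}$ and the density of $\bigcup_H\VB$ in $H^1_0(\Omega)$—this is where $A^\eps\in W^{1,\infty}(\Omega)$ is used, via the standard MsFEM interpolation estimate. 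Passing to the limit in the variational equations as in a Galerkin argument (the SUPG contributions vanishing since $\tau_\mathbf{K}\to 0$) identifies $u_{\text{odd}}^\infty=u^\eps$, and uniqueness for~\eqref{pb_multiscale} gives convergence of the whole sequence.

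\medskip

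The main obstacle is the passage to the limit in Step~2. The projector $P_{\VB}$ in~\eqref{pb_spl_beta_discret_lim_1_discrete_varf} couples the two discrete spaces $V_H$ and $\VB$ in a non-standard way, and showing that the difference $P_{\VB}(u_{\text{even}}^H)-u_{\text{even}}^H$ converges to zero strongly enough to be passed through the convection bilinear form $a_{\mathrm{conv}}$—even though $u_{\text{even}}^H$ itself depends on $H$—is the technical heart of the argument, and forces both the regularity $A^\eps\in W^{1,\infty}(\Omega)$ (to control the MsFEM approximation error) and the smallness condition $\alpha_{\mathrm{spl}}<2\alpha_1$ (to make the a priori bounds uniform in $H$).
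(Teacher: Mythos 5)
Your Step~1 follows the paper's argument essentially verbatim (difference the iterations, use the coercivity of $a_1$ and $a_2$, the projection property~\eqref{projection_VB_varf} to control $w_n=P_{\VB}(\widetilde{u}_{2n}^H)-\widetilde{u}_{2n+1}^H$, and the extra $H/2$ coming from the SUPG term), and is correct. The problem is Step~2.

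The genuine gap is your claimed uniform-in-$H$ a priori bound. Testing~\eqref{pb_spl_beta_discret_lim_1_discrete_varf} with $v=u^H_{\text{even}}$ does \emph{not} yield $|u^H_{\text{even}}|_{H^1(\Omega)}\leq C\|f\|_{L^2(\Omega)}$: the right-hand side contains the terms $-\int_\Omega (b\cdot\nabla u^H_{\text{odd}})\,u^H_{\text{even}}$ and $\int_\Omega \big(b\cdot\nabla P_{\VB}(u^H_{\text{even}})\big)u^H_{\text{even}}$, which are only bounded by $C_\Omega\|b\|_{L^\infty(\Omega)}|u^H_{\text{even}}|_{H^1(\Omega)}^2$ (up to constants), and in the advection-dominated regime $\|b\|_{L^\infty(\Omega)}$ is not controlled by $\beta+\alpha_{\rm spl}$. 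The hypothesis $\alpha_{\rm spl}<2\alpha_1$ does not help here; in the paper it only serves to make the coefficient $\alpha_1-\alpha_{\rm spl}/2$ positive after absorbing the \emph{diffusive} cross terms. The coupled bilinear form $c_H$ satisfies only a G\aa rding inequality: one must add $\lambda\|u\|^2_{L^2(\Omega)}$ with $\lambda\gtrsim \|b\|^2_{L^\infty(\Omega)}/\alpha_{\rm spl}$ (see~\eqref{eq:assum_sauter} and~\eqref{eq:coercivite}) to obtain coercivity. The paper therefore does not proceed by direct energy estimates; it establishes a uniform discrete inf-sup condition~\eqref{eq:coer4} by a compactness--contradiction argument \`a la Schatz (adapting~\cite[Theorem 4.2.9]{sauter}), which is where the density of the spaces $\VB$ (Appendix~\ref{sec:appB}, requiring $A^\eps\in W^{1,\infty}(\Omega)$) and the weak convergence $P_{\VBn}(u^{H_n}_{\text{even}})-u^{H_n}_{\text{even}}\rightharpoonup 0$ are actually exploited. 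Without this inf-sup condition (or some substitute), your compactness extraction has no bounded sequence to start from.

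A second, smaller gap: even granting uniform bounds, weak subsequential compactness plus uniqueness of the limit delivers only \emph{weak} $H^1$ convergence, whereas the lemma asserts convergence in $H^1_0(\Omega)$ (strong). The paper obtains strong convergence through the first Strang lemma~\cite[Lemma 2.27]{ern2004theory} (estimate~\eqref{eq:coer10}), bounding the error by the best-approximation error and the consistency errors coming from $a_{\text{conv}}(g-P_{\VB}(g),\cdot)$ and the SUPG terms, all of which vanish as $H\to 0$. You would need to supply an analogous argument to upgrade your weak limit to the claimed strong convergence.
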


The proof of Lemma~\ref{lem:iter_alternative} is postponed until Appendix~\ref{sec:appC}. 

\section{Numerical simulations}
\label{section_numerical_simulations}

In this section, we present and discuss our numerical experiments. They have all been performed using FreeFem++~\cite{MR3043640}. Our aim is to compare the four approaches of Section~\ref{section_our_four_appr}. Section~\ref{test_case} collects some preliminary material. Then we assess the accuracy and computational cost of our four numerical methods in Sections~\ref{section_accuracies} and~\ref{section_cost}, respectively.

\subsection{Test case} 
\label{test_case}
We work on the domain $\Omega=(0,1)^2$, discretized with a uniform coarse mesh $\mathcal{T}_H$ of size $H$. Let $V_H$ be the finite dimensional vector space~\eqref{VH-space} associated to the classical $\mathbb{P}^1$ discretization. In~\eqref{pb_multiscale}, we set $b=(1,1)^T$, $f=1$ and
$$
A^\eps(x_1,x_2)=\alpha \left(1+\delta\cos\left(\frac{2\pi}{\eps}x_1\right)\right)\text{Id}_2, \quad\text{with $\alpha$, $\delta>0$}.
$$
We recall that the convection-dominated regime is defined by the condition $\text{Pe}\, H>1$, where we define here the global P\'eclet number Pe of problem~\eqref{pb_multiscale} by~\eqref{global-peclet}. Here this regime corresponds to
\begin{align}
\alpha <\frac{H}{2}.
\label{test_conv_dom}
\end{align}
In this regime, the solution exhibits the boundary layer~$\Omega_{\text{layer}} = \Big( (0,1)\times(1-\delta_{\text{layer}},1)\Big) \cup \Big((1-\delta_{\text{layer}},1) \times (0,1)\Big)$, represented on Figure~\ref{omega_layer}, of approximate width $\displaystyle \delta_{\text{layer}}=\frac{1}{\text{Pe}}\log(\text{Pe})$. 

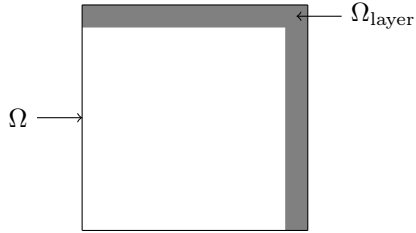
\begin{figure}[htbp]
\begin{center}
\begin{tikzpicture}[scale=3]
\fill [gray] (0.9,0) -- (1,0) -- (1,1) -- (0,1) -- (0,0.9) -- (0.9,0.9) -- cycle;
\draw (0,0) -- (1,0);
\draw (1,0) -- (1,1);
\draw (1,1) -- (0,1);
\draw (0,1) -- (0,0);
\draw [->] (-0.2,0.5) -- (0,0.5);
\draw (-0.2,0.5) node[left] {$\Omega$};
\draw [<-] (0.95,0.95) -- (1.15,0.95);
\draw (1.15,0.95) node[right] {$\Omega_{\text{layer}}$};
\end{tikzpicture}
\end{center}
\caption{The domain $\Omega$ and the boundary layer $\Omega_{\text{layer}}$}
\label{omega_layer}
\end{figure}

We choose for the splitting method the value $\alpha_{\rm spl}=\alpha$. Motivated by the one-dimensional formula~\eqref{formule_tauk_1d}, the stabilization parameter $\tau_\mathbf{K}$ is chosen as
$$
\tau_\mathbf{K}(x)=\frac{|\mathbf{K}|}{2|b(x)|}\left(\coth\left(\text{Pe}_\mathbf{K}(x)\right)-\left(\text{Pe}_\mathbf{K}(x)\right)^{-1}\right) \quad\text{ for all }\mathbf{K}\in\mathcal{T}_H,
$$
where $\dis \text{Pe}_\mathbf{K}(x)= \frac{|b(x)| \, H}{2\alpha}$.

\subsubsection{Evaluation of the accuracy}
\label{section_evaluation_accuracy}

Let $\mathcal{T}_h$ be a uniform fine mesh of $\Omega$ of size $h$ such that $\mathcal{T}_h$ is a refinement of $\mathcal{T}_H$. The reference solution~$u_{\text{ref}}$ is obtained by the standard $\mathbb{P}^1$ finite element discretization on $\mathcal{T}_h$ where $h$ is such that
$$
h\leq \frac{1}{16}\min(\eps,\delta_{\text{layer}}) \quad\text{and }\text{Pe}\, h\leq \frac{1}{4\sqrt{2}}<1.
$$
This condition ensures that the fine mesh can both resolve the oscillations throughout the domain at scale $\eps$ of the solution and the details within the boundary layer. It also ensures that, at scale $h$, the problem is not convection-dominated. This fine mesh is also that on which the local problems are solved, in order to determine the MsFEM basis functions.

In the sequel, the accuracies of the methods are compared using the following relative errors: $e_{H^1_{\text{in}}}(u_1)=\frac{\|u_1-u_{\text{ref}}\|_{H^1(\Omega_{\text{layer}})}}{\|u_{\text{ref}}\|_{H^1(\Omega)}}$ inside the boundary layer, and likewise~$e_{H^1_{\text{out}}}(u_1)=\frac{\|u_1-u_{\text{ref}}\|_{H^1(\Omega\setminus\Omega_{\text{layer}})}}{\|u_{\text{ref}}\|_{H^1(\Omega)}}$ outside that layer, and, in the whole domain, $e_{L^p}(u_1)=\frac{\|u_1-u_{\text{ref}}\|_{L^p(\Omega)}}{\|u_{\text{ref}}\|_{L^p(\Omega)}}$ for $p=2$ or $p=\infty$, and $e_{H^1}(u_1)=\frac{\|u_1-u_{\text{ref}}\|_{H^1(\Omega)}}{\|u_{\text{ref}}\|_{H^1(\Omega)}}$. All these relative errors are computed on the fine mesh $\mathcal{T}_h$.

\subsubsection{Evaluation of the computational costs}

The sizes of the local and global problems in the test cases we consider in Section~\ref{section_accuracies} are sufficiently small to allow for the use of direct linear solvers (in our case, the UMFPACK library). This clearly favors the splitting method as opposed to the other approaches, since that method is potentially the most expensive one of all four in its online stage. The factorization of the stiffness matrices is performed once and for all in the offline stage and is repeatedly used in the iterative process during the online stage. When, for problems of larger sizes, iterative linear solvers are in order, the online cost of the splitting method correspondingly increases. To evaluate this marginal cost, we have also performed tests using iterative solvers \emph{as if} the problem sizes were large. We have used either, for non-symmetric matrices, the GMRES solver and a value of the stopping criterion equal to $10^{-11}$, or, for symmetric matrices, the conjugate gradient method with a stopping criterion at $10^{-20}$. Both solvers are used with a simple diagonal preconditioner. The computations have all been performed on a Intel$\textregistered$ Xeon$\textregistered$ Processor E5-2667 v2. The specific function used to measure the CPU time is clock\_gettime() with the clock CLOCK\_PROCESS\_CPUTIME\_ID. 

\subsection{Accuracies}
\label{section_accuracies}

\pgfplotscreateplotcyclelist{mylistmethods}{
{blue,mark=*},{red,mark=square*}, {brown!60!black,every mark/.append style={fill=brown!80!black},mark=otimes*},{dashdotted,mark=star},{mark=diamond*,loosely dashdotted,color=green}}

\subsubsection{Reference test}
\label{an_example}

We first consider problem~\eqref{pb_multiscale}, with the choices of $A^\eps$ and $b$ described in Section~\ref{test_case}, and the parameters $\alpha=1/128$, $\delta=0.5$ and $H=1/16$. Since Pe$\, H=4$, the problem is expected to be convection-dominated and, for~$\eps=1/64$, multiscale. 

In order to practically check that the dominating convection is a challenge to standard approaches, we temporarily set $\eps$ to one, and compare the results obtained by the $\mathbb{P}^1$ method and the $\mathbb{P}^1$ Upwind method~\cite{hughes1979multidimensional}. Table~\ref{tabl_eps_1} shows that, outside the boundary layer, the relative $H^1$ error of the $\mathbb{P}^1$ method is approximately 20 times as large as the error of the $\mathbb{P}^1$ Upwind method. This confirms the convection-dominated regime. 

\begin{table}[htbp]
\begin{center}
\begin{tabular}{l | l l l l l}
	&	$e_{L^2}$	&	\textbf{$e_{H^1}$}	&	\textbf{$e_{L^\infty}$}	&	\textbf{$e_{H^1_{\text{in}}}$}	&	\textbf{$e_{H^1_{\text{out}}}$}	\\ \noalign{\vskip 3pt}
\hline					
$\mathbb{P}^1$	&	0.24	&	1.08 	&	0.69	&	0.90	&	0.58	\\
$\mathbb{P}^1$ Upwind		&	0.21 	&	0.85	&	0.57	&	0.84	&	0.03
\end{tabular}
\caption{Relative errors in the single-scale case ($\alpha=1/128$, $\delta=0.5$, $\eps=1$ and $H=1/16$)
\label{tabl_eps_1}}
\end{center}
\end{table}

Likewise, in order to practically demonstrate the relevance of accounting for the small scale, we reinstate $\eps=1/64$ and display on Table~\ref{tabl1} the relative errors for the different methods. We indeed observe that, outside the boundary layer, the relative $H^1$ error of the $\mathbb{P}^1$ Upwind method is about three times as large as the error of the \MsFEMBSUPG{} method. 

\medskip

We now compare the accuracies of the methods. The results are shown on Table~\ref{tabl1}. We observe that all methods have an outrageously large error within the boundary layer (close to a hundred percent). The only exception to this is discussed in Section~\ref{change_bc} below, where we focus on the boundary layer and show that, specifically for the \MsFEMA{} method but not for the other methods, the accuracy (within the layer) is significantly improved upon changing the boundary conditions in the local problem~\eqref{local_problem_nos_msfemA}. 

As shown on Table~\ref{tabl1}, the \MsFEMA{} method has a relative $H^1$ error outside the layer about 7 times as large as the error of the \MsFEMBSUPG{} method. On this example, the methods that provide the lowest $H^1$ error outside the layer are the \MsFEMBSUPG{} method and the splitting method.

\begin{table}[htbp]
\begin{center}
\begin{tabular}{l | l l l l l}
	&	$e_{L^2}$	&	\textbf{$e_{H^1}$}	&	\textbf{$e_{L^\infty}$}	&	\textbf{$e_{H^1_{\text{in}}}$}	&	\textbf{$e_{H^1_{\text{out}}}$}	\\ \noalign{\vskip 3pt}	
\hline
 $\mathbb{P}^1$ Upwind		&	0.86	&	0.98	&	0.94	&	0.97	&	0.13	\\	
\MsFEMB		&	0.27	&	1.13	&	1.63	&	0.97	&	0.57	\\	
\MsFEMBSUPG	&	0.23	&	0.87	&	0.81	&	0.87	&	0.04	\\
\MsFEMA		&	0.11	&	0.74	&	0.62	&	0.68	&	0.29	\\
Splitting~\eqref{pb_spl_1}--\eqref{pb_spl_2} &	0.22	&	0.87	&	0.80	&	0.87	&	0.03	
\end{tabular}
\caption{Relative errors in the multiscale case ($\alpha=1/128$, $\delta=0.5$, $\eps=1/64$ and $H=1/16$)
\label{tabl1}}
\end{center}
\end{table}

\subsubsection{Comparison of the splitting methods}

We specifically compare here our two variants of the splitting approach:~\eqref{pb_spl_1_discrete_varf}--\eqref{pb_spl_2_discrete_varf} and~\eqref{pb_spl_beta_discret_1_discrete_varf}--\eqref{pb_spl_beta_discret_2_discrete_varf}.

In spite of the value of $\rho_-=128$ in~\eqref{inf_rho}, so that assumption~\eqref{naive_cond} of Lemma~\ref{lemma_conv} is violated, the method~\eqref{pb_spl_1_discrete_varf}--\eqref{pb_spl_2_discrete_varf} converges. For the approach~\eqref{pb_spl_beta_discret_1_discrete_varf}--\eqref{pb_spl_beta_discret_2_discrete_varf}, we choose $\beta$ as in~\eqref{beta_choice_discrete}, that is $\beta = 1.9941$. In the numerical tests, we have not used the projection $P_{\VB}$ (see Remark~\ref{remark_introduction_projection}). The contraction factor (see~\eqref{ineq_29} in the proof of Lemma~\ref{lem:iter_alternative}) is $\rho = 0.99902$. Given the proof of Lemma~\ref{lem:iter_alternative} and that value of $\rho$, the convergence is expected to be slow. It is indeed \emph{very} slow, as will now be seen, confirming that the approach is only advocated in the case where the convergence of~\eqref{pb_spl_1_discrete_varf}--\eqref{pb_spl_2_discrete_varf} fails. 

Table~\ref{tabl4} shows the accuracy of the methods. We see that the method~\eqref{pb_spl_1_discrete_varf}--\eqref{pb_spl_2_discrete_varf} is more accurate than the method~\eqref{pb_spl_beta_discret_1_discrete_varf}--\eqref{pb_spl_beta_discret_2_discrete_varf} (outside the layer). Both methods are inaccurate inside the layer. Figure~\ref{test_10_ite_spl} shows the error in function of the number of iterations. The method~\eqref{pb_spl_beta_discret_1_discrete_varf}--\eqref{pb_spl_beta_discret_2_discrete_varf} needs 100 times more iterations than the method~\eqref{pb_spl_1_discrete_varf}--\eqref{pb_spl_2_discrete_varf} to reach the same accuracy. 

\begin{table}[htbp]
\begin{center}
\begin{tabular}{l | l l l l l}
	&	$e_{L^2}$	&	\textbf{$e_{H^1}$}	&	\textbf{$e_{L^\infty}$}	&	\textbf{$e_{H^1_{\text{in}}}$}	&	\textbf{$e_{H^1_{\text{out}}}$}	\\ \noalign{\vskip 3pt}	
\hline
Method~\eqref{pb_spl_1_discrete_varf}--\eqref{pb_spl_2_discrete_varf}	&	0.22	&	0.87	&	0.80	&	0.87	&	0.03	\\
Method~\eqref{pb_spl_beta_discret_1_discrete_varf}--\eqref{pb_spl_beta_discret_2_discrete_varf} & 0.59 & 0.94 & 0.96 & 0.94 & 0.10\\
\end{tabular}
\caption{Relative errors for the two splitting methods ($\alpha=1/128$, $\delta=0.5$, $\eps=1/64$ and $H=1/16$)
\label{tabl4}}
\end{center}
\end{table}

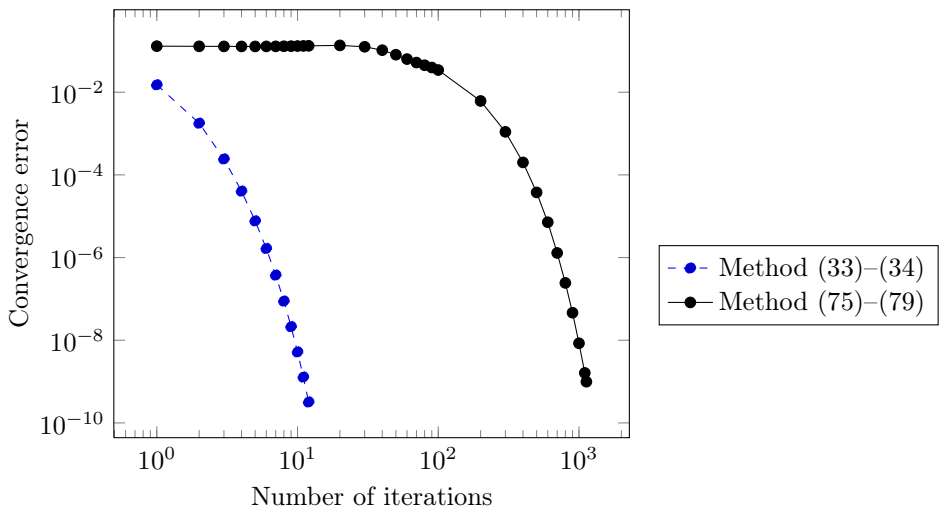
\begin{figure}[htbp]
\begin{center}
\begin{tikzpicture}
\begin{axis}[xlabel={Number of iterations},ylabel={Convergence error},legend entries={
{Method~\eqref{pb_spl_1_discrete_varf}--\eqref{pb_spl_2_discrete_varf}},{Method~\eqref{pb_spl_beta_discret_1_discrete_varf}--\eqref{pb_spl_beta_discret_2_discrete_varf}}},legend style={at={(1.6,0.45)}},xmode=log,ymode=log]
\addplot+[blue,dashed] table[x index=0,y index=1]{data/test_10_ite_spl-nos.dat};
\addplot[black,mark=*] table[x index=0,y index=1]{data/test_10_ite_spl_beta-nos.dat};
\end{axis}
\end{tikzpicture}
\end{center}
\caption{Convergence history of the two splitting methods ($\alpha=1/128$, $\delta=0.5$, $\eps=1/64$ and $H=1/16$)}
\label{test_10_ite_spl}
\end{figure}

In all what follows, we have only used the splitting method~\eqref{pb_spl_1_discrete_varf}--\eqref{pb_spl_2_discrete_varf}.

\subsubsection{Sensitivity with respect to the P\'eclet Number}
\label{section_sensitivity_peclet}

We set $\delta = 0.75$, $\eps=1/128$, $H=1/16$ and $\alpha=2^{-k}$, $k=2,\cdots,9$. We let~$\alpha$ vary in order to assess the robustness of the approaches with respect to the P\'eclet number. 

From~\eqref{test_conv_dom}, we suspect the convection-dominated regime corresponds to $k>5$. To doublecheck this is indeed the case, we first set $\eps=1$ and show on Figure~\ref{curve1_H1_eps_1} the relative errors of the $\mathbb{P}^1$ method and the $\mathbb{P}^1$ Upwind method. We indeed see that, for $k>5$, the relative $H^1$ error outside the layer of the $\mathbb{P}^1$ method is at least five times as large as the relative $H^1$ error outside the layer of the $\mathbb{P}^1$ Upwind method. In the sequel, we go back to the multiscale case with $\eps=1/128$.

\begin{figure}[htbp]
\begin{center}
\begin{tikzpicture}
\begin{axis}[extra x ticks={0.03125} , extra x tick labels={\color{red}Pe$\, H=1$},
extra x tick style={grid=major, dashed, tick label style={rotate=90,anchor=east}}, xlabel={$\alpha$},ylabel={$H^1$ relative error},legend entries={
{$\mathbb{P}^1: e_{H^1}$ },{$\mathbb{P}^1: e_{H^1_{\text{out}}}$},{$\mathbb{P}^1$ Upwind: $e_{H^1}$},{$\mathbb{P}^1$ Upwind: $e_{H^1_{\text{out}}}$}},cycle list name=mylistmethods,legend style={at={(1.6,0.45)}},xmode=log,ymode=log]
\addplot table[x index=0 ,y index=1]{data/curve1_eps_1};
\addplot+[blue,dashed] table[x index=0,y index=2]{data/curve1_eps_1};
\addplot[black,mark=*] table[x index=0,y index=3]{data/curve1_eps_1};
\addplot table[x index=0,y index=4]{data/curve1_eps_1};
\end{axis}
\end{tikzpicture}
\end{center}
\caption{Relative errors in the single-scale case ($\delta = 0.75$, $\eps=1$ and $H=1/16$).}
\label{curve1_H1_eps_1}
\end{figure}
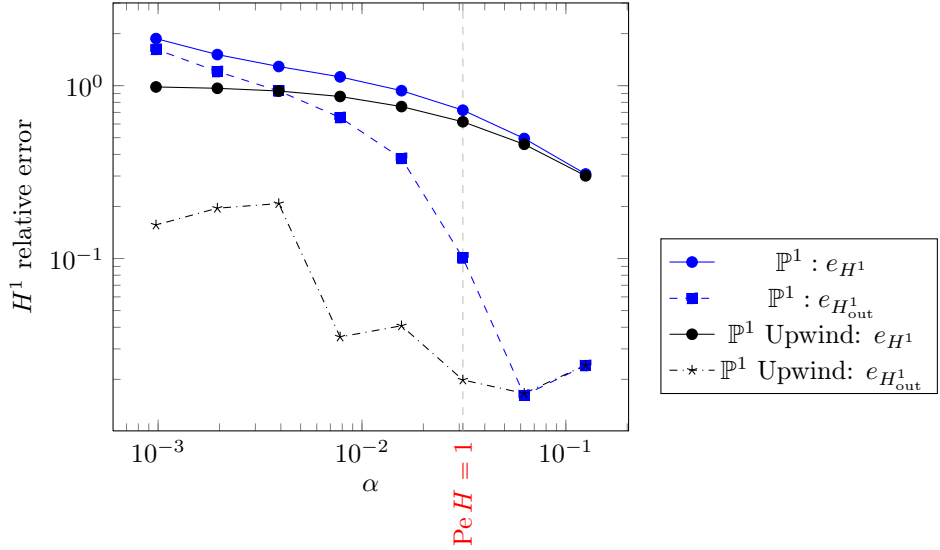

\pgfplotscreateplotcyclelist{mylist}{{red,mark=square*},{purple,mark=*},{violet,mark=o},{magenta,mark=+}}

\pgfplotscreateplotcyclelist{mylistmethodspone}{
{blue,dashed,mark=*},
{blue,mark=*},{red,mark=square*}, {brown!60!black,every mark/.append style={fill=brown!80!black},mark=otimes*},{dashdotted,mark=star},{mark=diamond*,loosely dashdotted,color=green}}

\paragraph*{Errors in the whole domain} 

Results are shown on Figure~\ref{curve1_H1_nos}. When $\alpha$ is small, all methods yield rather large errors. The error of the \MsFEMB{} method is significantly more important than the error of the \MsFEMBSUPG{} method. This indicates the presence of spurious oscillations on the solution obtained with the non stabilized \MsFEMB{} method. Hence the stabilization is important in this regime. The most robust methods are the \MsFEMA{} method, the \MsFEMBSUPG{} method and the splitting method. 

When $\alpha$ is large, the main difficulty is to capture the oscillations at scale $\eps$. As expected, all multiscale methods perform better than the $\mathbb{P}^1$ Upwind method. Note that the \MsFEMB{} method and the \MsFEMBSUPG{} method perform similarly. No stabilization is indeed necessary in that regime. 

In both regimes, we note that the \MsFEMA{} method performs the best. We also see that the errors of the splitting method are extremely close to the errors of the \MsFEMBSUPG{} method.

\begin{figure}[htbp]
\begin{center}
\begin{tikzpicture}
\begin{axis}[extra x ticks={0.03125} , extra x tick labels={\color{red}Pe$\, H=1$},
extra x tick style={grid=major, dashed, tick label style={rotate=90,anchor=east}}, xlabel={$\alpha$},ylabel={$H^1$ relative error},legend entries={
{$\mathbb{P}^1$ Upwind},{\MsFEMA},{\MsFEMB},{\MsFEMBSUPG},{Splitting}},cycle list name=mylistmethods,legend style={at={(1.6,0.45)}},xmode=log,ymode=log]
\addplot table[x index=0 ,y index=1]{data/curve1H1};
\addplot table[x index=0,y index=2]{data/curve1H1};
\addplot table[x index=0,y index=3]{data/curve1H1};
\addplot table[x index=0,y index=4]{data/curve1H1};
\addplot table[x index=0,y index=5]{data/curve1H1};
\end{axis}
\end{tikzpicture}
\end{center}
\caption{Relative error $e_{H^1}$ ($\delta = 0.75$, $\eps=1/128$ and $H=1/16$).}
\label{curve1_H1_nos}
\end{figure}
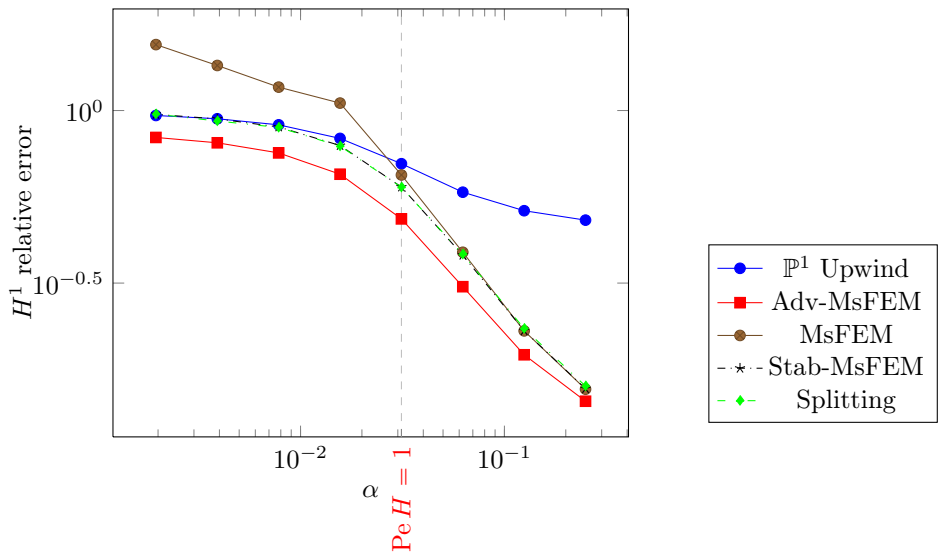

\paragraph{Errors outside the boundary layer}

It may be observed on Figure~\ref{curve1_H1out_nos} that the \MsFEMBSUPG{} method and the splitting method are the best methods outside the boundary layer. They essentially share the same accuracy. On the other hand, the \MsFEMA{} solution is systematically less accurate than the \MsFEMBSUPG{} solution. This suggests that encoding the convection in the multiscale basis functions is not necessary to obtain a good accuracy in this subdomain, and that it may even deteriorate the quality of the numerical solution. The \MsFEMB{} method is much less accurate than the stabilized \MsFEMBSUPG{} method when the coercivity constant $\alpha$ is small, and has a comparable accuracy when $\alpha$ is larger than $0.1$. 

When $\alpha$ is large (and hence the only difficulty is to capture the oscillation scale $\eps$), the $\mathbb{P}^1$ Upwind method is less accurate than the \MsFEMBSUPG{} method, as expected, since the latter encodes the oscillations of $A^\eps$ in the multiscale basis functions. When $\alpha$ is moderately small ($10^{-2} < \alpha < 1/32$ on Figure~\ref{curve1_H1out_nos}), the problem is both convection-dominated (we indeed observe that the \MsFEMBSUPG{} method provides a better accuracy than the \MsFEMB{} method) and multiscale (the \MsFEMBSUPG{} method is more accurate than the $\mathbb{P}^1$ Upwind method). However, when $\alpha$ is very small (here, $\alpha < 10^{-2}$), the convection is so large that it overshadows the multiscale nature of the problem. We then observe that the $\mathbb{P}^1$ Upwind method and the \MsFEMBSUPG{} method share the same accuracy. 

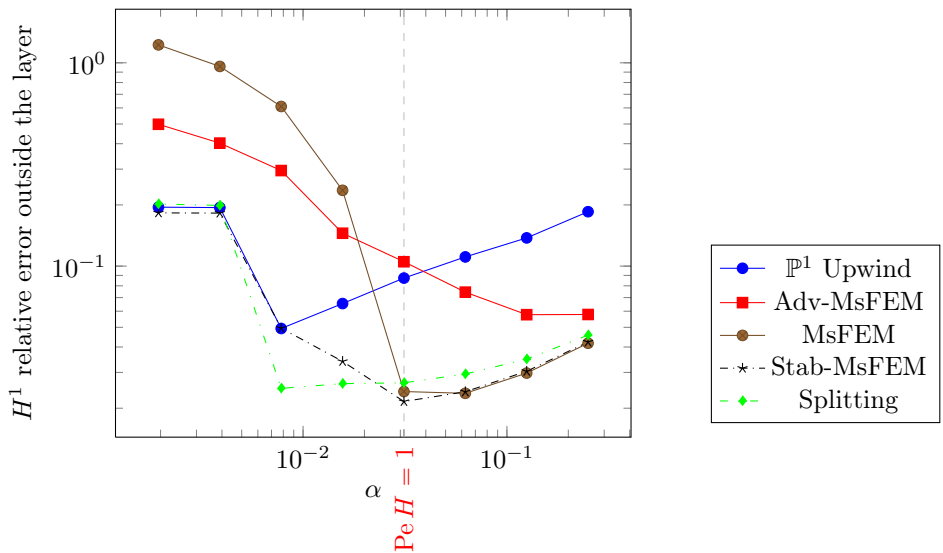
\begin{figure}[htbp]
\begin{center}
\begin{tikzpicture}
\begin{axis}[extra x ticks={0.03125} , extra x tick labels={\color{red}Pe$\, H=1$},
extra x tick style={grid=major,red, dashed, tick label style={rotate=90,anchor=east}}, xlabel={$\alpha$},ylabel={$H^1$ relative error outside the layer},legend entries={
{$\mathbb{P}^1$ Upwind},{\MsFEMA},{\MsFEMB},{\MsFEMBSUPG},{Splitting}},cycle list name=mylistmethods,legend style={at={(1.6,0.45)}},xmode=log,ymode=log]
\addplot table[x index=0 ,y index=1]{data/curve1H1out};
\addplot table[x index=0,y index=2]{data/curve1H1out};
\addplot table[x index=0,y index=3]{data/curve1H1out};
\addplot table[x index=0,y index=4]{data/curve1H1out};
\addplot table[x index=0,y index=5]{data/curve1H1out};
\end{axis}
\end{tikzpicture}
\end{center}
\caption{Relative error $e_{H^1_{\text{out}}}$ ($\delta = 0.75$, $\eps=1/128$ and $H=1/16$). }
\label{curve1_H1out_nos}
\end{figure}

Of course, the values of $\alpha$ that define these three regimes ((i) convection-dominated, (ii) {\em both} convection-dominated and multiscale, (iii) multiscale) depend on the problem considered, and in particular on the value of $\eps$. We have cheched this sensitivity by considering the following two test-cases: on Figures~\ref{fig:grand_eps} and~\ref{fig:petit_eps}, we consider the case $\eps=1/64$ and $\eps=1/256$, respectively. The other parameters are $\delta=0.75$ and $H=1/32$. When $\eps=1/256$, we observe that the \MsFEMBSUPG{} method is more accurate than the $\mathbb{P}^1$ Upwind method for any $1/512 \leq \alpha \leq 1/4$. In contrast, when $\eps=1/64$, the $\mathbb{P}^1$ Upwind method and the \MsFEMBSUPG{} method perform equally well when $\alpha \leq 1/256$. The sensitivity with respect to $\eps$ is also investigated in Section~\ref{sec:sen_eps} below (see e.g. Figure~\ref{curve2_H1out_nos}).

\begin{figure}[htbp]
\begin{center}
\begin{tikzpicture}
\begin{axis}[extra x ticks={0.015625} , extra x tick labels={\color{red}Pe$\, H=1$},
extra x tick style={grid=major,red, dashed, tick label style={rotate=90,anchor=east}}, xlabel={$\alpha$},ylabel={$H^1$ relative error outside the layer},legend entries={
{$\mathbb{P}^1$},
{$\mathbb{P}^1$ Upwind},{\MsFEMA},{\MsFEMB},{\MsFEMBSUPG}
},cycle list name=mylistmethodspone,legend style={at={(1.6,0.45)}},xmode=log,ymode=log]
\addplot table[x index=0 ,y index=6]{courbes_francois_15oct15/sources_4nov15/test_119_H1out_H_32.asc};
\addplot table[x index=0 ,y index=1]{courbes_francois_15oct15/sources_4nov15/test_119_H1out_H_32.asc};
\addplot table[x index=0,y index=2]{courbes_francois_15oct15/sources_4nov15/test_119_H1out_H_32.asc};
\addplot table[x index=0,y index=3]{courbes_francois_15oct15/sources_4nov15/test_119_H1out_H_32.asc};
\addplot table[x index=0,y index=4]{courbes_francois_15oct15/sources_4nov15/test_119_H1out_H_32.asc};
\end{axis}
\end{tikzpicture}
\end{center}
\caption{Relative error $e_{H^1_{\text{out}}}$ ($\delta = 0.75$, $\eps=1/64$ and $H=1/32$).}
\label{fig:grand_eps}
\end{figure}

\begin{figure}[htbp]
\begin{center}
\begin{tikzpicture}
\begin{axis}[extra x ticks={0.015625} , extra x tick labels={\color{red}Pe$\, H=1$},
extra x tick style={grid=major,red, dashed, tick label style={rotate=90,anchor=east}}, xlabel={$\alpha$},ylabel={$H^1$ relative error outside the layer},legend entries={
{$\mathbb{P}^1$},
{$\mathbb{P}^1$ Upwind},{\MsFEMA},{\MsFEMB},{\MsFEMBSUPG}
},cycle list name=mylistmethodspone,legend style={at={(1.6,0.45)}},xmode=log,ymode=log]
\addplot table[x index=0 ,y index=5]{courbes_francois_15oct15/sources_4nov15/test_120_H1out_H_32.asc};
\addplot table[x index=0 ,y index=1]{courbes_francois_15oct15/sources_4nov15/test_120_H1out_H_32.asc};
\addplot table[x index=0,y index=2]{courbes_francois_15oct15/sources_4nov15/test_120_H1out_H_32.asc};
\addplot table[x index=0,y index=3]{courbes_francois_15oct15/sources_4nov15/test_120_H1out_H_32.asc};
\addplot table[x index=0,y index=4]{courbes_francois_15oct15/sources_4nov15/test_120_H1out_H_32.asc};
\end{axis}
\end{tikzpicture}
\end{center}
\caption{Relative error $e_{H^1_{\text{out}}}$ ($\delta = 0.75$, $\eps=1/256$ and $H=1/32$).}
\label{fig:petit_eps}
\end{figure}
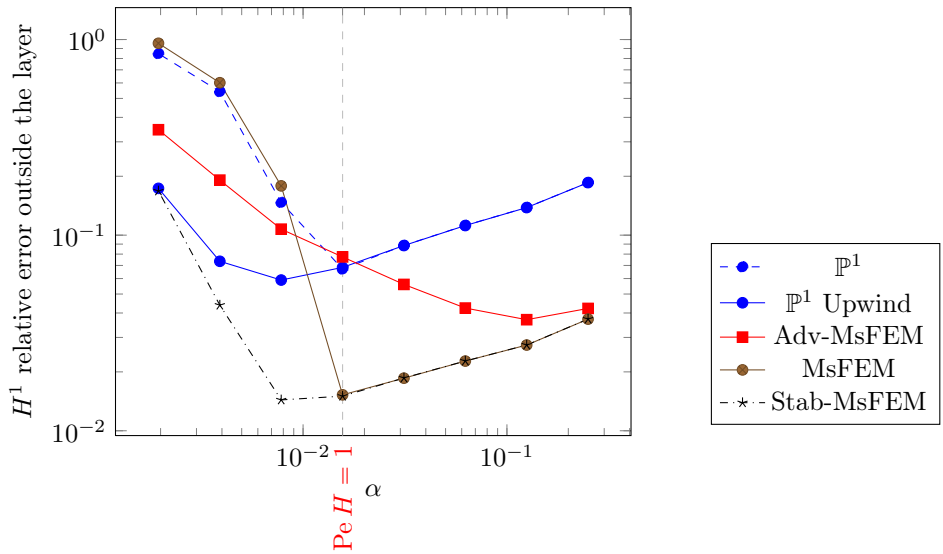

\subsubsection{Sensitivity with respect to the oscillation scale}
\label{sec:sen_eps}

In this section, the sensitivity of the different numerical methods to the oscillation scale $\eps$ is assessed. We work with the parameters $\delta = 0.75$, $H=1/32$, $\alpha=1/128$ and $\eps=2^{-k}$, $k=3,\cdots,8$, so that~Pe$\, H=2>1$. Table~\ref{tabl_eps_2} displays the relative errors of the $\mathbb{P}^1$ method and the $\mathbb{P}^1$ Upwind method for $\eps=1$. Outside the layer, the relative $H^1$ error of the $\mathbb{P}^1$ method is about 30 times as large as the error of the $\mathbb{P}^1$ Upwind method. The problem is convection-dominated.

\begin{table}[htbp]
\begin{center}
\begin{tabular}{l | l l l l l}
	&	$e_{L^2}$	&	\textbf{$e_{H^1}$}	&	\textbf{$e_{L^\infty}$}	&	\textbf{$e_{H^1_{\text{in}}}$}	&	\textbf{$e_{H^1_{\text{out}}}$}	\\ \noalign{\vskip 3pt}		
\hline					
$\mathbb{P}^1$	&	0.11 &	0.93 	&	0.48	&	0.86	&	0.33	\\
$\mathbb{P}^1$ Upwind		&	0.11 	&	0.75	&	0.46	&	0.75	&	0.01	\\
\end{tabular}
\caption{Relative errors in the single-scale case ($\alpha=1/128$, $\delta = 0.75$, $\eps=1$ and $H=1/32$).
\label{tabl_eps_2}}
\end{center}
\end{table}

Figures~\ref{curve2_H1_nos} and~\ref{curve2_H1out_nos} respectively show the relative global $H^1$ error and the relative $H^1$ error outside the boundary layer. The relative global $H^1$ error does not seem to be sensitive to the oscillation scale, as we can see on Figure~\ref{curve2_H1_nos}. This error is dominated by the error located in the thin boundary layer due to the convection-dominated regime. 

On Figure~\ref{curve2_H1out_nos}, two regions can be distinguished. In the region $\eps<H$, the \MsFEMBSUPG{} method performs better than the $\mathbb{P}^1$ Upwind method. The error of the \MsFEMBSUPG{} method decreases as $\eps$ decreases (but its cost increases correspondingly, as the mesh to compute the highly oscillatory basis functions has to be finer), whereas the error of the $\mathbb{P}^1$ Upwind method remains constant at a large value as $\eps$ decreases. The \MsFEMA{} method yields a large error (due to the mismatch between the shape of the solution outside the boundary layer and the shape of the basis functions). The \MsFEMB{} method is also inaccurate, given the absence of any stabilization. 

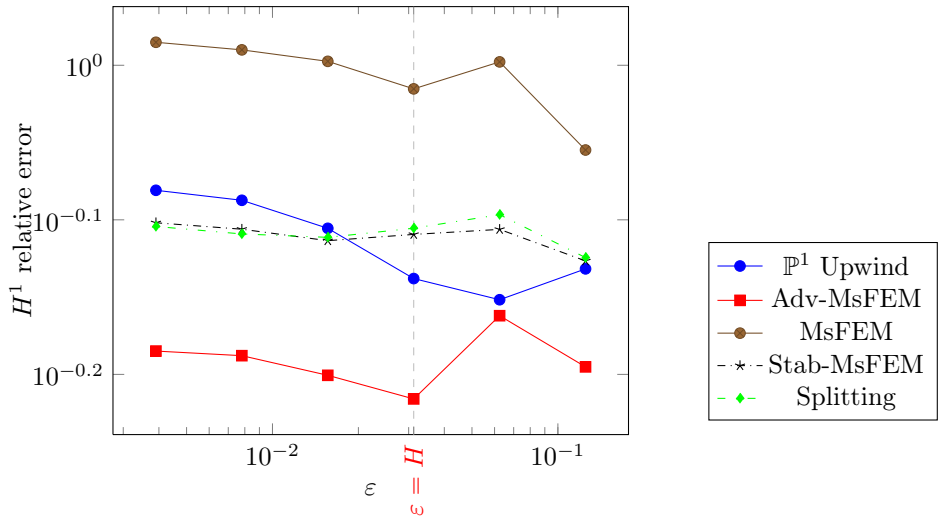
\begin{figure}[htbp]
\begin{center}
\begin{tikzpicture}
\begin{axis}[extra x ticks={0.03125} , extra x tick labels={\color{red}$\eps=H$},
extra x tick style={grid=major, dashed, tick label style={rotate=90,anchor=east}}, xlabel={$\eps$},ylabel={$H^1$ relative error},legend entries={
{$\mathbb{P}^1$ Upwind},{\MsFEMA},{\MsFEMB},{\MsFEMBSUPG},{Splitting}},cycle list name=mylistmethods,legend style={at={(1.6,0.45)}},xmode=log,ymode=log]
\addplot table[x index=0 ,y index=1]{data/curve2H1};
\addplot table[x index=0,y index=2]{data/curve2H1};
\addplot table[x index=0,y index=3]{data/curve2H1};
\addplot table[x index=0,y index=4]{data/curve2H1};
\addplot table[x index=0,y index=5]{data/curve2H1};
\end{axis}
\end{tikzpicture}
\end{center}
\caption{Relative error $e_{H^1}$ ($\alpha=1/128$, $\delta = 0.75$ and $H=1/32$).}
\label{curve2_H1_nos}
\end{figure}

\begin{figure}[htbp]
\begin{center}
\begin{tikzpicture}
\begin{axis}[extra x ticks={0.03125} , extra x tick labels={\color{red}$\eps=H$},
extra x tick style={grid=major, dashed, tick label style={rotate=90,anchor=east}}, xlabel={$\eps$},ylabel={$H^1$ relative error outside the layer},legend entries={
{$\mathbb{P}^1$ Upwind},{\MsFEMA},{\MsFEMB},{\MsFEMBSUPG},{Splitting}},cycle list name=mylistmethods,legend style={at={(1.6,0.45)}},xmode=log,ymode=log]
\addplot table[x index=0 ,y index=1]{data/curve2H1out};
\addplot table[x index=0,y index=2]{data/curve2H1out};
\addplot table[x index=0,y index=3]{data/curve2H1out};
\addplot table[x index=0,y index=4]{data/curve2H1out};
\addplot table[x index=0,y index=5]{data/curve2H1out};
\end{axis}
\end{tikzpicture}
\end{center}
\caption{Relative error $e_{H^1_{\text{out}}}$ ($\alpha=1/128$, $\delta = 0.75$ and $H=1/32$).}
\label{curve2_H1out_nos}
\end{figure}
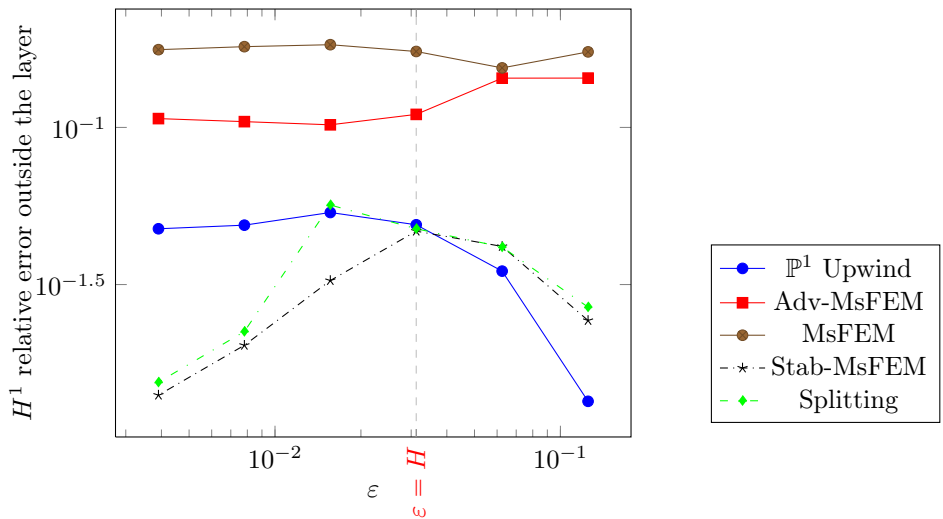

\subsubsection{Influence of the boundary conditions imposed on the local problems}
\label{change_bc}

In all the above experiments, the boundary conditions we have supplied the local problems~\eqref{local_pb_msfem_general} and~\eqref{local_problem_nos_msfemA} with are {\em linear} boundary conditions. For other choices of boundary conditions, our results remain qualitatively unchanged. We however wish to now investigate how the choice of boundary conditions affects the accuracy of the approaches \emph{within} the boundary layer, since this is there that the approaches equally poorly perform. It is known that, in general, the oversampling method is one of the best multiscale approach available for the multiscale diffusion problem~\eqref{pb_diff_multiscale}. Whether this superiority also survives in the presence of a strong advection is an interesting issue.

For clarity, the \MsFEMA{} method as presented above (i.e. based on the local problem~\eqref{local_problem_nos_msfemA}) is denoted here the \MsFEMA{} lin method. The other boundary conditions that we consider are 
\begin{itemize}
\item Oversampling boundary condition, with an oversampling ratio equal to 3. This method is denoted the \MsFEMA{} OS method;
\item Crouzeix-Raviart type boundary condition. This method is denoted the \MsFEMA{} CR method.
\end{itemize}
The oversampling method is described in~\cite{hou1997multiscale}. The MsFEM \`a la Crouzeix-Raviart has been introduced in~\cite{lebris2013msfem,lebris2014msfem}. Both the \MsFEMA{} OS and the \MsFEMA{} CR methods are non-conforming approaches. The relative $H^1$ error of those methods is therefore computed with the broken $H^1$ norm
$$
e_{H^1_{\text{in}}}(u_1)=\frac{\|u_1-u_{\text{ref}}\|_{H^1(\mathcal{T}_H\medcap\Omega_{\text{layer}})}}{\|u_{\text{ref}}\|_{H^1(\Omega)}},
$$
with $\dis \|u\|_{H^1(\mathcal{T}_H\medcap\Omega_{\text{layer}})}^2 =\sum_{\textbf{K}\in\mathcal{T}_H} \|u\|_{H^1(\textbf{K}\medcap\Omega_{\text{layer}})}^2$.
 
We first study the example presented in Section~\ref{an_example}. Table~\ref{tabl3} show the relative errors. We observe that there is at least a factor 2 between the relative $H^1$ error inside the layer of the \MsFEMA{} lin and the other \MsFEMA{} methods. The improvement in the accuracy outside the boundary layer is less important, although significant for the \MsFEMA{} CR method.

\begin{table}[htbp]
\begin{center}
\begin{tabular}{l | l l l l l}
	&	$e_{L^2}$	&	\textbf{$e_{H^1}$}	&	\textbf{$e_{L^\infty}$}	&	\textbf{$e_{H^1_{\text{in}}}$}	&	\textbf{$e_{H^1_{\text{out}}}$}	\\ \noalign{\vskip 3pt}						
\hline
\MsFEMA{} lin&	0.11	&	0.74	&	0.62	&	0.68	&	0.29\\
\MsFEMA{} OS	&	0.36	&	0.42	&	0.55	&	0.34&	0.24	\\
\MsFEMA{} CR		&	0.38	&	0.25	&	0.70	&	0.18	&	0.18
\end{tabular}
\caption{Relative errors for different boundary conditions in the local problems.
\label{tabl3}}
\end{center}
\end{table}

Second, we consider the setting presented in Section~\ref{section_sensitivity_peclet}. Figure~\ref{curve1_H1in_bcs} shows the relative $H^1$ error inside the layer for the different \MsFEMA{} methods. We observe that the boundary conditions imposed on the local problems affect the accuracy. The \MsFEMA{} lin method always has the largest error. In the convection-dominated regime, the \MsFEMA{} CR is the best method. At Pe$\, H=16$, there is a factor 2 between the relative $H^1$ error of the \MsFEMA{} lin method and the relative $H^1$ error of the \MsFEMA{} CR method (inside the layer). This shows that the convective profile should be encoded in some way in the boundary conditions imposed on the local problem in order for the solution to be accurate in the boundary layer region for the convection-dominated regime. For the MsFEM approaches other than the \MsFEMA{} approach, we have also performed similar experiments, which are not included here, and which do not seem to show any significant dependency of the accuracy inside the layer upon the boundary conditions of the local problems. 

Figure~\ref{curve1_H1out_bcs} shows the relative $H^1$ error outside the layer for the different \MsFEMA{} methods. It may be observed that the \MsFEMA{} lin method and the \MsFEMA{} OS method share the same error. The error of the \MsFEMA{} CR is the smallest in the convection-dominated regime. However, the errors outside the boundary layer of the various \MsFEMA{} methods are yet larger than the error outside the layer of the \MsFEMBSUPG{} method.

\begin{figure}[htbp]
\begin{center}
\begin{tikzpicture}
\begin{axis}[extra x ticks={0.03125} , extra x tick labels={\color{red}Pe$\, H=1$},
extra x tick style={grid=major, dashed, tick label style={rotate=90,anchor=east}}, xlabel={$\alpha$},ylabel={$H^1$ relative error inside the layer},cycle list name=mylist,legend entries={{\MsFEMA{} lin},{\MsFEMA{} OS},{\MsFEMA{} CR}},legend style={at={(1.6,0.45)}},xmode=log,ymode=log]
\addplot table[x index=0,y index=2]{data/curve1H1in};
\addplot table[x index=0,y index=2]{data/curve1H1inOS};
\addplot table[x index=0,y index=6]{data/curve1H1inosc};
\end{axis}
\end{tikzpicture}
\end{center}
\caption{Relative error $e_{H^1_{\text{in}}}$ for the \MsFEMA{} methods.}
\label{curve1_H1in_bcs}
\end{figure}

\begin{figure}[htbp]
\begin{center}
\begin{tikzpicture}
\begin{axis}[extra x ticks={0.03125} , extra x tick labels={\color{red}Pe$\, H=1$},
extra x tick style={grid=major, dashed, tick label style={rotate=90,anchor=east}}, xlabel={$\alpha$},ylabel={$H^1$ relative error outside the layer},cycle list name=mylist,legend entries={{\MsFEMA{} lin},{\MsFEMA{} OS},{\MsFEMA{} CR},{\MsFEMBSUPG},{\MsFEMBSUPG{} OS}},legend style={at={(1.6,0.45)}},xmode=log,ymode=log]
\addplot table[x index=0,y index=2]{data/curve1H1out};
\addplot table[x index=0,y index=2]{data/curve1H1outOS};
\addplot table[x index=0,y index=6]{data/curve1H1outosc};
\addplot[black,mark=asterisk] table[x index=0,y index=4]{data/curve1H1out};
\end{axis}
\end{tikzpicture}
\end{center}
\caption{Relative error $e_{H^1_{\text{out}}}$ for the \MsFEMA{} methods.}
\label{curve1_H1out_bcs}
\end{figure}

\subsection{Computational costs}
\label{section_cost}

We now turn to the computational costs of the different numerical methods. We recall that the splitting method we consider below is~\eqref{pb_spl_1_discrete_varf}--\eqref{pb_spl_2_discrete_varf}.

\subsubsection{Reference test}

We consider the reference test presented in Section~\ref{an_example}. Table~\ref{tabl2} shows the offline cost and the online cost (in seconds) of the different numerical methods.

\begin{table}[htbp]
\begin{center}
\begin{tabular}{l l l| l l l }
\textbf{Direct solvers}	&	Offline (s)	&	Online (s)	& \textbf{Iterative solvers}		&	Offline (s)	&	Online (s)	\\
\hline \noalign{\vskip 2pt}
\MsFEMBSUPG	&	$1.98\cdot 10^{2}$	&	$2.24\cdot 10^{-4}$	&	\MsFEMBSUPG	&	$2.63\cdot 10^{2}$	&	$5.78\cdot 10^{-4}$	\\
Splitting	&	$2.29\cdot 10^{2}$	&	$3.81\cdot 10^{-3}$	&	Splitting	&	$2.65\cdot 10^{2}$	&	$9.03\cdot 10^{-3}$	\\
\MsFEMB	&	$1.80\cdot 10^{2}$	&	$2.41\cdot 10^{-4}$	&	\MsFEMB	&	$2.33\cdot 10^{2}$	&	$1.63\cdot 10^{-3}$	\\
\MsFEMA	&	$1.84\cdot 10^{2}$	&	$2.20\cdot 10^{-4}$	&	\MsFEMA{} 	&	$5.89\cdot 10^{2}$	&	$6.99\cdot 10^{-4}$	\\
\end{tabular}
\caption{Computational costs. \label{tabl2}}
\end{center}
\end{table}

\paragraph*{Direct solvers} 

All the methods (but the splitting method) essentially share the same offline cost. The \MsFEMBSUPG{} method is slightly more expensive than the \MsFEMB{} variant because of the assembling of the stabilization term. The splitting method has the largest offline cost because there are more computations (two assemblings) than in the other methods.

The online cost of the splitting method is about 15 times as large as the online cost of the other methods. This corresponds to the number of iterations of the splitting method. Note that the online cost corresponds to solving the linear system from an already factorized matrix, which is negligible. 

\paragraph*{Iterative solvers}

The online cost of the intrusive methods (\MsFEMA, \MsFEMB, \MsFEMBSUPG) corresponds to calling the GMRES solver. There are some differences in these costs because the number of iterations of the GMRES solver is sensitive to the condition number of the matrix that depends on the method. The online cost of the splitting method is still the largest because of the iteration loop of the splitting method. It is again about 15 times larger than the online cost of the \MsFEMBSUPG{} method. In this particular case, the splitting method needs 12 iterations to converge. The online costs are larger now than for direct solvers, of course. 

The main part of the offline cost comes from solving the local problems. The \MsFEMB, \MsFEMBSUPG, and the splitting method share the same local problems, namely~\eqref{local_pb_msfem_general}. This is why they essentially share the same offline cost. In the \MsFEMA{} method, the local problem to solve is~\eqref{local_problem_nos_msfemA}. We observe that its offline cost is about 2 times larger than for the other methods. We thus see that the computational cost of solving with the GMRES solver the non-symmetric linear system corresponding to the local problem~\eqref{local_problem_nos_msfemA} is higher that the cost of solving with the conjugate gradient method the symmetric linear system stemming from the local problem~\eqref{local_pb_msfem_general}. 

\subsubsection{Dependency with respect to the P\'eclet number} 

We again consider the setting of Section~\ref{section_sensitivity_peclet} where we now vary the coefficient $\alpha$ and thus the P\'eclet number. Figures~\ref{curve1_online_nos} and~\ref{curve1_splitting_iteration_nos} respectively show the online cost (in seconds) of the different numerical methods and the number of iterations of the splitting method as a function of $\alpha$. 

In Figure~\ref{curve1_online_nos}, we observe that the \MsFEMA{} method and the \MsFEMBSUPG{} method share the same online cost. The online costs of the two methods and the online cost of the \MsFEMB{} method (with direct solvers) do not seem to strongly depend on the P\'eclet number. The online cost of the \MsFEMB{} method with iterative solvers increases as $\alpha$ decreases, since the condition number of the stiffness matrix then increases. The splitting method is, overall, significantly more expensive than the other approaches. 

Figure~\ref{curve1_splitting_iteration_nos} shows that the number of iterations in the splitting method grows as $\alpha$ decreases. The number of iterations is larger when using iterative solvers than when using direct solvers, although the difference fades as the convection becomes dominant.

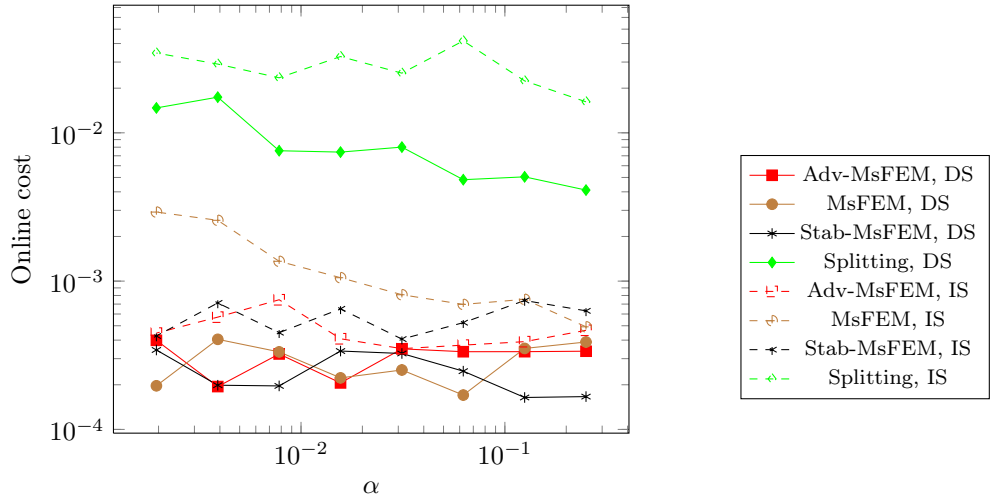
\begin{figure}[htbp]
\begin{center}
\begin{tikzpicture}
\begin{axis}[xlabel={$\alpha$},ylabel={Online cost},legend entries={{\footnotesize \MsFEMA, DS},{\footnotesize \MsFEMB, DS},{\footnotesize \MsFEMBSUPG, DS},{\footnotesize Splitting, DS},{\footnotesize \MsFEMA, IS},{\footnotesize \MsFEMB, IS},{\footnotesize \MsFEMBSUPG, IS},{\footnotesize Splitting, IS}},legend style={at={(1.7,0.65)}},xmode=log,ymode=log]
\addplot[mark=square*,color=red] table[x index=0 ,y index=1]{data/curve1online};
\addplot[mark=oplus*,color=brown] table[x index=0,y index=2]{data/curve1online};
\addplot[mark=asterisk,color=black] table[x index=0,y index=3]{data/curve1online};
\addplot[mark=diamond*,color=green] table[x index=0,y index=4]{data/curve1online};
\addplot[dashed,mark=square,color=red] table[x index=0 ,y index=1]{data/curve1online_iterative};
\addplot[dashed,mark=oplus,color=brown] table[x index=0,y index=2]{data/curve1online_iterative};
\addplot[dashed,mark=asterisk,color=black] table[x index=0,y index=3]{data/curve1online_iterative};
\addplot[dashed,mark=diamond,color=green] table[x index=0,y index=4]{data/curve1online_iterative};
\end{axis}
\end{tikzpicture}
\end{center}
\caption{Online costs (s) for the different numerical methods, using direct (DS) or iterative (IS) solvers.}
\label{curve1_online_nos}
\end{figure}

\begin{figure}[htbp]
\begin{center}
\begin{tikzpicture}
\begin{axis}[xlabel={$\alpha$},ylabel={Number of iterations},legend entries={{DS},{IS}},legend style={at={(0.65,0.25)}},xmode=log]
\addplot[mark=diamond*,color=green] table[x index=0,y index=5]{data/curve1online};
\addplot[dashed,mark=diamond,color=green] table[x index=0,y index=5]{data/curve1online_iterative};
\end{axis}
\end{tikzpicture}
\end{center}
\caption{Number of iterations of the splitting method for direct (DS) and iterative (IS) solvers.}
\label{curve1_splitting_iteration_nos}
\end{figure}
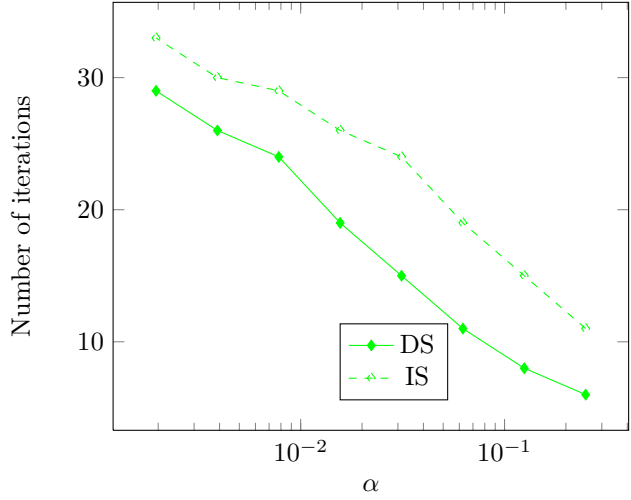

\section*{Acknowledgements}

The work presented in this article elaborates on a preliminary work that explored some of the issues on a prototypical one-dimensional setting, and which was performed in the context of the internship of H.~Ruffieux~\cite{HR2013} at CERMICS, \'Ecole des Ponts ParisTech. The present work benefits from this previous work. The authors wish to thank A.~Quarteroni for stimulating and enlightning discussions. CLB and FL also gratefully acknowledge the long term interaction with U.~Hetmaniuk (University of Washington in Seattle) and A.~Lozinski (Universit\'e de Besan\c con) on numerical methods for multiscale problems. The work of the authors is partially supported by ONR under Grant N00014-12-1-0383 and EOARD under Grant FA8655-13-1-3061.

\appendix

\section{Proof of~\eqref{apriori_supg_singlescale}}
\label{sec:appA}

By standard finite element results (see e.g.~\cite[end of Section IX.3]{bernardi-maday-rapetti} or~\cite[Remark 1.129 and Lemma 1.127]{ern2004theory}), we have the following best approximation property: for any $H$ and any $v \in H^1_0(\Omega) \cap H^2(\Omega)$, there exists $I_H v \in V_H$ such that
\begin{equation}
\label{app_prop}
\|v-I_Hv\|_{L^2(\Omega)} + H \, | v - I_H v |_{H^1(\Omega)} \leq C_{\rm FE} H^2 | v |_{H^2(\Omega)}
\end{equation}
where $C_{\rm FE}$ is independent of $H$ and $v$.

Our proof of~\eqref{apriori_supg_singlescale} is inspired by~\cite{HR2013} and~\cite[Theorem 11.2]{quarteroni2010numerical}. We split the error $u-u^s_H$ in two parts, $e^I=u-I_H u$ and~$
e^I_H=u^s_H -I_H u$. Given~\eqref{app_prop}, we have
\begin{align}
\label{eq:bout1}
|e^I|_{H^1(\Omega)}\leq CH|u|_{H^2(\Omega)}.
\end{align}
We now estimate the term $|e^I_H|_{H^1(\Omega)}$. The Galerkin orthogonality and~\eqref{div_b_zero} give
\begin{align}
&\alpha|e^I_H|_{H^1(\Omega)}^2 +a_{\text{stab}}(e^I_H,e^I_H)
\nonumber\\
&= a(e^I_H,e^I_H)+a_{\text{stab}}(e^I_H,e^I_H)
\nonumber\\
&=a(e^I,e^I_H)+a_{\text{stab}}(e^I,e^I_H)
\nonumber\\
&=\int_\Omega\left(\alpha\nabla e^I\cdot\nabla e^I_H + (b\cdot\nabla e^I) e^I_H\right) + \sum_{\mathbf{K}\in\mathcal{T}_H} \bigg(\tau_{\mathbf{K}}\mathcal{L}e^I,b\cdot \nabla e^I_H\bigg)_\mathbf{K}
\nonumber\\
&=\int_\Omega\left(\alpha\nabla e^I \cdot \nabla e^I_H - (b \cdot \nabla e^I_H) e^I\right) + \sum_{\mathbf{K} \in \mathcal{T}_H} \bigg( \tau_{\mathbf{K}} \mathcal{L} e^I,b\cdot \nabla e^I_H\bigg)_\mathbf{K}.
\label{err_detail}
\end{align}
Estimating each term of the right-hand side of~\eqref{err_detail}, we successively obtain:
\begin{itemize}
\item for the first term, using~\eqref{eq:bout1},
$$
\int_\Omega\alpha\nabla e^I\cdot\nabla e^I_H\leq \frac{\alpha}{4}|e^I_H|_{H^1(\Omega)}^2 + \alpha |e^I|_{H^1(\Omega)}^2 \leq \frac{\alpha}{4}|e^I_H|_{H^1(\Omega)}^2 + C \alpha H^2 |u|_{H^2(\Omega)}^2.
$$
\item for the second term,
\begin{eqnarray*} 
- \int_\Omega (b\cdot\nabla e^I_H) e^I 
&\leq&
\frac{1}{4}\sum_{\mathbf{K}\in\mathcal{T}_H}\|\tau_\mathbf{K}^{1/2} (b\cdot\nabla e^I_H) \|_{L^2(\mathbf{K})}^2 + \sum_{\mathbf{K}\in\mathcal{T}_H}\|\tau_\mathbf{K}^{-1/2} e^I\|_{L^2(\mathbf{K})}^2
\\
&\leq& 
\frac{1}{4} \sum_{\mathbf{K}\in\mathcal{T}_H}\|\tau_\mathbf{K}^{1/2} (b\cdot\nabla e^I_H) \|_{L^2(\mathbf{K})}^2 + \frac{2\|b\|_{L^\infty}}{H}\|e^I\|_{L^2(\Omega)}^2 
\\
&\leq& 
\frac{1}{4} a_{\text{stab}}(e^I_H,e^I_H) + C \|b\|_{L^\infty} H^3 |u|_{H^2(\Omega)}^2,
\end{eqnarray*}
where we have used that $\Delta e^I_H = 0$ in the first term and~\eqref{eq:def_tau} and~\eqref{app_prop} in the second term;
\item for the first part of the third term, using that $\Delta e^I = \Delta u$ (because $V_H$ is the $\mathbb{P}^1$ finite element space),
\begin{eqnarray*}
&& \sum_{\mathbf{K}\in\mathcal{T}_H} \bigg(\tau_{\mathbf{\mathbf{K}}}(-\alpha \Delta e^I),b\cdot \nabla e^I_H\bigg)_\mathbf{K}
\\
& \leq &
\frac{1}{2}\sum_{\mathbf{K}\in\mathcal{T}_H}\|\tau_\mathbf{K}^{1/2} \alpha \Delta u\|_{L^2(\mathbf{K})}^2 +\frac{1}{2}\sum_{\mathbf{K}\in\mathcal{T}_H}\|\tau_\mathbf{K}^{1/2} (b\cdot\nabla e^I_H) \|_{L^2(\mathbf{K})}^2
\\ 
&\leq &
\frac{\|b\|_{L^\infty} H^3}{16}|u|_{H^2(\Omega)}^2 +\frac{1}{2}\sum_{\mathbf{K}\in\mathcal{T}_H}\|\tau_\mathbf{K}^{1/2} (b\cdot\nabla e^I_H) \|_{L^2(\mathbf{K})}^2 
\\
&\leq &
C \|b\|_{L^\infty} H^3 |u|_{H^2(\Omega)}^2 +\frac{1}{2} a_{\text{stab}}(e^I_H,e^I_H),
\end{eqnarray*}
where we have used~\eqref{advect_x_regime} 
to obtain $\dis \alpha^2\tau_\mathbf{K}(x) \leq \left(\frac{|b(x)|H}{2}\right)^2 \frac{H}{2|b(x)|} \leq \frac{\|b\|_{L^\infty} H^3}{8}$;
\item for the second part of the third term, 
\begin{eqnarray*}
&& \sum_{\mathbf{K}\in\mathcal{T}_H} \bigg(\tau_{\mathbf{K}}b\cdot\nabla e^I,b\cdot \nabla e^I_H\bigg)_\mathbf{K}
\\
&\leq& 
\sum_{\mathbf{K}\in\mathcal{T}_H}\|\tau_\mathbf{K}^{1/2} (b\cdot\nabla e^I) \|_{L^2(\mathbf{K})}^2+\sum_{\mathbf{K}\in\mathcal{T}_H}\frac{1}{4}\|\tau_\mathbf{K}^{1/2} (b\cdot\nabla e^I_H) \|_{L^2(\mathbf{K})}^2
\\
&\leq& 
\frac{\|b\|_{L^\infty} H}{2}|e^I|_{H^1(\Omega)}^2 +\sum_{\mathbf{K}\in\mathcal{T}_H}\frac{1}{4}\|\tau_\mathbf{K}^{1/2} (b\cdot\nabla e^I_H)\|_{L^2(\mathbf{K})}^2
\\
&\leq& 
C \|b\|_{L^\infty} H^3 |u|_{H^2(\Omega)}^2 +\frac{1}{4} a_{\text{stab}}(e^I_H,e^I_H),
\end{eqnarray*}
where we used 
that $\dis\tau_\mathbf{K}(x)|b(x)\cdot\nabla e^I(x)|^2\leq \frac{H|b(x)|}{2}|\nabla e^I(x)|^2$.
\end{itemize}
Collecting the terms, we eventually deduce from~\eqref{err_detail} that
$$
\alpha|e^I_H|_{H^1(\Omega)}^2 +a_{\text{stab}}(e^I_H,e^I_H)\leq \frac{\alpha}{4}|e^I_H|_{H^1(\Omega)}^2+ a_{\text{stab}}(e^I_H,e^I_H) + CH^2\left(\alpha + \|b\|_{L^\infty} H\right)|u|^2_{H^2(\Omega)},
$$
hence $\dis \frac{3}{4} \alpha|e^I_H|_{H^1(\Omega)}^2 \leq CH^2\left(\alpha + \|b\|_{L^\infty} H\right)|u|^2_{H^2(\Omega)}$, thus
$$
|e^I_H|_{H^1(\Omega)} \leq CH\left(1 + \text{Pe} \, H\right)^{1/2} |u|_{H^2(\Omega)}.
$$
Along with~\eqref{eq:bout1}, this estimate shows~\eqref{apriori_supg_singlescale}.

\section{Density of the MsFEM spaces $\VB$ in $H^1_0(\Omega)$}
\label{sec:appB}

We prove here the following result:

\begin{lemma}
\label{lem:densite_msfem}
Let $\eps$ be fixed and assume that $A^\eps \in W^{1,\infty}(\Omega)$. Consider, for any $H$, the spaces $\VB$ defined by~\eqref{def_VB}. Then, for any $g \in H^1_0(\Omega)$ and any $\eta > 0$, there exists $H_0>0$ such that, for any $H<H_0$, there exists some $w_H \in \VB$ such that $\| g - w_H \|_{H^1(\Omega)} \leq \eta$. 
\end{lemma}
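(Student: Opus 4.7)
The plan is to approximate $g$ by a smooth function, then by its $\mathbb{P}^1$ interpolant, and finally to compare this $\mathbb{P}^1$ interpolant to the MsFEM function sharing the same nodal values. Fix $\eta>0$ and $g\in H^1_0(\Omega)$. By density of $C^\infty_c(\Omega)$ in $H^1_0(\Omega)$, first pick $\tilde g \in C^\infty_c(\Omega)$ with $\|g-\tilde g\|_{H^1(\Omega)}\leq \eta/3$. Classical finite element interpolation gives $\|\tilde g - I_H\tilde g\|_{H^1(\Omega)}\leq CH|\tilde g|_{H^2(\Omega)}$, which is $\leq \eta/3$ once $H$ is small enough. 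Next define $w_H\in\VB$ as the MsFEM function with the same nodal values as $\tilde g$, namely $\dis w_H=\sum_i \tilde g(x_i)\,\psi^\eps_i$. Since $\psi^\eps_i(x_j)=\delta_{ij}$ and $\psi^\eps_i=\phi^0_i$ on every element boundary, $w_H$ coincides with $I_H\tilde g$ on the skeleton of $\Th$. The entire proof reduces to showing that $\|w_H-I_H\tilde g\|_{H^1(\Omega)}\to 0$ as $H\to 0$.

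The key and only non-routine step is the following element-wise comparison. On each $\mathbf{K}\in\Th$, set $v=w_H-I_H\tilde g\in H^1_0(\mathbf{K})$. Because $w_H$ satisfies $-\mathrm{div}(A^\eps\nabla w_H)=0$ in $\mathbf{K}$ and $I_H\tilde g$ is affine on $\mathbf{K}$, integration by parts and the hypothesis $A^\eps\in W^{1,\infty}(\Omega)$ give, for all $\varphi\in H^1_0(\mathbf{K})$,
$$
\int_{\mathbf{K}} A^\eps\nabla v\cdot\nabla\varphi = -\int_{\mathbf{K}} A^\eps\nabla (I_H\tilde g)\cdot\nabla\varphi = -\int_{\mathbf{K}} \bigl(\mathrm{div}\,A^\eps\cdot\nabla(I_H\tilde g)\bigr)\varphi,
$$
since $\nabla(I_H\tilde g)$ is constant on $\mathbf{K}$. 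Testing against $\varphi=v$, using coercivity~\eqref{condition_alpha}, the Cauchy–Schwarz inequality and a Poincaré inequality on $\mathbf{K}$ (with constant of order $H$), one obtains
$$
|v|_{H^1(\mathbf{K})} \leq \frac{CH}{\alpha_1}\,\|\mathrm{div}\,A^\eps\|_{L^\infty(\mathbf{K})}\,|I_H\tilde g|_{H^1(\mathbf{K})}.
$$
Squaring and summing over all $\mathbf{K}\in\Th$ yields
$$
|w_H-I_H\tilde g|_{H^1(\Omega)} \leq \frac{CH}{\alpha_1}\,\|\mathrm{div}\,A^\eps\|_{L^\infty(\Omega)}\,|I_H\tilde g|_{H^1(\Omega)}.
$$

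The third and last step is to assemble the three estimates. Since $|I_H\tilde g|_{H^1(\Omega)}\leq |\tilde g|_{H^1(\Omega)}+CH|\tilde g|_{H^2(\Omega)}$ is bounded uniformly in $H$, the right-hand side above tends to $0$ as $H\to 0$, so for $H$ small enough it is $\leq \eta/3$. A triangle inequality
$$
\|g-w_H\|_{H^1(\Omega)}\leq \|g-\tilde g\|_{H^1(\Omega)} + \|\tilde g - I_H\tilde g\|_{H^1(\Omega)} + \|I_H\tilde g - w_H\|_{H^1(\Omega)}
$$
combined with the Poincaré inequality on $\Omega$ (to pass from $H^1$-seminorm to full $H^1$-norm, using that all three terms vanish on $\partial\Omega$) yields the required bound by $\eta$. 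The main obstacle, and the only place where $A^\eps\in W^{1,\infty}(\Omega)$ is actually used, is the comparison estimate of the second paragraph: without this regularity assumption, $\mathrm{div}(A^\eps\nabla(I_H\tilde g))$ is not in general a function and the local $H^1$ control of $w_H-I_H\tilde g$ breaks down.
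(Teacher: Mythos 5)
Your proposal is correct and follows essentially the same route as the paper's proof in Appendix~\ref{sec:appB}: approximate $g$ by a smoother function, take a $\mathbb{P}^1$ approximant, and control the difference between that approximant and the MsFEM function with the same nodal coefficients via the local equation, the $W^{1,\infty}$ regularity of $A^\eps$, and the Poincar\'e inequality on each element with constant of order $H$. The only (immaterial) differences are the use of $C^\infty_c(\Omega)$ instead of $H^2(\Omega)\cap H^1_0(\Omega)$ and of the nodal interpolant instead of a best-approximation element, plus a harmless sign slip in the integration by parts.
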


Note that we make no structural assumption on how $A^\eps$ depends on $\eps$, and that we do not assume $\eps$ to be small.

\begin{proof}
We fix $\eta$ and $g \in H^1_0(\Omega)$. By density of $H^2(\Omega) \cap H^1_0(\Omega)$ in $H^1_0(\Omega)$, there exists $\widetilde{g} \in H^2(\Omega) \cap H^1_0(\Omega)$ such that
\begin{equation}
\label{eq:tata0}
\| g - \widetilde{g} \|_{H^1(\Omega)} \leq \eta.
\end{equation}
By standard finite element results (see e.g.~\cite[end of Section IX.3]{bernardi-maday-rapetti} or~\cite[Remark 1.129 and Lemma 1.127]{ern2004theory}), for any $H$, there exists $v_H \in V_H$ such that
\begin{equation}
\label{eq:tata0bis}
\| \widetilde{g} - v_H \|_{H^1(\Omega)} \leq C_{\rm FE} H \| \widetilde{g} \|_{H^2(\Omega)}
\end{equation}
where $C_{\rm FE}$ is independent of $H$ and $\widetilde{g}$.

Picking $H_{\rm TR1} = \eta / (C_{\rm FE} \| \widetilde{g} \|_{H^2(\Omega)})$, we therefore deduce from~\eqref{eq:tata0} and~\eqref{eq:tata0bis} that, for any $H<H_{\rm TR1}$, we have $v_H \in V_H$ such that 
\begin{equation}
\label{eq:tata1}
\| g - v_H \|_{H^1(\Omega)} \leq 2 \eta.
\end{equation}

The function $v_H$ belongs to $V_H$, and hence reads $v_H = \sum_i v_i \, \phi_i^0$. We now consider $w_H = \sum_i v_i \, \psi_i^\eps$, which belongs to $\VB$. On each element $\mathbf{K}$, we have
$$
\left\{
\begin{array}{l}
\dis -\text{div } \big( A^\eps \nabla (w_H - v_H) \big) = \text{div } \left( A^\eps \nabla v_H \right) = \sum_{k,\ell=1}^d \partial_k (A^\eps)_{k \ell} \, \partial_\ell v_H \quad \text{in $\mathbf{K}$},
\\
w_H - v_H=0 \quad \text{on $\partial\mathbf{K}$}.
\end{array}
\right.
$$
We hence have that
$$
\int_{\mathbf{K}} \big( \nabla (w_H - v_H) \big)^T A^\eps \nabla (w_H - v_H) 
= 
\int_{\mathbf{K}} \sum_{k,\ell=1}^d (w_H - v_H) \partial_k (A^\eps)_{k \ell} \, \partial_\ell v_H
$$
and therefore, using~\eqref{condition_alpha},
$$
\alpha_1 \| \nabla (w_H - v_H) \|^2_{L^2(\mathbf{K})}
\leq
\| A^\eps \|_{W^{1,\infty}(\Omega)} \ \| w_H - v_H \|_{L^2(\mathbf{K})} \ \| \nabla v_H \|_{L^2(\mathbf{K})}.
$$
Using the Poincaré inequality on $\mathbf{K}$, we obtain that there exists a constant $C$ (which only depends on the shape of the elements, but not on their size) such that
$$
\alpha_1 \| \nabla (w_H - v_H) \|_{L^2(\mathbf{K})}
\leq
C H \| A^\eps \|_{W^{1,\infty}(\Omega)} \ \| \nabla v_H \|_{L^2(\mathbf{K})}.
$$
Summing over the elements, we obtain
$$
\alpha_1 \| \nabla (w_H - v_H) \|_{L^2(\Omega)}
\leq
C H \| A^\eps \|_{W^{1,\infty}(\Omega)} \ \| \nabla v_H \|_{L^2(\Omega)}
$$
which implies, using the Poincaré inequality on $\Omega$, that
$$
\alpha_1 \| w_H - v_H \|_{H^1(\Omega)}
\leq
C H \| A^\eps \|_{W^{1,\infty}(\Omega)} \ \| v_H \|_{H^1(\Omega)}.
$$
Using~\eqref{eq:tata1} in the above bound, we get that, for any $H < H_{\rm TR1}$,
\begin{equation}
\label{eq:tata2}
\| w_H - v_H \|_{H^1(\Omega)} \leq C_{\rm P} H \left[ 2 \eta + \| g \|_{H^1(\Omega)} \right].
\end{equation}
We set $H_{\rm TR2} = \min(1/C_{\rm P},\eta/(C_{\rm P} \| g \|_{H^1(\Omega)}))$ and $H_0 = \min(H_{\rm TR1},H_{\rm TR2})$. Collecting~\eqref{eq:tata1} and~\eqref{eq:tata2}, we deduce that, for any $H < H_0$, $w_H \in \VB$ satisfies
$$
\| g - w_H \|_{H^1(\Omega)} \leq 5\eta.
$$
This concludes the proof. 
\end{proof}

\section{Proof of Lemma~\ref{lem:iter_alternative}}
\label{sec:appC}

We first study the convergence when $n \to \infty$, and next when $H \to 0$.

\medskip

\noindent {\bf Step 1: Convergence when $n \to \infty$.} 
We directly infer from~\eqref{pb_spl_beta_discret_2_discrete_varf} that
\begin{align}
|\widetilde{u}_{2n+1}^{H}|_{H^1(\Omega)} \leq \frac{\beta + \alpha_{\rm spl}}{\beta + \alpha_1}|\widetilde{u}_{2n}^{H}|_{H^1(\Omega)}.
\label{ineq_22}
\end{align}
We now estimate $|\widetilde{u}_{2n+2}^{H}|_{H^1(\Omega)}$ and $|\widetilde{u}_{2n+1}^{H}-P_{\VB}(\widetilde{u}_{2n}^{H})|_{H^1(\Omega)}$. Using the variational formulations~\eqref{pb_spl_beta_discret_1_discrete_varf} for $u_{2n+2}^{H}$ and $u_{2n+4}^{H}$, we deduce a variational formulation for $\widetilde{u}_{2n+2}^{H} = u_{2n+4}^{H} - u_{2n+2}^{H}$. Taking $\widetilde{u}_{2n+2}^{H}$ as test function in that variational formulation, we get
\begin{align}
&(\alpha_{\rm spl}+\beta)|\widetilde{u}_{2n+2}^{H}|_{H^1(\Omega)}^2 
\nonumber 
\\
&\leq \int_\Omega (b\cdot\nabla w_n) \widetilde{u}_{2n+2}^{H} +\sum_{\mathbf{K}\in\mathcal{T}_H} (\tau_\mathbf{K}b\cdot\nabla w_n,b\cdot\nabla \widetilde{u}_{2n+2}^{H})_{L^2(\mathbf{K})} + \int_\Omega \beta \nabla \widetilde{u}_{2n+1}^{H} \cdot\nabla \widetilde{u}_{2n+2}^{H} 
\nonumber
\\
&\leq \| b \|_{L^\infty(\Omega)} |w_n|_{H^1(\Omega)} \| \widetilde{u}_{2n+2}^{H} \|_{L^2(\Omega)} 
\nonumber
\\
& \qquad + \sum_{\mathbf{K}\in\mathcal{T}_H} \frac{H\|b\|_{L^\infty(\Omega)}}{2}\|\nabla w_n\|_{L^2(\mathbf{K})}\|\nabla \widetilde{u}_{2n+2}^{H}\|_{L^2(\mathbf{K})} + \beta | \widetilde{u}_{2n+1}^{H} |_{H^1(\Omega)} | \widetilde{u}_{2n+2}^{H} |_{H^1(\Omega)}
\nonumber
\\
&\leq \| b \|_{L^\infty(\Omega)} |w_n|_{H^1(\Omega)} \| \widetilde{u}_{2n+2}^{H} \|_{L^2(\Omega)} + \frac{H\|b\|_{L^\infty(\Omega)}}{2} |w_n|_{H^1(\Omega)}|\widetilde{u}_{2n+2}^{H}|_{H^1(\Omega)} 
\nonumber
\\
& \qquad + \beta | \widetilde{u}_{2n+1}^{H} |_{H^1(\Omega)} | \widetilde{u}_{2n+2}^{H} |_{H^1(\Omega)}
\nonumber
\\
&\leq\left[\left(C_\Omega+\frac{H}{2}\right)\|b\|_{L^\infty(\Omega)}|w_n|_{H^1(\Omega)}+\beta|\widetilde{u}_{2n+1}^{H}|_{H^1(\Omega)}\right] |\widetilde{u}_{2n+2}^{H}|_{H^1(\Omega)},
\label{ineq_27}
\end{align}
where $w_n = P_{\VB}(\widetilde{u}_{2n}^{H})-\widetilde{u}_{2n+1}^{H}$. 

We now estimate $|w_n|_{H^1(\Omega)}$. We know that, for any $\psi\in \VB$,
\begin{align*}
a_1(w_n,\psi)&=a_1(\widetilde{u}_{2n}^{H}-\widetilde{u}_{2n+1}^{H},\psi)\\
&= a_2(\widetilde{u}_{2n+1}^{H},\psi)- a_1(\widetilde{u}_{2n+1}^{H},\psi)\\
&= \int_\Omega (\nabla \psi)^T (A^\eps-\alpha_{\rm spl}\text{Id})\nabla\widetilde{u}_{2n+1}^{H},
\end{align*}
where we used~\eqref{projection_VB_varf} in the first line and~\eqref{pb_spl_beta_discret_2_discrete_varf} in the second line. Using that $w_n \in \VB$ (this is where using the projection $P_{\VB}$ is needed), we deduce that 
\begin{align}
|w_n|_{H^1(\Omega)}\leq \frac{\|A^\eps - \alpha_{\rm spl}\text{Id}\|_{L^\infty(\Omega)}}{\beta + \alpha_{\rm spl}} |\widetilde{u}_{2n+1}^{H}|_{H^1(\Omega)}.
\label{ineq_28}
\end{align}
Collecting~\eqref{ineq_27}, \eqref{ineq_28} and~\eqref{ineq_22}, we obtain
\begin{align}
|\widetilde{u}_{2n+2}^{H}|_{H^1(\Omega)} \leq \rho|\widetilde{u}_{2n}^{H}|_{H^1(\Omega)},
\label{ineq_29}
\end{align}
where $\dis \rho = \left(C_\Omega+\frac{H}{2}\right)\frac{\|b\|_{L^\infty(\Omega)}}{\beta+\alpha_{\rm spl}}\frac{\| A^\eps-\alpha_{\rm spl}\text{Id}\|_{L^\infty(\Omega)}}{\beta+\alpha_1}+\frac{\beta}{\beta+\alpha_{1}}$. 

As in the proof of Lemma~\ref{lemma_conv_beta}, we introduce the function
$$
g(x) = \left( C_\Omega + \frac{H}{2} \right)\frac{\|b\|_{L^\infty(\Omega)}}{x+\alpha_{\rm spl}}\frac{\| A^\eps-\alpha_{\rm spl}\text{Id}\|_{L^\infty(\Omega)}}{x+\alpha_1}+\frac{x}{x+\alpha_{1}},
$$
observe that $\dis g(x) = 1 - \frac{\alpha_1}{x} + O\left(\frac{1}{x^2}\right)$, which implies, since $\alpha_1>0$, that $\dis \min_{x \geq 0} g(x) < 1$. In view of~\eqref{beta_choice_discrete}, we have $\dis \rho = g(\beta) = \min_{x \geq 0} g(x) < 1$. 

Arguing as in the proof of Lemma~\ref{lemma_conv}, we obtain that $(u_{2n}^{H},u_{2n+1}^{H})$ converges in $H^1_0(\Omega) \times H^1_0(\Omega)$ to some $(u^{H}_{\text{even}},u^{H}_{\text{odd}})\in V_H\times \VB$. Letting $n$ go to $+\infty$ in~\eqref{pb_spl_beta_discret_1_discrete_varf} and~\eqref{pb_spl_beta_discret_2_discrete_varf}, we obtain that $u^{H}_{\text{even}}$ and $u^{H}_{\text{odd}}$ satisfy the variational formulations~\eqref{pb_spl_beta_discret_lim_1_discrete_varf} and~\eqref{pb_spl_beta_discret_lim_2_discrete_varf}.

\medskip

\noindent {\bf Step 2: Convergence when $H \to 0$.} 
We recast~\eqref{pb_spl_beta_discret_lim_1_discrete_varf}--\eqref{pb_spl_beta_discret_lim_2_discrete_varf} as the following variational formulation:

\begin{equation}
\label{eq:coer9}
\begin{array}{c} 
\text{Find $(u_{\text{even}}^H,u_{\text{odd}}^H) \in V_H \times \VB$ such that, for any $(v,w) \in V_H\times\VB$},
\\
c_H \Big( (u_{\text{even}}^H,u_{\text{odd}}^H),(v,w) \Big)=B_H(v,w),
\end{array}
\end{equation}
where the bilinear form
\begin{eqnarray*}
c_H \Big( (u_{\text{even}}^H,u_{\text{odd}}^H),(v,w) \Big) 
&=& 
a_1(u^H_{\text{even}},v) + a_{\text{conv}}(u^H_{\text{even}},v)
\\
&& - \int_\Omega \beta \nabla u^H_{\text{odd}} \cdot \nabla v + a_{\text{conv}}(u^H_{\text{odd}},v)
\\
&& - a_{\text{conv}}(P_{\VB}(u^H_{\text{even}}),v) + a_2(u^H_{\text{odd}},w) - a_1(u^H_{\text{even}},w)
\end{eqnarray*}
is defined on $\Big( H^1_0(\Omega) \times H^1_0(\Omega) \Big)^2$. Recall that $a_1$, $a_{\text{conv}}$ and $a_2$ are defined by~\eqref{eq:def_a1}, \eqref{eq:def_aconv} and~\eqref{eq:def_a2}, respectively, while the operator $P_{\VB}$ is defined by~\eqref{projection_VB_varf}. The linear form
$$
B_H(v,w) = \int_\Omega fv + \sum_{\mathbf{K} \in \mathcal{T}_H}(\tau_{\mathbf{K}}f,b\cdot\nabla v)_{L^2(\mathbf{K})}
$$
is defined on $H^1_0(\Omega) \times H^1_0(\Omega)$. Note that $B_H(v,w)$ does not depend on $w$. 

\medskip

The convergence proof when $H \to 0$ is based on the following arguments. First, we are going to show that, if $H$ is sufficiently small, $c_H$ satisfies an inf-sup condition uniformly in the mesh size $H$. For that purpose, we adapt the arguments of~\cite[Theorem 4.2.9]{sauter} to our setting. We introduce the bilinear form
$$
\widetilde{c}_H \Big( (u,v),(\phi,\psi) \Big) = c_H \Big( (u,v),(\phi,\psi) \Big) + \lambda\int_\Omega u \, \phi,
$$
defined on $\Big( H^1_0(\Omega) \times H^1_0(\Omega) \Big)^2$, where $\lambda>0$ is a parameter, and show in Step~2a below that $\widetilde{c}_H$ is coercive in the $H^1(\Omega) \times H^1(\Omega)$ norm, provided $\lambda$ is large enough and $H$ is sufficiently small. This allows us to next show, as claimed above, that $c_H$ satisfies the inf-sup condition (see Step~2b), uniformly in $H$ (as soon as $H$ is sufficiently small). In contrast to the setting of~\cite[Theorem 4.2.9]{sauter}, the bilinear forms $c_H$ and $\widetilde{c}_H$ here depend on $H$.  

We are then in position to use classical numerical analysis arguments (see Step~2c) for estimating the discretization error (see~\eqref{eq:coer10} below). This error is bounded from above (up to some multiplicative constants) by the best approximation error and by the error introduced by the fact that $c_H$ and $B_H$ in~\eqref{eq:coer9} depend on $H$. The end of the proof amounts to showing that these two errors converge to 0 when $H \to 0$.

\medskip

\noindent {\bf Step 2a: Coercivity of $\widetilde{c}_H$.} 
We assume that
\begin{equation}
\label{eq:assum_sauter}
\lambda \geq \frac{4 \|b\|_{L^\infty(\Omega)}^2}{\alpha_{\rm spl}} + \frac{\|b\|^2_{L^\infty(\Omega)}}{2\left(\alpha_1 - \alpha_{\rm spl}/2 \right)} 
\quad \text{and} \quad 
H \| b\|_{L^\infty(\Omega)} < \min \left( 2 \alpha_1 - \alpha_{\rm spl}, \frac{\alpha_{\rm spl}}{5}\right)
\end{equation} 
where we recall that $\alpha_1$ is such that~\eqref{condition_alpha} holds and that $\alpha_{\rm spl}$ is such that $2 \alpha_1 > \alpha_{\rm spl}$ (see~\eqref{eq:hyp_spl}). We claim that 
\begin{equation}
\label{eq:claim_c_tilde}
\text{Under assumption~\eqref{eq:assum_sauter}, $\widetilde{c}_H$ is coercive}.
\end{equation} 
Note that~\eqref{eq:assum_sauter} does not impose any restriction on $\beta$. The assumption~\eqref{beta_choice_discrete} is only used in Step 1 above (to show the convergence when $n \to \infty$).

\medskip

For any $(u,v) \in H^1_0(\Omega) \times H^1_0(\Omega)$, we have
\begin{eqnarray}
\widetilde{c}_H \Big( (u,v),(u,v) \Big)
& = &
a_1(u,u) + a_2(v,v) + \lambda \| u \|_{L^2(\Omega)}^2
\nonumber\\
&&- \int_\Omega \beta \nabla v \cdot \nabla u - a_1(u,v) 
\nonumber\\
&& + a_{\text{conv}}(u-P_{\VB}(u),u) + a_{\text{conv}}(v,u). 
\label{term2}
\end{eqnarray}
Using the coercivity of the bilinear forms $a_1$ and $a_2$, we get
\begin{equation}
\label{eq:coer1}
a_1(u,u) + a_2(v,v) \geq (\beta + \alpha_{\rm spl}) |u|^2_{H^1(\Omega)} + (\beta + \alpha_1) |v|^2_{H^1(\Omega)}.
\end{equation}
We now bound the terms in the last line of~\eqref{term2}. Using the fact that $\dis \tau_{\mathbf{K}}(x)= \frac{H}{2|b(x)|}$ and $|P_{\VB}(u)|_{H^1(\Omega)} \leq |u|_{H^1(\Omega)}$, we have that
\begin{eqnarray}
& & |a_{\text{conv}}(u-P_{\VB}(u),u) + a_{\text{conv}}(v,u)|
\nonumber
\\
& \leq &
\| b \|_{L^\infty(\Omega)} \, |u-P_{\VB}(u)|_{H^1(\Omega)} \, \|u\|_{L^2(\Omega)} + \frac{H}{2} \, \| b\|_{L^\infty(\Omega)} \, |u-P_{\VB}(u)|_{H^1(\Omega)} \, |u|_{H^1(\Omega)} 
\nonumber
\\
& & 
+ \| b \|_{L^\infty(\Omega)} \, |v|_{H^1(\Omega)} \, \| u \|_{L^2(\Omega)} + \frac{H}{2} \, \| b\|_{L^\infty(\Omega)} \, |v|_{H^1(\Omega)} \, |u|_{H^1(\Omega)} 
\nonumber
\\
& \leq &
2\| b \|_{L^\infty(\Omega)} \, |u|_{H^1(\Omega)} \, \|u\|_{L^2(\Omega)} + \frac{5H}{4} \, \| b\|_{L^\infty(\Omega)} \, |u|^2_{H^1(\Omega)}
\nonumber
\\
&&
+ \| b \|_{L^\infty(\Omega)} \, |v|_{H^1(\Omega)} \, \| u \|_{L^2(\Omega)} + \frac{H}{4} \, \| b\|_{L^\infty(\Omega)} \, |v|^2_{H^1(\Omega)}
\nonumber
\\
& \leq &
\left( \frac{\alpha_{\rm spl}}{4} + \frac{5H}{4} \, \| b\|_{L^\infty(\Omega)} \right) |u|_{H^1(\Omega)}^2 + \left( \frac{4 \|b\|_{L^\infty(\Omega)}^2}{\alpha_{\rm spl}} + \frac{\|b\|^2_{L^\infty(\Omega)}}{2\left(\alpha_1- \alpha_{\rm spl}/2 \right)} \right) \|u\|_{L^2(\Omega)}^2
\nonumber
\\
&&
+ \left( \frac{1}{2} \left(\alpha_1 - \frac{\alpha_{\text{spl}}}{2} \right) + \frac{H}{4} \, \| b\|_{L^\infty(\Omega)} \right)|v|_{H^1(\Omega)}^2,
\label{eq:coer3}
\end{eqnarray}
where we have used a Young inequality in the last line. We bound the terms in the second line of~\eqref{term2} by
\begin{equation}
\label{eq:coer2}
\left| - \int_\Omega \beta \nabla v \cdot \nabla u - a_1(u,v) \right| 
\leq
\frac{\beta}{2} (|u|_{H^1(\Omega)}^2+|v|_{H^1(\Omega)}^2) + \frac{\beta+\alpha_{\rm spl}}{2} (|u|_{H^1(\Omega)}^2+|v|_{H^1(\Omega)}^2).
\end{equation} 
Collecting~\eqref{term2}, \eqref{eq:coer1}, \eqref{eq:coer3} and~\eqref{eq:coer2}, we get
\begin{eqnarray*}
\widetilde{c}_H \Big( (u,v),(u,v) \Big)
&\geq& 
\left( \frac{\alpha_{\rm spl}}{4} - \frac{5H}{4} \, \| b\|_{L^\infty(\Omega)} \right) |u|_{H^1(\Omega)}^2 
\\
&&
+ \left( \frac{1}{2} \left(\alpha_1 - \frac{\alpha_{\rm spl}}{2} \right) - \frac{H}{4} \, \| b\|_{L^\infty(\Omega)} \right) |v|_{H^1(\Omega)}^2 
\\
&&
+ \left( \lambda - \frac{4\|b\|_{L^\infty(\Omega)}^2}{\alpha_{\rm spl}} - \frac{\|b\|^2_{L^\infty(\Omega)}}{2\left(\alpha_1 - \alpha_{\rm spl}/2 \right)} \right) \|u\|^2_{L^2(\Omega)}.
\end{eqnarray*}
Under assumption~\eqref{eq:assum_sauter}, using a Poincaré inequality, we see that there exists $\eta>0$ such that 
\begin{equation}
\label{eq:coercivite}
\forall (u,v) \in H^1_0(\Omega) \times H^1_0(\Omega), \quad
\widetilde{c}_H \Big( (u,v),(u,v) \Big) \geq \eta \left( \|u\|_{H^1(\Omega)}^2 + \|v\|_{H^1(\Omega)}^2 \right).
\end{equation} 
This concludes the proof of the claim~\eqref{eq:claim_c_tilde}. 

\medskip

\noindent {\bf Step 2b: Inf-sup condition on $c_H$.} 
We want to show that there exists $H_0>0$ and $\alpha>0$ such that, for any $H\leq H_0$,
\begin{equation}
\label{eq:coer4}
\underset{U^H \in V_H \times \VB}{\inf} \quad \underset{\Phi^H \in V_H \times \VB}{\sup} \quad \frac{c_H \left(U^H,\Phi^H \right)}{\|U^H\|_{H^1(\Omega) \times H^1(\Omega)} \, \|\Phi^H\|_{H^1(\Omega) \times H^1(\Omega)}} \geq \alpha.
\end{equation}
We prove this statement by contradiction and therefore assume that~\eqref{eq:coer4} does not hold. Then, there exists a sequence $H_n$ that converges to 0 and a sequence $U^{H_n} = \left( u^{H_n}_{\rm even},u^{H_n}_{\rm odd} \right) \in V_{H_n} \times \VBn$ with $\| U^{H_n} \|_{H^1(\Omega) \times H^1(\Omega)} = 1$, such that
\begin{equation}
\label{eq:coer5}
\lim_{n \rightarrow +\infty} \ \ \underset{\Phi \in V_{H_n} \times \VBn}{\sup} \ \ \frac{c_{H_n}\left(U^{H_n},\Phi \right)}{\|\Phi\|_{H^1(\Omega) \times H^1(\Omega)}}=0.
\end{equation} 
As the sequence $U^{H_n}$ is bounded in $H^1(\Omega) \times H^1(\Omega)$, it weakly converges in $H^1_0(\Omega) \times H^1_0(\Omega)$ to some $U^\star=(u^\star_{\rm even},u^\star_{\rm odd}) \in H^1_0(\Omega) \times H^1_0(\Omega)$, up to the extraction of a subsequence that we still denote $U^{H_n}$. 

Using~\eqref{projection_VB_varf}, we also deduce from the boundedness of $u^{H_n}_{\rm even}$ that $P_{\VBn}(u^{H_n}_{\text{even}})$ is bounded in $H^1$ norm. Up to an additional extraction, we hence have that $P_{\VBn}(u^{H_n}_{\text{even}})$ weakly converges in $H^1(\Omega)$ to some $u^{\Pi}_{\text{even}}$. We claim that $u^{\Pi}_{\text{even}}=u^\star_{\text{even}}$. Let indeed $\phi \in H^1_0(\Omega)$. By density (see Appendix~\ref{sec:appB}), there exists a sequence $w_n \in \VBn$ converging strongly in $H^1_0(\Omega)$ to $\phi$. For any $n$, we have $\dis a_1(P_{\VBn}(u^{H_n}_{\text{even}}),w_n) = a_1(u^{H_n}_{\text{even}},w_n)$. Passing to the limit $n \to \infty$, we infer that $\dis a_1(u^{\Pi}_{\text{even}},\phi) = a_1(u^\star_{\text{even}},\phi)$, which holds true for any $\phi \in H^1_0(\Omega)$. This implies that $u^{\Pi}_{\text{even}}=u^\star_{\text{even}}$. Consequently, $P_{\VBn}(u^{H_n}_{\text{even}}) - u^{H_n}_{\text{even}}$ weakly converges in $H^1_0(\Omega)$ to 0.

\medskip

We first show that $U^\star=0$. We fix some $\Phi=(\phi,\psi) \in H^1_0(\Omega) \times H^1_0(\Omega)$. For any $\Phi^{H_n}=(\phi^{H_n},\psi^{H_n}) \in V_{H_n} \times \VBn$, we write
\begin{equation}
\label{eq:coer6}
c_{H_n}(U^{H_n},\Phi) = c_{H_n}(U^{H_n},\Phi^{H_n}) + c_{H_n}(U^{H_n},\Phi-\Phi^{H_n}).
\end{equation}
We have that
\begin{equation}
\label{ineq_infsup_7}
|c_{H_n}(U^{H_n},\Phi^{H_n})|
\leq
\left( \underset{\Psi \in V_{H_n} \times \VBn}{\sup} \ \frac{c_{H_n}(U^{H_n},\Psi)}{\|\Psi\|_{H^1(\Omega) \times H^1(\Omega)}}\right) \|\Phi^{H_n}\|_{H^1(\Omega) \times H^1(\Omega)}
\end{equation}
and, since $c_H$ is a continuous bilinear form,
\begin{equation}
\label{ineq_infsup_8}
|c_{H_n}(U^{H_n},\Phi-\Phi^{H_n})|
\leq
M \|U^{H_n}\|_{H^1(\Omega) \times H^1(\Omega)} \, \|\Phi -\Phi^{H_n}\|_{H^1(\Omega) \times H^1(\Omega)}.
\end{equation}
By an argument of density (see Appendix~\ref{sec:appB}), there exists a sequence $\psi^{H_n} \in \VBn$ converging strongly in $H^1_0(\Omega)$ to $\psi$. Likewise, by an argument of density and classical results on finite element methods, we know that there also exists a sequence $\phi^{H_n} \in V_{H_n}$ converging strongly in $H^1_0(\Omega)$ to $\phi$. We thus have built a sequence $\Phi^{H_n}=\left(\phi^{H_n},\psi^{H_n}\right) \in V_{H_n} \times \VBn$ such that $\dis \lim_{n \rightarrow +\infty} \| \Phi -\Phi^{H_n} \|_{H^1(\Omega) \times H^1(\Omega)} = 0$. We infer from~\eqref{eq:coer6}, \eqref{ineq_infsup_8}, \eqref{ineq_infsup_7} and~\eqref{eq:coer5} that
$$
\lim_{n \rightarrow +\infty} c_{H_n}(U^{H_n},\Phi) = 0.
$$
Making use of the explicit expression of $c_H$ and using that $U^{H_n}$ weakly converges in $H^1_0(\Omega) \times H^1_0(\Omega)$ to $U^\star=(u^\star_{\rm even},u^\star_{\rm odd})$ and that $P_{\VBn}(u^{H_n}_{\text{even}}) - u^{H_n}_{\text{even}}$ weakly converges in $H^1_0(\Omega)$ to 0, we obtain that
$$
a_1(u^\star_{\rm even},\phi) - \int_\Omega \beta \nabla u^{\star}_{\rm odd} \cdot \nabla \phi + \int_\Omega (b \cdot \nabla u^\star_{\rm odd}) \phi + a_2(u^\star_{\rm odd},\psi) - a_1(u^\star_{\rm even},\psi) = 0.
$$
This holds for any $(\phi,\psi) \in H^1_0(\Omega) \times H^1_0(\Omega)$. Taking $\phi = \psi$, we deduce that $u^\star_{\rm odd} = 0$. This next implies that $u^\star_{\rm even} = 0$, and hence that $U^\star=0$.

\medskip

Second, we show the {\em strong} convergence in $H^1_0(\Omega) \times H^1_0(\Omega)$ of the sequence $U^{H_n}$ to $U^\star=0$. Under assumption~\eqref{eq:assum_sauter}, we have shown in Step 2a above that $\widetilde{c}_H$ is coercive. In view of~\eqref{eq:coercivite}, we thus have
\begin{eqnarray*}
\eta \| U^{H_n} \|^2_{H^1(\Omega) \times H^1(\Omega)} 
&\leq& 
\widetilde{c}_{H_n}(U^{H_n},U^{H_n})
\\
&=& 
c_{H_n}(U^{H_n},U^{H_n}) + \lambda \int_\Omega \left( u^{H_n}_{\rm even} \right)^2 
\\
&\leq& 
\left( \underset{\Phi \in V_{H_n} \times \VBn}{\sup} \ \frac{c_{H_n}(U^{H_n},\Phi)}{\| \Phi \|_{H^1(\Omega) \times H^1(\Omega)}} \right) + \lambda \, \| u^{H_n}_{\rm even} \|^2_{L^2(\Omega)}.
\end{eqnarray*}
In view of~\eqref{eq:coer5}, the first term in the above right-hand side converges to 0 when $n \to \infty$. Up to the extraction of a subsequence, $u^{H_n}_{\rm even}$ (which weakly converges to 0 in $H^1(\Omega)$) strongly converges to 0 in $L^2(\Omega)$. This implies that the second term in the above right-hand side also converges to 0 when $n \to \infty$. 

We then deduce that $\dis \lim_{n \to \infty} \| U^{H_n} \|^2_{H^1(\Omega) \times H^1(\Omega)} = 0$, which is a contradiction with the fact that, by construction, $\|U^{H_n}\|_{H^1(\Omega) \times H^1(\Omega)}=1$. This concludes the proof of~\eqref{eq:coer4}.

\medskip

\noindent {\bf Step 2c: Conclusion.} 
We are now in position to use~\cite[Lemma 2.27]{ern2004theory}, which states an upper bound on the error (see~\eqref{eq:coer10} below) under three assumptions. Assumption (i) of that lemma is that the approximation spaces are conformal. This is obviously satisfied here, as $V_H \times \VB \subset H^1_0(\Omega) \times H^1_0(\Omega)$. Assumption (ii) is that $c_H$ satisfies an inf-sup condition. It is satisfied here in view of~\eqref{eq:coer4}. Assumption (iii) is that the bilinear form $c_H$ is bounded. This is again satisfied here. The assumptions of~\cite[Lemma 2.27]{ern2004theory} being satisfied, we can write an error bound (see~\eqref{eq:coer10} below) between the solution to~\eqref{eq:coer9} and the solution to the corresponding infinite dimensional problem, that reads
\begin{equation}
\label{eq:coer8}
\begin{array}{c}
\text{Find $(u_{\text{even}},u_{\text{odd}}) \in H^1_0(\Omega) \times H^1_0(\Omega)$ such that}, 
\\
\text{for any $(v,w) \in H^1_0(\Omega) \times H^1_0(\Omega)$, \ \ $c \Big( (u_{\text{even}},u_{\text{odd}}),(v,w) \Big)=B(v,w)$},
\end{array}
\end{equation}
where 
\begin{multline*}
c \Big( (u_{\text{even}},u_{\text{odd}}),(v,w) \Big) 
=
a_1(u_{\text{even}},v) 
- \int_\Omega \beta \nabla u_{\text{odd}} \cdot \nabla v + \int_\Omega (b \cdot \nabla u_{\text{odd}}) v
\\
+ a_2(u_{\text{odd}},w) - a_1(u_{\text{even}},w)
\end{multline*}
and 
$$
B(v,w) = \int_\Omega fv.
$$
It is obvious that $(u_{\text{even}},u_{\text{odd}})$ is a solution to~\eqref{eq:coer8} if and only if $(u_{\text{even}},u_{\text{odd}})$ is a solution to the system
\begin{align}
&\left\{\begin{aligned}
&-(\beta + \alpha_{\rm spl})\Delta u_{\rm even} = f - b\cdot\nabla u_{\rm odd}-\beta \Delta u_{\rm odd}\quad\text{ in }\Omega, \\
&u_{\rm even}=0 \quad\text{ on }\partial\Omega ,
\end{aligned}\right.
\label{pb_spl_beta_1_app}
\\
&\left\{\begin{aligned}
&-\text{div }((\beta \text{Id}+A^\eps)\nabla u_{\rm odd})=-(\beta+\alpha_{\rm spl})\Delta u_{\rm even}\quad\text{ in }\Omega,\\
&u_{\rm odd}=0\quad\text{ on }\partial\Omega.
\end{aligned}\right.
\nonumber
\end{align}
This system is well-posed: by adding the two equations, we obtain that $u_{\text{odd}}$ is a solution to~\eqref{pb_multiscale}, and is therefore unique. This implies the uniqueness of $u_{\text{even}}$ in view of~\eqref{pb_spl_beta_1_app}. We denote by $U = (u_{\text{even}},u_{\text{odd}})$ the unique solution to~\eqref{eq:coer8}.

\medskip

Using~\cite[Lemma 2.27]{ern2004theory}, we obtain that
\begin{multline}
\label{eq:coer10}
\| U - U^H \|_{H^1(\Omega) \times H^1(\Omega)} 
\leq 
\frac{1}{\alpha} \ \sup_{\Phi \in V_H \times \VB} \frac{\left| B(\Phi) - B_H(\Phi) \right|}{\| \Phi \|_{H^1(\Omega) \times H^1(\Omega)}}
\\
+
\inf_{G \in V_H \times \VB} \left[ \left( 1 + \frac{M}{\alpha} \right) \| U - G \|_{H^1(\Omega) \times H^1(\Omega)} + \frac{1}{\alpha} \ \sup_{\Phi \in V_H \times \VB} \frac{\left| c(G,\Phi) - c_H(G,\Phi) \right|}{\| \Phi \|_{H^1(\Omega) \times H^1(\Omega)}} \right],
\end{multline}
where $M$ is the continuity constant of the bilinear form $c$. We successively study the two terms in the right-hand side of~\eqref{eq:coer10}. 

For the first term, we write, for any $\Phi=(\phi,\psi) \in V_H \times \VB$, that
$$
\left| B(\Phi) - B_H(\Phi) \right| 
\leq
\frac{H}{2} \sum_{\mathbf{K} \in \mathcal{T}_H} \| f \|_{L^2(\mathbf{K})} \| \nabla \phi \|_{L^2(\mathbf{K})}
\leq
\frac{H}{2} \| f \|_{L^2(\Omega)} \| \Phi \|_{H^1(\Omega) \times H^1(\Omega)},
$$
which implies that
\begin{equation}
\label{eq:coer11}
\lim_{H \to 0} \ \sup_{\Phi \in V_H \times \VB} \frac{\left| B(\Phi) - B_H(\Phi) \right|}{\| \Phi \|_{H^1(\Omega) \times H^1(\Omega)}} = 0.
\end{equation}
For the second term of the right-hand side of~\eqref{eq:coer10}, we write, for any $\Phi=(\phi,\psi) \in V_H \times \VB$ and any $G=(g,h) \in V_H \times \VB$, that
$$
c_H(G,\Phi) - c(G,\Phi) 
= 
a_{\text{conv}}(g-P_{\VB}(g),\phi) + \sum_{\mathbf{K}\in\mathcal{T}_H} (\tau_\mathbf{K} b\cdot \nabla h,b\cdot\nabla \phi)_{L^2(\mathbf{K})}.
$$
We therefore deduce, using an integration by parts in the first line, that
\begin{eqnarray*}
&&\left| c_H(G,\Phi) - c(G,\Phi) \right|
\\
& \leq &
\left| \int_\Omega \left[ g-P_{\VB}(g) \right] b \cdot \nabla \phi \right|
\\
&&
+ \frac{H \| b \|_{L^\infty(\Omega)}}{2} \, | g-P_{\VB}(g) |_{H^1(\Omega)} | \phi |_{H^1(\Omega)}
+ \frac{H \| b \|_{L^\infty(\Omega)}}{2} \, | h |_{H^1(\Omega)} | \phi |_{H^1(\Omega)}
\\
& \leq &
\| b \|_{L^\infty(\Omega)} \| g - P_{\VB}(g) \|_{L^2(\Omega)} \| \Phi \|_{H^1(\Omega) \times H^1(\Omega)}
\\
&&
+ H \| b \|_{L^\infty(\Omega)} \, \left( | g |_{H^1(\Omega)} + | h |_{H^1(\Omega)} \right) \| \Phi \|_{H^1(\Omega) \times H^1(\Omega)}.
\end{eqnarray*}
We hence write, for the second term of the right-hand side of~\eqref{eq:coer10}, that
\begin{multline*}
\left( 1 + \frac{M}{\alpha} \right) \| U - G \|_{H^1(\Omega) \times H^1(\Omega)} + \frac{1}{\alpha} \ \sup_{\Phi \in V_H \times \VB} \frac{\left| c(G,\Phi) - c_H(G,\Phi) \right|}{\| \Phi \|_{H^1(\Omega) \times H^1(\Omega)}} 
\\
\leq
\mathcal{C} \| U - G \|_{H^1(\Omega) \times H^1(\Omega)}
+
\mathcal{C} \| g - P_{\VB}(g) \|_{L^2(\Omega)} 
+
\mathcal{C} H \| G \|_{H^1(\Omega) \times H^1(\Omega)}
\end{multline*}
where $\mathcal{C}$ is independent of $H$. Using the density of the families $V_H$ and $\VB$ in $H^1_0(\Omega)$ (see Appendix~\ref{sec:appB} for the latter property), we build $G^H=(g^H,h^H) \in V_H \times \VB$ such that $\dis \lim_{H \to 0} \| U - G^H \|_{H^1(\Omega) \times H^1(\Omega)} = 0$. We thus have that
\begin{multline*}
\inf_{G \in V_H \times \VB} \left[ \left( 1 + \frac{M}{\alpha} \right) \| U - G \|_{H^1(\Omega) \times H^1(\Omega)} + \frac{1}{\alpha} \ \sup_{\Phi \in V_H \times \VB} \frac{\left| c(G,\Phi) - c_H(G,\Phi) \right|}{\| \Phi \|_{H^1(\Omega) \times H^1(\Omega)}} \right]
\\
\leq
\mathcal{C} \| U - G^H \|_{H^1(\Omega) \times H^1(\Omega)}
+
\mathcal{C} \| g^H - P_{\VB}(g^H) \|_{L^2(\Omega)} 
+
\mathcal{C} H \| G^H \|_{H^1(\Omega) \times H^1(\Omega)}.
\end{multline*}
The above three terms converge to 0 when $H \to 0$ (for the second term, this is a consequence of the fact that, for any bounded sequence $\tau_H \in H^1_0(\Omega)$, we have that $\tau^H - P_{\VB}(\tau^H)$ weakly converges to 0 in $H^1(\Omega)$). Collecting this result with~\eqref{eq:coer10} and~\eqref{eq:coer11}, we deduce that $\dis \lim_{H \to 0} \| U - U^H \|_{H^1(\Omega) \times H^1(\Omega)} = 0$. This concludes the proof of Lemma~\ref{lem:iter_alternative}.


\bibliographystyle{plain}
\bibliography{biblioarticle}
\end{document}